\providecommand{\tabularnewline}{\\}
\numberwithin{equation}{section}
\numberwithin{figure}{section}
\theoremstyle{plain}
\newtheorem{thm}{\protect\theoremname}[section]
  \theoremstyle{definition}
  \newtheorem{defn}[thm]{\protect\definitionname}
  \theoremstyle{plain}
  \newtheorem{prop}[thm]{\protect\propositionname}
  \theoremstyle{plain}
  \newtheorem{cor}[thm]{\protect\corollaryname}
  \theoremstyle{plain}
  \newtheorem{lem}[thm]{\protect\lemmaname}
  \theoremstyle{remark}
  \newtheorem{rem}[thm]{\protect\remarkname}
  \providecommand{\corollaryname}{Corollary}
  \providecommand{\definitionname}{Definition}
  \providecommand{\lemmaname}{Lemma}
  \providecommand{\propositionname}{Proposition}
  \providecommand{\remarkname}{Remark}
\providecommand{\theoremname}{Theorem}
\begin{document}

\title{Crossed Products of Banach Algebras. II.}
\begin{abstract}
In earlier work a crossed product of a Banach algebra was constructed
from a Banach algebra dynamical system $(A,G,\alpha)$ and a class
$\mathcal{R}$ of continuous covariant representations, and its representations
were determined. In this paper the theory is developed further. We
consider the dependence of the crossed product on the class $\mathcal{R}$
and its essential uniqueness. Next we study generalized Beurling algebras:
weighted Bochner spaces of $A$-valued functions on $G$ with a continuous
multiplication. Though not Banach algebras in general, they are isomorphic
to a crossed product of a Banach algebra, and the earlier work therefore
predicts the structure of their representations. Classical results
for the usual Beurling (Banach) algebras of scalar valued functions
are then retrieved as special cases. We also show how, e.g., an anti-covariant
pair of anti-representations of $A$ and $G$ can be viewed as a covariant
pair for a related Banach algebra dynamical system, so that the earlier
work becomes applicable to classes of such other pairs. After including
material on the representations of the projective tensor product of
Banach algebras, we combine this idea with the results already obtained
and describe the two-sided modules over the generalized Beurling algebras,
where again specializing to the scalars gives a classical result.
\end{abstract}

\author{Marcel de Jeu}
\address{Marcel de Jeu, Mathematical Institute, Leiden University, P.O. Box
9512, 2300 RA Leiden, The Netherlands}
\email{mdejeu@math.leidenuniv.nl}

\author{Miek Messerschmidt}
\address{Miek Messerschmidt, Mathematical Institute, Leiden University, P.O.
Box 9512, 2300 RA Leiden, The Netherlands}
\email{mmesserschmidt@gmail.com}

\author{Marten Wortel}
\address{Marten Wortel, School of Mathematics, Statistics \& Actuarial Science, Cornwallis Building,
University of Kent, Canterbury, Kent CT2 7NF, United Kingdom}
\email{marten.wortel@gmail.com}

\keywords{Crossed product, Banach algebra dynamical system, representation
in a Banach space, covariant representation, generalized Beurling
algebra}

\subjclass[2010]{Primary 47L65; Secondary 22D12, 22D15, 43A20, 46H25, 46L55}

\maketitle

\def\three_hack{(3)}

\global\long\def\crossedprod{(A\rtimes_{\alpha}G)^{\mathcal{R}}}

\global\long\def\crossedprodA{\crossedprod}

\global\long\def\crossedprodB{(B\rtimes_{\beta}H)^{\mathcal{S}}}

\global\long\def\crossedprodtensor{(A\hat{\otimes}B\rtimes_{\alpha\otimes\beta}G\times H)^{\mathcal{R}\circlearrowleft\mathcal{S}}}

\global\long\def\leftcent{\mathcal{M}_{l}}

\hyphenation{Beur-ling}

\section{Introduction and overview}

This paper is an analytical continuation of \cite{2011arXiv1104.5151D}
where, motivated by the theory of crossed products of $C^{*}$-algebras
and its relevance for the theory of unitary group representations,
a start was made with the theory of crossed products of Banach algebras.
General Banach algebras lack the convenient rigidity of $C^{*}$-algebras
where, e.g., morphisms are automatically continuous and even contractive,
and this makes the task of developing the basics more laborious than
it is for crossed products of $C^{*}$-algebras. Apart from some first
applications, including the usual description of the non-degenerate
(involutive) representations of the crossed product associated with
a $C^{*}$-dynamical system (cf.\,\cite[Theorem 9.3]{2011arXiv1104.5151D}),
\cite{2011arXiv1104.5151D} is basically concerned with one theorem,
the General Correspondence Theorem \cite[Theorem 8.1]{2011arXiv1104.5151D},
most of which is formulated as Theorem \ref{thm:General-Correspondence-Theorem}
below. If $\mathcal{R}$ is a non-empty class of non-degenerate continuous
covariant representations of a Banach algebra dynamical system $(A,G,\alpha)$
-- all notions will be reviewed in Section \ref{sec:Preliminaries-and-recapitulation}
-- then Theorem \ref{thm:General-Correspondence-Theorem} gives a
bijection between the non-degenerate $\mathcal{R}$-continuous covariant
representations of $(A,G,\alpha)$ and the non-degenerate bounded
representations of the crossed product $\crossedprod$, provided that
$A$ has a bounded approximate left identity. In the current paper,
the basic theory is developed further and, in addition, a substantial
part is concerned with generalized Beurling algebras $L^{1}(G,A,\omega;\alpha)$
and their representations. These are weighted Banach spaces of (equivalence
classes) of $A$-valued functions that are also associative algebras
with a multiplication that is continuous in both variables, but they
are not Banach algebras in general, since the norm need not be submultiplicative.
If $A$ equals the scalars, they reduce to the ordinary Beurling algebras
$L^{1}(G,\omega)$ (which are true Banach algebras) for a not necessarily
abelian group $G$. We will describe the non-degenerate bounded representations
of generalized Beurling algebras as a consequence of the General Correspondence
Theorem, which is thus seen to be a common underlying principle for
(at least) both crossed products of $C^{*}$-algebras and generalized
Beurling algebras.

We will now briefly describe the contents of the paper.

In Section \ref{sec:Preliminaries-and-recapitulation} we review the
relevant definitions and results of \cite{2011arXiv1104.5151D}. In
Section \ref{sec:VaryingR} it is investigated how the crossed product
$\crossedprod$ depends on $\mathcal{R}$, and it is also shown that
there exists an isometric representation of this algebra on a Banach
space. The latter result is used in Section \ref{sec:Uniqueness-of-the-crossed-prod}.
Loosely speaking, $\crossedprod$ ``generates'' all non-degenerate
$\mathcal{R}$-continuous covariant representations of $(A,G,\alpha)$,
and under two mild additional hypotheses it is shown to be the unique
such algebra, up to isomorphism (cf.\,Theorem \ref{thm:universal-property}).
This result parallels work of Raeburn's \cite{RaeburnOriginalUniversalPaper}.
It is also shown (cf.\,Proposition \ref{prop:left-regular-is-topological-embedding})
that the left regular representation of $\crossedprod$ is a topological
embedding into its left centralizer algebra $\leftcent(\crossedprod)$.
Since $\crossedprod$ need not have a bounded approximate right identity,
this is not automatic. 

Next, in Section \ref{sec:Applications_L1_and_Beurling_algebras}
the generalized Beurling algebras $L^{1}(A,G,\omega;\alpha)$ make
their appearance. These algebras can be defined for any Banach algebra
dynamical system $(A,G,\alpha)$ and weight $\omega$ on $G$, provided
that $\alpha$ is uniformly bounded. If $A$ has a bounded approximate
right identity, then it can be shown that $L^{1}(A,G,\omega;\alpha)$
is isomorphic to $\crossedprod$, for a suitably chosen class $\mathcal{R}$
(cf.\,Theorem \ref{thm:Choosing-R-correctly-Crossed-Products-are-beurling}).
Via this isomorphism the General Correspondence Theorem therefore
predicts, if $A$ has a bounded two-sided approximate identity, what
the non-degenerate bounded representations of $L^{1}(A,G,\omega;\alpha)$
are, in terms of the non-degenerate continuous covariant representations
of $(A,G,\alpha)$ (cf.\,Theorem \ref{thm:continuous-non-deg-covars-are-R-continuous}),
and some classical results are thus seen to be obtainable from the
General Correspondence Theorem. As the easiest example, we retrieve
the usual description of the non-degenerate left $L^{1}(G)$-modules
in terms of the uniformly bounded strongly continuous representations
of $G$. Naturally, there is a similar description of the non-degenerate
right $L^{1}(G)$-modules, but an intermediate procedure is needed
to obtain such a result from the General Correspondence Theorem, where
one always ends up with left modules over the crossed product. This
is taken up in Section \ref{sec:Other-types}, where we investigate
all ``reasonable'' variations on the theme that $\pi:A\to B(X)$ and
$U:G\to B(X)$ should be multiplicative, and that $U_{r}\pi(a)U_{r}^{-1}=\pi(\alpha_{r}(a))$
should hold for all $a\in A$ and $r\in G$. We argue that there are
only three more ``reasonable'' requirements (cf.\,Table \ref{tab:table1}).
One of these is, e.g., that $\pi$ and $U$ are anti-multiplicative
and that $U_{r}\pi(a)U_{r}^{-1}=\pi(\alpha_{r^{-1}}(a))$ for all
$a\in A$ and $r\in G$; for $A=\mathbb{K}$ and $\alpha=\textup{triv}$
this covers the case of right $G$-modules. Moreover, we show that
a pair $(\pi,U)$ of each of the other three types can be reinterpreted
as a covariant representation in the usual sense for a suitable ``companion''
Banach algebra dynamical system. The example $(\pi,U)$ given above,
where there are three ``flaws'' in the properties of $(\pi,U)$, is
a covariant representation for the opposite Banach algebra dynamical
system $(A^{o},G^{o},\alpha^{o})$. Therefore, if one seeks a Banach
algebra of which the non-degenerate bounded (multiplicative) representations
``encode'' a family of such pairs $(\pi,U)$, then a crossed product
of type $\crossedprod$ is not what one should look at, but $(A^{o}\rtimes_{\alpha^{o}}G^{o})^{\mathcal{R}^{o}}$
is to be considered.

Section \ref{sec:Several-BADS} shows, as a particular case of Theorem
\ref{thm:sun-product-of-tensor-crossed-products}, how the encoding
for various types can be collected in one Banach algebra. For example,
the non-degenerate bounded representations of $\crossedprod\hat{\otimes}(A^{o}\rtimes_{\alpha^{o}}G^{o})^{\mathcal{R}^{o}}$
correspond to commuting non-degenerate bounded representations of
$\crossedprod$ and $(A^{o}\rtimes_{\alpha^{o}}G^{o})^{\mathcal{R}^{o}}$.
These representations can then be respectively related to a usual
covariant representation of $(A,G,\alpha)$ and a thrice ``flawed''
pair $(\pi,U)$ as above, which again commute.

In the final Section \ref{sec:Beurling-Right-and-bimodules} we combine
the results from Sections \ref{sec:Applications_L1_and_Beurling_algebras},
\ref{sec:Other-types} and \ref{sec:Several-BADS}. Using the procedure
from Section \ref{sec:Other-types} and the results from Section \ref{sec:Applications_L1_and_Beurling_algebras},
the relation between thrice ``flawed'' $(\pi,U)$ as above and the
non-degenerate bounded representations of $L^{1}(G^{o},A^{o},\omega^{o};\alpha^{o})$
is easily established. Since coincidentally $L^{1}(G^{o},A^{o},\omega^{o};\alpha^{o})$
turns out to be anti-isomorphic to $L^{1}(A,G,\omega;\alpha)$, these
pairs $(\pi,U)$ can then also be related to the non-degenerate right
$L^{1}(A,G,\omega;\alpha)$-modules (cf.\,Theorem \ref{thm:anti-correspondence}).
It is then easy to describe the simultaneous left $L^{1}(A,G,\omega;\alpha)$--
and right $L^{1}(B,H,\eta;\beta)$-modules, where the actions commute
(cf.\,Theorem \ref{thm:Beurling-bimodules}). In particular this
describes the bimodules over a generalized Beurling algebra $L^{1}(A,G,\omega;\alpha)$.
Specializing to the case where $A$ equals the scalars yields a description
of the non-degenerate bimodules over an ordinary Beurling algebra
$L^{1}(G,\omega)$ in terms of $G$-bimodules. Specializing still
further to $\omega=1$ the classical description of the non-degenerate
$L^{1}(G)$-bimodules in terms of a uniformly bounded $G$-bimodule
is retrieved as the simplest case in the general picture.

\section{\label{sec:Preliminaries-and-recapitulation}Preliminaries and recapitulation}

For the sake of self-containment we provide a brief recapitulation
of definitions and results from earlier papers \cite{2009arXiv0904.3268D,2011arXiv1104.5151D}. 

Throughout this paper $X$ and $Y$ will denote Banach spaces. The
algebra of bounded linear operators on $X$ will be denoted by $B(X)$.
By $A$ and $B$ we will denote Banach algebras, not necessarily unital,
and by $G$ and $H$ locally compact groups (which are always assumed
to be Hausdorff). We will always use the same symbol $\lambda$ to
denote the left regular representation of various Banach algebras
instead of distinguishing between them, as the context will always
make precise what is meant. If $A$ is a Banach algebra, $X$ a Banach
space, and $\pi:A\to B(X)$ is a Banach algebra representation, when
confusion could arise, we will write $X_{\pi}$ instead of $X$ to
make clear that the Banach space $X$ is related to the representation
$\pi$. We do not assume that Banach algebra representations of unital
Banach algebras are unital. Representations of algebras and groups
are always multiplicative (so that we are considering left modules),
unless explicitly stated otherwise.

Let $A$ be a Banach algebra, $G$ a locally compact Hausdorff group
and $\alpha:G\to\mbox{Aut}(A)$ a strongly continuous representation
of $G$ on $A$. Then the triple $(A,G,\alpha)$ is called a \emph{Banach
algebra dynamical system.} 

Let $(A,G,\alpha)$ be a Banach algebra dynamical system, $X$ a Banach
space with $\pi:A\to B(X)$ and $U:G\to B(X)$ representations of
the algebra $A$ and group $G$ on $X$ respectively. If $(\pi,U)$
satisfies 
\[
\pi(\alpha_{s}(a))=U_{s}\pi(a)U_{s}^{-1},
\]
for all $a\in A$ and $s\in G$, the pair $(\pi,U)$ is called a \emph{covariant
representation} of $(A,G,\alpha)$ on $X$. The pair $(\pi,U)$ is
said to be \emph{continuous} if $\pi$ is norm-bounded and $U$ is
strongly continuous. The pair $(\pi,U)$ is called \emph{non-degenerate}
if $\pi$ is non-degenerate (i.e., the span of $\pi(A)X$ lies dense
in $X$).

Integrals of compactly supported continuous Banach space valued functions
are, as in \cite{2011arXiv1104.5151D}, defined by duality, following
\cite[Section 3]{Rudin}. Let $C_{c}(G,A)$ denote the space of all
continuous compactly supported $A$-valued functions. For any $f,g\in C_{c}(G,A)$
and $s\in G$ defining the twisted convolution 
\[
[f*g](s):=\int_{G}f(r)\alpha_{r}(g(r^{-1}s))\, dr
\]
gives $C_{c}(G,A)$ the structure of an associative algebra, where
integration is with respect to a fixed left Haar measure on $G$. 

If $(\pi,U)$ is a continuous covariant representation of $(A,G,\alpha)$
on $X$, then, for $f\in C_{c}(G,A)$, we define $\pi\rtimes U(f)\in B(X)$,
as in \cite[Section 3]{2011arXiv1104.5151D}, by 

\[
\pi\rtimes U(f)x:=\int_{G}\pi(f(s))U_{s}x\, ds\quad(x\in X).
\]
The map $\pi\rtimes U:C_{c}(G,A)\to B(X)$ is a representation of
the algebra $C_{c}(G,A)$ on $X$, and is called the \emph{integrated
form }of $(\pi,U)$. 

Let $\mathcal{R}$ be a class of covariant representations of $(A,G,\alpha)$.
Then $\mathcal{R}$ is called a\emph{ uniformly bounded class of continuous
covariant representations} if there exist a constant $C\geq0$ and
function $\nu:G\to[0,\infty)$ which is bounded on compact sets, such
that, for any $(\pi,U)\in\mathcal{R}$, we have that $\|\pi\|\leq C$
and $\|U_{r}\|\leq\nu(r)$ for all $r\in G$. We will always tacitly
assume that such a class $\mathcal{R}$ is non-empty. With $\mathcal{R}$
as such, it follows that $\|\pi\rtimes U(f)\|\leq C\left(\sup_{r\in\mbox{supp}(f)}\nu(r)\right)\|f\|_{1}$
for all $(\pi,U)\in\mathcal{R}$ and $f\in C_{c}(G,A)$ \cite[Remark 3.3]{2011arXiv1104.5151D}. 

We define the algebra seminorm $\sigma^{\mathcal{R}}$ on $C_{c}(G,A)$
by
\[
\sigma^{\mathcal{R}}(f):=\sup_{(\pi,U)\in\mathcal{R}}\|\pi\rtimes U(f)\|\quad(f\in C_{c}(G,A)),
\]
and denote the completion of the quotient $C_{c}(G,A)/\ker\sigma^{\mathcal{R}}$
by $\crossedprod$, with $\|\cdot\|^{\mathcal{R}}$ denoting the norm
induced by $\sigma^{\mathcal{R}}$. The Banach algebra $\crossedprod$
is called the \emph{crossed product} corresponding to $(A,G,\alpha)$
and $\mathcal{R}$. The quotient homomorphism is denoted by $q^{\mathcal{R}}:C_{c}(G,A)\to\crossedprod$.

A covariant representation of $(A,G,\alpha)$ is called \emph{$\mathcal{R}$-continuous
}if it is continuous and its integrated form is bounded with respect
to the seminorm $\sigma^{\mathcal{R}}$. For any Banach space $X$
and linear map $T:C_{c}(G,A)\to X$, if $T$ is bounded with respect
to the $\sigma^{\mathcal{R}}$ seminorm, we will denote the canonically
induced linear map on $\crossedprod$ by $T^{\mathcal{R}}:\crossedprod\to X$,
as detailed in \cite[Section 3]{2011arXiv1104.5151D}.

If $A$ has a bounded approximate left (right) identity, then it can
be shown that $\crossedprod$ also has a bounded approximate left
(right) identity, with estimates for its bound, \cite[Theorem 4.4 and Corollary 4.6]{2011arXiv1104.5151D}. 

We will denote the left centralizer algebra of a Banach algebra $B$
by $\leftcent(B)$. Assuming $B$ has a bounded approximate left identity
$(u_{i})$, any non-degenerate bounded representation $T:B\to B(X)$
induces a non-degenerate bounded representation $\overline{T}:\leftcent(B)\to B(X)$,
by defining $\overline{T}(L):=\mbox{SOT-lim}_{i}T(Lu_{i})$ for all
$L\in\leftcent(B)$, so that the following diagram commutes (cf. \cite[Theorem 4.1]{2009arXiv0904.3268D}):

\[
\xymatrix{B\ar[r]^{T}\ar[dr]^{\lambda} & B(X)\\
 & \leftcent(B)\ar[u]_{\overline{T}}
}
\]
Moreover, $\overline{T}(L)T(a)=T(La)$ for all $a\in B$ and $L\in\leftcent(B)$.
We will often use this fact. 

With $(A,G,\alpha)$ a Banach algebra dynamical system and $\mathcal{R}$
a uniformly bounded class of continuous covariant representations,
we define the homomorphisms $i_{A}:A\to\mbox{End}(C_{c}(G,A))$ and
$i_{G}:G\to\mbox{End}(C_{c}(G,A))$ by 
\[
\begin{array}{ccl}
(i_{A}(a)f)(s) & := & af(s),\\
(i_{G}(r)f)(s) & := & \alpha_{r}(f(r^{-1}s)),
\end{array}
\]
for all $a\in A$, $f\in C_{c}(G,A)$ and $r,s\in G$. For each $a\in A$
and $r\in G$, the maps 
\[
i_{A}(a),i_{G}(r):(C_{c}(G,A),\sigma^{\mathcal{R}})\to(C_{c}(G,A),\sigma^{\mathcal{R}})
\]
are bounded \cite[Lemma 6.3]{2011arXiv1104.5151D}, and 
\[
\begin{array}{ccl}
\|i_{A}(a)\|^{\mathcal{R}} & \leq & \sup_{(\pi,U)\in\mathcal{R}}\|\pi(a)\|,\\
\|i_{G}(r)\|^{\mathcal{R}} & \leq & \sup_{(\pi,U)\in\mathcal{R}}\|U_{r}\|.
\end{array}
\]
Defining $i_{A}^{\mathcal{R}}(a)q^{\mathcal{R}}(f):=q^{\mathcal{R}}(i_{A}(a)f)$
and $i_{G}^{\mathcal{R}}(r)q^{\mathcal{R}}(f):=q^{\mathcal{R}}(i_{G}(r)f)$
for all $a\in A$, $r\in G$ and $\in C_{c}(G,A)$, we obtain bounded
maps 
\[
i_{A}^{\mathcal{R}}(a),i_{G}^{\mathcal{R}}(r):\crossedprod\to\crossedprod.
\]
Moreover, the maps $a\mapsto i_{A}^{\mathcal{R}}(a)$ and $r\mapsto i_{G}^{\mathcal{R}}(r)$
map $A$ and $G$ into $\leftcent(\crossedprod)$. If $A$ has a bounded
approximate left  identity and $\mathcal{R}$ is a uniformly bounded
class of non-degenerate continuous covariant representations, then
$(i_{A}^{\mathcal{R}},i_{G}^{\mathcal{R}})$ is a non-degenerate $\mathcal{R}$-continuous
covariant representation of $(A,G,\alpha)$ on $\crossedprod$ \cite[Section 6]{2011arXiv1104.5151D}
and the integrated form $(i_{A}^{\mathcal{R}}\rtimes i_{G}^{\mathcal{R}})^{\mathcal{R}}$
equals the left regular representation of $\crossedprod$ \cite[Theorem 7.2]{2011arXiv1104.5151D}. 

The main theorem from \cite{2011arXiv1104.5151D} establishes, amongst
others, a bijective relationship between the non-degenerate $\mathcal{R}$-continuous
covariant representations of $(A,G,\alpha)$ and the non-degenerate
bounded representations of $\crossedprod$, by letting $(\pi,U)$
correspond to $(\pi\rtimes U)^{\mathcal{R}}$. This result will play
a fundamental role throughout the rest of this paper, and the relevant
part of \cite[Theorem 8.1]{2011arXiv1104.5151D} can be stated as
follows:
\begin{thm}
\textup{(}General Correspondence Theorem, cf.\,\cite[Theorem 8.1]{2011arXiv1104.5151D}\textup{)}
\label{thm:General-Correspondence-Theorem}Let $(A,G,\alpha)$ be
a Banach algebra dynamical system, where $A$ has a bounded approximate
left identity. Let $\mathcal{R}$ be a uniformly bounded class of
non-degenerate continuous covariant representations of $(A,G,\alpha)$.
Then the map $(\pi,U)\mapsto(\pi\rtimes U)^{\mathcal{R}}$ is a bijection
between the non-degenerate $\mathcal{R}$-continuous covariant representations
of $(A,G,\alpha)$ and the non-degenerate bounded representations
of $\crossedprod$.

More precisely:
\begin{enumerate}
\item If $(\pi,U)$ is a non-degenerate $\mathcal{R}$-continuous covariant
representation of $(A,G,\alpha)$, then $(\pi\rtimes U)^{\mathcal{R}}$
is a non-degenerate bounded representation of $\crossedprod$, and
\[
(\overline{(\pi\rtimes U)^{\mathcal{R}}}\circ i_{A}^{\mathcal{R}},\overline{(\pi\rtimes U)^{\mathcal{R}}}\circ i_{G}^{\mathcal{R}})=(\pi,U),
\]
where $\overline{(\pi\rtimes U)^{\mathcal{R}}}$ is the representation
of \textup{$\leftcent(\crossedprod)$ }as described above, cf. \cite[Section 7]{2011arXiv1104.5151D}\textup{.} 
\item If $T$ is a non-degenerate bounded representation of $\crossedprod$,
then $(\overline{T}\circ i_{A}^{\mathcal{R}},\overline{T}\circ i_{G}^{\mathcal{R}})$
is a non-degenerate $\mathcal{R}$-continuous covariant representation
of $(A,G,\alpha)$, and
\[
(\overline{T}\circ i_{A}^{\mathcal{R}}\rtimes\overline{T}\circ i_{G}^{\mathcal{R}})^{\mathcal{R}}=T.
\]

\end{enumerate}
\end{thm}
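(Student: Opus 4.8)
The plan is to verify that the two maps
\[
(\pi,U)\mapsto(\pi\rtimes U)^{\mathcal{R}}
\qquad\text{and}\qquad
T\mapsto(\overline{T}\circ i_{A}^{\mathcal{R}},\overline{T}\circ i_{G}^{\mathcal{R}})
\]
are well defined between the two stated sets and are mutually inverse, which is exactly the content of parts (1) and (2). Throughout, the linchpin will be the identity $\overline{T}(L)T(b)=T(Lb)$ for $L\in\leftcent(\crossedprod)$ and $b\in\crossedprod$ recorded in the preliminaries, together with the fact from \cite[Theorem 7.2]{2011arXiv1104.5151D} that the integrated form $(i_{A}^{\mathcal{R}}\rtimes i_{G}^{\mathcal{R}})^{\mathcal{R}}$ is the left regular representation $\lambda$ of $\crossedprod$.

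For part (1), I would first note that $\mathcal{R}$-continuity of $(\pi,U)$ means precisely that $\pi\rtimes U$ is $\sigma^{\mathcal{R}}$-bounded, hence factors through $q^{\mathcal{R}}$ and extends to a bounded homomorphism $(\pi\rtimes U)^{\mathcal{R}}$ on $\crossedprod$; multiplicativity is inherited from $\pi\rtimes U$ on $C_{c}(G,A)$, and non-degeneracy follows from that of $(\pi,U)$ via a bounded approximate identity argument. To recover $(\pi,U)$, write $T:=(\pi\rtimes U)^{\mathcal{R}}$ and apply $\overline{T}(L)T(b)=T(Lb)$ with $L=i_{A}^{\mathcal{R}}(a)$ and $b=q^{\mathcal{R}}(f)$: since $i_{A}^{\mathcal{R}}(a)q^{\mathcal{R}}(f)=q^{\mathcal{R}}(i_{A}(a)f)$ and a direct computation gives $\pi\rtimes U(i_{A}(a)f)=\pi(a)\,\pi\rtimes U(f)$, one obtains $\overline{T}(i_{A}^{\mathcal{R}}(a))\,\pi\rtimes U(f)=\pi(a)\,\pi\rtimes U(f)$ for every $f$. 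The analogous computation for $i_{G}$, using covariance and the left invariance of Haar measure to substitute $t=r^{-1}s$, yields $\overline{T}(i_{G}^{\mathcal{R}}(r))\,\pi\rtimes U(f)=U_{r}\,\pi\rtimes U(f)$. Since the vectors $\pi\rtimes U(f)x$ span a dense subspace (non-degeneracy of $T$) and all operators are bounded, these identities force $\overline{T}\circ i_{A}^{\mathcal{R}}=\pi$ and $\overline{T}\circ i_{G}^{\mathcal{R}}=U$.

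For part (2), set $\pi:=\overline{T}\circ i_{A}^{\mathcal{R}}$ and $U:=\overline{T}\circ i_{G}^{\mathcal{R}}$. That these are representations of $A$ and $G$, that they are covariant, and that $\pi$ is bounded while $U$ is strongly continuous, all follow by composing the bounded homomorphism $\overline{T}$ of $\leftcent(\crossedprod)$ with the corresponding properties of the non-degenerate $\mathcal{R}$-continuous covariant pair $(i_{A}^{\mathcal{R}},i_{G}^{\mathcal{R}})$ on $\crossedprod$ from \cite[Section 6]{2011arXiv1104.5151D}; non-degeneracy of $\pi$ descends likewise from that of $i_{A}^{\mathcal{R}}$ and of $T$. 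The crucial step is $(\pi\rtimes U)^{\mathcal{R}}=T$, which simultaneously delivers $\mathcal{R}$-continuity. For $f\in C_{c}(G,A)$ and $b\in\crossedprod$ I would use multiplicativity of $\overline{T}$ to write $\overline{T}(i_{A}^{\mathcal{R}}(f(s)))\,\overline{T}(i_{G}^{\mathcal{R}}(s))=\overline{T}\bigl(i_{A}^{\mathcal{R}}(f(s))\,i_{G}^{\mathcal{R}}(s)\bigr)$, apply $\overline{T}(L)T(b)=T(Lb)$, pull $T$ out of the integral by continuity, and recognise the remaining integral $\int_{G}i_{A}^{\mathcal{R}}(f(s))\,i_{G}^{\mathcal{R}}(s)b\,ds$ as $(i_{A}^{\mathcal{R}}\rtimes i_{G}^{\mathcal{R}})(f)b=\lambda(q^{\mathcal{R}}(f))b=q^{\mathcal{R}}(f)\,b$. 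This gives $\pi\rtimes U(f)\,T(b)=T(q^{\mathcal{R}}(f))T(b)=T(q^{\mathcal{R}}(f)\,b)$, and once more density of the vectors $T(b)x$ together with $\|T(q^{\mathcal{R}}(f))\|\leq\|T\|\,\sigma^{\mathcal{R}}(f)$ shows that $\pi\rtimes U$ is $\sigma^{\mathcal{R}}$-bounded and that its induced map equals $T$.

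The step I expect to be the main obstacle is justifying the interchange of $T$ with the operator-valued integral in part (2), and more generally the passage from the centralizer algebra back to $\crossedprod$: one must check that the $B(X)$-valued integrand $s\mapsto\overline{T}\bigl(i_{A}^{\mathcal{R}}(f(s))\,i_{G}^{\mathcal{R}}(s)\bigr)T(b)$ is genuinely the image under the bounded linear map $T$ of the $\crossedprod$-valued integrand $s\mapsto i_{A}^{\mathcal{R}}(f(s))\,i_{G}^{\mathcal{R}}(s)\,b$, so that continuity and linearity of $T$ (and the definition of the integral by duality) permit moving $T$ through the integral. Establishing the strong continuity of $U=\overline{T}\circ i_{G}^{\mathcal{R}}$ and the two non-degeneracy claims is comparatively routine once the corresponding facts for $(i_{A}^{\mathcal{R}},i_{G}^{\mathcal{R}})$ are in hand, but each still rests on a careful approximate-identity argument rather than a purely formal manipulation.
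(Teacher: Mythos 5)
Your outline is correct and takes essentially the approach of the source: the paper states this theorem without proof, importing it from \cite[Theorem 8.1]{2011arXiv1104.5151D}, and the mechanism you describe --- the identity $\overline{T}(L)T(b)=T(Lb)$, the fact that $(i_{A}^{\mathcal{R}}\rtimes i_{G}^{\mathcal{R}})^{\mathcal{R}}=\lambda$, pulling $T$ through the operator-valued integral, and approximate-identity arguments for non-degeneracy and strong continuity of $\overline{T}\circ i_{G}^{\mathcal{R}}$ --- is exactly the machinery the present paper redeploys in Lemma \ref{lem:existence_of_generating_pair} to establish the generating direction in greater generality. The step you flag as the main obstacle (interchanging $T$ with the integral) is handled there precisely as you propose: the integrand is a compactly supported continuous function with values in a closed subspace of $\leftcent(\crossedprod)$, so the boundedness of $T$ and the duality definition of the integral justify the interchange.
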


\section{Varying $\mathcal{R}$\label{sec:VaryingR}}

For a given Banach algebra dynamical system $(A,G,\alpha)$, one may
ask what relationship exists between the crossed products $(A\rtimes_{\alpha}G)^{\mathcal{R}_{1}}$
and $(A\rtimes_{\alpha}G)^{\mathcal{R}_{2}}$ for two uniformly bounded
classes $\mathcal{R}_{1}$ and $\mathcal{R}_{2}$ of possibly degenerate
continuous covariant representations on Banach spaces. This section
investigates this question.

Since uniformly bounded classes of covariant representations might
be proper classes, we must take some care in working with them. Nevertheless,
we can always choose a set from a uniformly bounded class $\mathcal{R}$
of covariant representations of a Banach algebra dynamical system
$(A,G,\alpha)$ so that this set determines $\sigma^{\mathcal{R}}$.
Indeed for every $f\in C_{c}(G,A)$, looking at the subset $\{\|\pi\rtimes U(f)\|:(\pi,U)\in\mathcal{R}\}$
of $\mathbb{R}$ (subclasses of sets are sets), we may choose a sequence
from $\{\|\pi\rtimes U(f)\|:(\pi,U)\in\mathcal{R}\}$ converging to
$\sigma^{\mathcal{R}}(f)$ and regard only those corresponding covariant
representations. In this way, we can chose a set $S$ from $\mathcal{R}$
of cardinality at most $|C_{c}(G,A)\times\mathbb{N}|$ such that $\sigma^{S}(f)=\sup_{(\pi,U)\in S}\|\pi\rtimes U(f)\|=\sigma^{\mathcal{R}}(f)$
for all $f\in C_{c}(G,A)$. Hence the following definition is meaningful;
it will be required in Definition \ref{def:lp-direct-sum-realization}
and Proposition \ref{thm:isometric_realization}.
\begin{defn}
\label{def:equivalent_uniformly_bounded_sets}Let $\mathcal{R}$ be
a uniformly bounded class of possibly degenerate continuous covariant
representations of $(A,G,\alpha)$. We define $[\mathcal{R}]$ to
be the collection of all uniformly bounded classes $\mathcal{S}$
that are actually sets and satisfy $\sigma^{\mathcal{R}}=\sigma^{\mathcal{S}}$
on $C_{c}(G,A)$. Elements of some $[\mathcal{R}]$ will be called
\emph{uniformly bounded sets of continuous covariant representations.}
\end{defn}
Before addressing the question laid out in the first paragraph, we
consider the following aside which will play a key role in Section
\ref{sec:Uniqueness-of-the-crossed-prod}.
\begin{defn}
Let $I$ be an index set and $\{X_{i}:i\in I\}$ a family of Banach
spaces. For $1\leq p\leq\infty$, we will denote the $\ell^{p}$-direct
sum of $\{X_{i}:i\in I\}$ by $\ell^{p}\{X_{i}:i\in I\}$.
\end{defn}
\
\begin{defn}
\label{def:lp-direct-sum-realization}Let $(A,G,\alpha)$ be a Banach
algebra dynamical system and $\mathcal{R}$ a uniformly bounded class
of continuous covariant representations. For $S\in[\mathcal{R}]$
and $1\leq p<\infty$, suppressing the $p$-dependence in the notation,
we define the representations $(\oplus_{S}\pi):A\to B(\ell^{p}\{X_{\pi}:(\pi,U)\in S\})$
and $(\oplus_{S}U):G\to B(\ell^{p}\{X_{\pi}:(\pi,U)\in S\})$ by $(\oplus_{S}\pi)(a):=\bigoplus_{(\pi,U)\in S}\pi(a)$
and $(\oplus_{S}U)_{r}:=\bigoplus_{(\pi,U)\in S}U_{r}$ for all $a\in A$
and $r\in G$ respectively. 
\end{defn}
It is easily seen that $((\oplus_{S}\pi),(\oplus_{S}U))$ is a continuous
covariant representation, that 
\[
((\oplus_{S}\pi)\rtimes(\oplus_{S}U))(f)=\bigoplus_{(\pi,U)\in S}\pi\rtimes U(f),
\]
and that $\|((\oplus_{S}\pi)\rtimes(\oplus_{S}U))(f)\|=\sigma^{S}(f)=\sigma^{\mathcal{R}}(f)$,
for all $f\in C_{c}(G,A)$. 

We hence obtain the following (where the statement concerning non-degeneracy
is an elementary verification).
\begin{prop}
\label{thm:isometric_realization}Let $(A,G,\alpha)$ be a Banach
algebra dynamical system and $\mathcal{R}$ a uniformly bounded class
of continuous covariant representations. For any $S\in[\mathcal{R}]$
and $1\leq p<\infty,$ there exists an $\mathcal{R}$-continuous covariant
representation of $(A,G,\alpha)$ on $\ell^{p}\{X_{\pi}:(\pi,U)\in S\}$,
denoted $((\oplus_{S}\pi),(\oplus_{S}U))$, such that its integrated
form satisfies $\|((\oplus_{S}\pi)\rtimes(\oplus_{S}U))(f)\|=\sigma^{\mathcal{R}}(f)$
for all $f\in C_{c}(G,A)$ and hence induces an isometric representation
of $\crossedprod$ on $\ell^{p}\{X_{\pi}:(\pi,U)\in S\}$.

If every element of $S$ is non-degenerate, then $((\oplus_{S}\pi),(\oplus_{S}U))$
is non-degenerate.
\end{prop}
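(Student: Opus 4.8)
The plan is to assemble the proposition from the observations recorded immediately before its statement, so that only two points require genuine attention: the claim that the induced map on $\crossedprod$ is isometric, and the non-degeneracy assertion. First I would record that, by the preceding discussion, $((\oplus_{S}\pi),(\oplus_{S}U))$ is a continuous covariant representation whose integrated form satisfies $\|((\oplus_{S}\pi)\rtimes(\oplus_{S}U))(f)\|=\sigma^{\mathcal{R}}(f)$ for every $f\in C_{c}(G,A)$. In particular its integrated form is bounded with respect to $\sigma^{\mathcal{R}}$, so by definition $((\oplus_{S}\pi),(\oplus_{S}U))$ is $\mathcal{R}$-continuous. This entitles us to form the induced map $((\oplus_{S}\pi)\rtimes(\oplus_{S}U))^{\mathcal{R}}\colon\crossedprod\to B(\ell^{p}\{X_{\pi}:(\pi,U)\in S\})$.

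For the isometry, I would compute on the dense subalgebra $q^{\mathcal{R}}(C_{c}(G,A))$. Since the norm $\|\cdot\|^{\mathcal{R}}$ on $\crossedprod$ is, by construction, the one induced by $\sigma^{\mathcal{R}}$, we have $\|q^{\mathcal{R}}(f)\|^{\mathcal{R}}=\sigma^{\mathcal{R}}(f)$, while the induced map sends $q^{\mathcal{R}}(f)$ to $((\oplus_{S}\pi)\rtimes(\oplus_{S}U))(f)$, of norm $\sigma^{\mathcal{R}}(f)$. Thus the induced map preserves norms on a dense subalgebra and, being continuous, is isometric on all of $\crossedprod$.

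Finally, the non-degeneracy. Assuming each $(\pi,U)\in S$ is non-degenerate, I would show $\overline{\mbox{span}}\,(\oplus_{S}\pi)(A)\,\ell^{p}\{X_{\pi}:(\pi,U)\in S\}=\ell^{p}\{X_{\pi}:(\pi,U)\in S\}$. Since vectors with finitely many nonzero coordinates are dense, it suffices to approximate such a vector $x=(x_{\pi})$. For each coordinate $\pi$ in its (finite) support, non-degeneracy of $\pi$ lets us approximate $x_{\pi}$ by a finite sum $\sum_{k}\pi(a_{k})y_{k}$; promoting each $y_{k}$ to the element of $\ell^{p}\{X_{\pi}:(\pi,U)\in S\}$ supported in the single coordinate $\pi$ and applying $(\oplus_{S}\pi)(a_{k})$ reproduces this sum in that coordinate and zero elsewhere. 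Summing over the finitely many coordinates in the support yields an element of $\mbox{span}\,(\oplus_{S}\pi)(A)\,\ell^{p}\{X_{\pi}:(\pi,U)\in S\}$; because the contributions land in disjoint coordinates, the $\ell^{p}$-distance to $x$ is controlled coordinatewise and can be made arbitrarily small. I expect this coordinatewise bookkeeping to be the only genuinely ``hands-on'' step, though it is entirely routine; the substance of the proposition is carried by the norm identity already established before the statement.
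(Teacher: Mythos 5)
Your proposal is correct and follows exactly the route the paper intends: the paper gives no separate proof, deferring entirely to the observations recorded just before the statement and calling the non-degeneracy claim "an elementary verification." Your isometry argument (norm preservation on the dense image of $q^{\mathcal{R}}$) and your coordinatewise approximation of finitely supported vectors are precisely the details being left to the reader, and both are carried out correctly.
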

The previous theorem shows, in particular, that crossed products can
always be realized isometrically as closed subalgebras of bounded
operators on some (rather large) Banach space. 

We now return to the original question. The following results examine
relations that may exist between crossed products defined by using
two different uniformly bounded classes of continuous covariant representations
of a Banach algebra dynamical system.
\begin{prop}
\label{thm:R-relations}Let $(A,G,\alpha)$ be a Banach algebra dynamical
system. Let $\mathcal{R}_{1}$ and $\mathcal{R}_{2}$ be uniformly
bounded classes of possibly degenerate continuous covariant representations
of $(A,G,\alpha)$ and $M\geq1$ a constant. Then the following are
equivalent:
\begin{enumerate}
\item There exists a homomorphism $h:(A\rtimes_{\alpha}G)^{\mathcal{R}_{2}}\to(A\rtimes_{\alpha}G)^{\mathcal{R}_{1}}$
such that $\|h\|\leq M$ and $h\circ q^{\mathcal{R}_{2}}(f)=q^{\mathcal{R}_{1}}(f)$
for all $f\in C_{c}(G,A)$.
\item The seminorms $\sigma^{\mathcal{R}_{1}}$ and $\sigma^{\mathcal{R}_{2}}$
satisfy $\sigma^{\mathcal{R}_{1}}(f)\leq M\sigma^{\mathcal{R}_{2}}(f)$
for all $f\in C_{c}(G,A)$.
\item There exist uniformly bounded sets of continuous covariant representations
$\mathcal{R}_{1}'\in[\mathcal{R}_{1}]$, $\mathcal{R}_{2}'\in[\mathcal{R}_{2}]$
and $\mathcal{R}_{3}'$ such that $\mathcal{R}_{1}'\cup\mathcal{R}_{2}'\subseteq\mathcal{R}_{3}'$
and $\sigma^{\mathcal{R}_{2}'}(f)\leq\sigma^{\mathcal{R}_{3}'}(f)\leq M\sigma^{\mathcal{R}_{2}'}(f)$
for all $f\in C_{c}(G,A)$.
\item If $(\pi,U)$ is an $\mathcal{R}_{1}$-continuous covariant representation
of $(A,G,\alpha)$ and $M'\geq0$ is such that $\|\pi\rtimes U(f)\|\leq M'\sigma^{\mathcal{R}_{1}}(f)$
for all $f\in C_{c}(G,A)$, then $(\pi,U)$ is an $\mathcal{R}_{2}$-continuous
covariant representation of $(A,G,\alpha)$, and $\|\pi\rtimes U(f)\|\leq M'M\sigma^{\mathcal{R}_{2}}(f)$
for all $f\in C_{c}(G,A)$.
\item For any bounded representation $T:(A\rtimes_{\alpha}G)^{\mathcal{R}_{1}}\to B(X)$
there exists a bounded representation $S:(A\rtimes_{\alpha}G)^{\mathcal{R}_{2}}\to B(X)$
such that $T\circ q^{\mathcal{R}_{1}}(f)=S\circ q^{\mathcal{R}_{2}}(f)$
for all $f\in C_{c}(G,A)$ and $\|S\|\leq M\|T\|$. 
\end{enumerate}
\end{prop}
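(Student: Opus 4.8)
The plan is to treat statement (2), the seminorm estimate $\sigma^{\mathcal{R}_1}(f)\leq M\sigma^{\mathcal{R}_2}(f)$, as the hub and to prove each of the other four statements equivalent to it. This is the natural organization because each $q^{\mathcal{R}_i}$ maps $C_c(G,A)$ onto a dense subalgebra of $(A\rtimes_\alpha G)^{\mathcal{R}_i}$ with $\|q^{\mathcal{R}_i}(f)\|^{\mathcal{R}_i}=\sigma^{\mathcal{R}_i}(f)$, so every statement should ultimately be readable off from how the two seminorms compare on $C_c(G,A)$.

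For $(1)\Leftrightarrow(2)$ I would argue as follows. Assuming (1), applying $h$ to $q^{\mathcal{R}_2}(f)$ and using $h\circ q^{\mathcal{R}_2}(f)=q^{\mathcal{R}_1}(f)$ gives $\sigma^{\mathcal{R}_1}(f)=\|q^{\mathcal{R}_1}(f)\|^{\mathcal{R}_1}\leq\|h\|\,\|q^{\mathcal{R}_2}(f)\|^{\mathcal{R}_2}\leq M\sigma^{\mathcal{R}_2}(f)$. Conversely, (2) forces $\ker\sigma^{\mathcal{R}_2}\subseteq\ker\sigma^{\mathcal{R}_1}$, so $q^{\mathcal{R}_2}(f)\mapsto q^{\mathcal{R}_1}(f)$ is a well-defined algebra homomorphism on the dense subalgebra $q^{\mathcal{R}_2}(C_c(G,A))$ that is bounded by $M$; its continuous extension is the required $h$, and multiplicativity survives the extension since multiplication is continuous. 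For $(2)\Leftrightarrow(3)$ I would pick sets $\mathcal{R}_1'\in[\mathcal{R}_1]$ and $\mathcal{R}_2'\in[\mathcal{R}_2]$ (available by the remarks preceding Definition \ref{def:equivalent_uniformly_bounded_sets}) and set $\mathcal{R}_3':=\mathcal{R}_1'\cup\mathcal{R}_2'$, which is again a uniformly bounded set with $\sigma^{\mathcal{R}_3'}=\max(\sigma^{\mathcal{R}_1},\sigma^{\mathcal{R}_2})$; since $M\geq1$, both inequalities in (3) then drop out of (2), and conversely $\mathcal{R}_1'\subseteq\mathcal{R}_3'$ yields $\sigma^{\mathcal{R}_1}=\sigma^{\mathcal{R}_1'}\leq\sigma^{\mathcal{R}_3'}\leq M\sigma^{\mathcal{R}_2'}=M\sigma^{\mathcal{R}_2}$.

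The two representation-theoretic equivalences $(2)\Leftrightarrow(4)$ and $(2)\Leftrightarrow(5)$ have easy forward directions: for $(2)\Rightarrow(4)$ I would simply chain $\|\pi\rtimes U(f)\|\leq M'\sigma^{\mathcal{R}_1}(f)\leq M'M\sigma^{\mathcal{R}_2}(f)$, and for $(2)\Rightarrow(5)$ I would take the homomorphism $h$ supplied by (1) and set $S:=T\circ h$, a bounded representation with $\|S\|\leq M\|T\|$ and $S\circ q^{\mathcal{R}_2}(f)=T\circ q^{\mathcal{R}_1}(f)$. The step I expect to be the genuine obstacle is the reverse passage, from a purely representation-theoretic hypothesis back to the seminorm estimate, and this is exactly where Proposition \ref{thm:isometric_realization} is indispensable, since it produces a representation whose integrated form computes $\sigma^{\mathcal{R}_1}$ exactly. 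Concretely, for $(4)\Rightarrow(2)$ I would fix $S_1\in[\mathcal{R}_1]$ and feed the isometric $\mathcal{R}_1$-continuous covariant representation $((\oplus_{S_1}\pi),(\oplus_{S_1}U))$ into (4) with $M'=1$; as its integrated form has norm $\sigma^{\mathcal{R}_1}(f)$, the conclusion of (4) reads $\sigma^{\mathcal{R}_1}(f)\leq M\sigma^{\mathcal{R}_2}(f)$. For $(5)\Rightarrow(2)$ I would apply (5) to the induced isometric representation $T_0$ of $(A\rtimes_\alpha G)^{\mathcal{R}_1}$ from Proposition \ref{thm:isometric_realization}, for which $\|T_0\|\leq1$ and $\|T_0(q^{\mathcal{R}_1}(f))\|=\sigma^{\mathcal{R}_1}(f)$; the resulting $S$ then satisfies $\sigma^{\mathcal{R}_1}(f)=\|S(q^{\mathcal{R}_2}(f))\|\leq\|S\|\,\sigma^{\mathcal{R}_2}(f)\leq M\sigma^{\mathcal{R}_2}(f)$. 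The only residual care needed is the degenerate case $(A\rtimes_\alpha G)^{\mathcal{R}_1}=0$, where $\sigma^{\mathcal{R}_1}\equiv0$ and (2) holds vacuously.
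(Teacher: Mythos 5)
Your proposal is correct, but it is organized differently from the paper's proof and leans on a different tool at the one genuinely non-trivial step. The paper proves the cycle $(1)\Rightarrow(5)\Rightarrow(4)\Rightarrow(2)\Rightarrow(1)$ together with $(2)\Leftrightarrow(3)$, so the representation-theoretic descent has to be done only once, in $(5)\Rightarrow(4)$, where one feeds the bounded representation $(\pi\rtimes U)^{\mathcal{R}_{1}}$ into the hypothesis of $(5)$; the remaining step $(4)\Rightarrow(2)$ is then completely elementary, obtained by applying $(4)$ to each $(\pi,U)\in\mathcal{R}_{1}$ with $M'=1$ and taking the supremum over $\mathcal{R}_{1}$, with no need for Proposition \ref{thm:isometric_realization}. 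You instead use $(2)$ as a hub and invoke the $\ell^{p}$-direct-sum isometric realization of Proposition \ref{thm:isometric_realization} twice, once for $(4)\Rightarrow(2)$ and once for $(5)\Rightarrow(2)$; this works because that single covariant representation computes $\sigma^{\mathcal{R}_{1}}$ exactly, and it makes each equivalence with $(2)$ self-contained, at the cost of importing the direct-sum machinery into a proposition where the paper manages without it (the paper reserves that proposition for Section \ref{sec:Uniqueness-of-the-crossed-prod}). Your treatments of $(1)\Leftrightarrow(2)$, $(2)\Leftrightarrow(3)$, $(2)\Rightarrow(4)$ and $(2)\Rightarrow(5)$ coincide in substance with the paper's, and your remark about the degenerate case $\sigma^{\mathcal{R}_{1}}\equiv0$ is harmless since everything there is vacuous.
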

\begin{proof}
We prove that (1) implies (5). Let $T:(A\rtimes_{\alpha}G)^{\mathcal{R}_{1}}\to B(X)$
be a bounded representation. Then $S:=T\circ h:(A\rtimes_{\alpha}G)^{\mathcal{R}_{2}}\to B(X)$
satisfies $T\circ q^{\mathcal{R}_{1}}(f)=T\circ h\circ q^{\mathcal{R}_{2}}(f)=S\circ q^{\mathcal{R}_{2}}(f)$
for all $f\in C_{c}(G,A)$, and $\|S\|\leq\|T\|\|h\|\leq M\|T\|$.

We prove that (5) implies (4). Let $(\pi,U)$ be $\mathcal{R}_{1}$-continuous
and $M'\geq0$ be such that $\|\pi\rtimes U(f)\|\leq M'\sigma^{\mathcal{R}_{1}}(f)$
for all $f\in C_{c}(G,A)$. Then, for the bounded representation $(\pi\rtimes U)^{\mathcal{R}_{1}}:(A\rtimes_{\alpha}G)^{\mathcal{R}_{1}}\to B(X_{\pi})$,
there exists a bounded representation $S:(A\rtimes_{\alpha}G)^{\mathcal{R}_{2}}\to B(X_{\pi})$
such that 
\[
\pi\rtimes U(f)=(\pi\rtimes U)^{\mathcal{R}_{1}}\circ q^{\mathcal{R}_{1}}(f)=S\circ q^{\mathcal{R}_{2}}(f)
\]
for all $f\in C_{c}(G,A)$, and $\|S\|\leq M\|(\pi\rtimes U)^{\mathcal{R}_{1}}\|\leq MM'.$
Hence, $(\pi,U)$ is $\mathcal{R}_{2}$-continuous, and $\|\pi\rtimes U(f)\|=\|S\circ q^{\mathcal{R}_{2}}(f)\|\leq MM'\sigma^{\mathcal{R}_{2}}(f)$
holds for all $f\in C_{c}(G,A)$.

We prove that (4) implies (2). Every $(\pi,U)\in\mathcal{R}_{1}$
is $\mathcal{R}_{1}$-continuous and satisfies $\|\pi\rtimes U(f)\|\leq\sigma^{\mathcal{R}_{1}}(f)$
for all $f\in C_{c}(G,A)$. Then, by hypothesis, $(\pi,U)$ is $\mathcal{R}_{2}$-continuous
and 
\[
\|\pi\rtimes U(f)\|\leq M\sigma^{\mathcal{R}_{2}}(f)
\]
 for all $f\in C_{c}(G,A)$. Taking the supremum over all $(\pi,U)\in\mathcal{R}_{1}$,
we obtain $\sigma^{\mathcal{R}_{1}}(f)\leq M\sigma^{\mathcal{R}_{2}}(f)$
for all $f\in C_{c}(G,A)$.

We prove that (2) implies (1). Since $\ker\sigma^{\mathcal{R}_{2}}\subseteq\ker\sigma^{\mathcal{R}_{1}}$,
a homomorphism 
\[
h:C_{c}(G,A)/\ker\sigma^{\mathcal{R}_{2}}\to C_{c}(G,A)/\ker\sigma^{\mathcal{R}_{1}}
\]
 can be defined by $h(q^{\mathcal{R}_{2}}(f)):=q^{\mathcal{R}_{1}}(f)$
for all $f\in C_{c}(G,A)$, and then satisfies $\|h\|\leq M$. The
map $h$ therefore extends to a homomorphism $h:(A\rtimes_{\alpha}G)^{\mathcal{R}_{2}}\to(A\rtimes_{\alpha}G)^{\mathcal{R}_{1}}$
with the same norm.

We prove that (2) implies (3). Let $\mathcal{R}_{1}'\in[\mathcal{R}_{1}]$
and $\mathcal{R}_{2}'\in[\mathcal{R}_{2}]$ and define $\mathcal{R}_{3}':=\mathcal{R}_{1}'\cup\mathcal{R}_{2}'$.
By construction we have that $\sigma^{\mathcal{R}_{2}'}(f)\leq\sigma^{\mathcal{R}_{3}'}(f)$
for all $f\in C_{c}(G,A)$. By hypothesis we have that $\sigma^{\mathcal{R}_{1}'}(f)\leq M\sigma^{\mathcal{R}_{2}'}(f)$
for all $f\in C_{c}(G,A)$, as well as $M\geq1$. Therefore, 
\[
\sigma^{\mathcal{R}_{2}'}(f)\leq\sigma^{\mathcal{R}_{3}'}(f)=\max\{\sigma^{\mathcal{R}_{1}'}(f),\sigma^{\mathcal{R}_{2}'}(f)\}\leq\max\{M\sigma^{\mathcal{R}_{2}'}(f),\sigma^{\mathcal{R}_{2}'}(f)\}=M\sigma^{\mathcal{R}_{2}'}(f).
\]

We prove that (3) implies (2). Let $\mathcal{R}_{1}'\in[\mathcal{R}_{1}]$,
$\mathcal{R}_{2}'\in[\mathcal{R}_{2}]$ and $\mathcal{R}_{3}'$ be
such that $\mathcal{R}_{1}'\cup\mathcal{R}_{2}'\subseteq\mathcal{R}_{3}'$
and $\sigma^{\mathcal{R}_{2}'}(f)\leq\sigma^{\mathcal{R}_{3}'}(f)\leq M\sigma^{\mathcal{R}_{2}'}(f)$
for all $f\in C_{c}(G,A)$. Then 
\[
\sigma^{\mathcal{R}_{1}}(f)=\sigma^{\mathcal{R}_{1}'}(f)\leq\sigma^{\mathcal{R}_{3}'}(f)\leq M\sigma^{\mathcal{R}_{2}'}(f)\leq M\sigma^{\mathcal{R}_{2}}(f).
\]
\end{proof}
We can now describe the relationship between $\mathcal{R}$ and the
isomorphism class of the pair $(\crossedprod,q^{\mathcal{R}})$.
\begin{cor}
Let $(A,G,\alpha)$ be a Banach algebra dynamical system and $\mathcal{R}_{1}$
and $\mathcal{R}_{2}$ be uniformly bounded classes of possibly degenerate
continuous covariant representations of $(A,G,\alpha)$. Then the
following are equivalent:
\begin{enumerate}
\item There exists a topological algebra isomorphism $h:(A\rtimes_{\alpha}G)^{\mathcal{R}_{1}}\to(A\rtimes_{\alpha}G)^{\mathcal{R}_{2}}$
such that the following diagram commutes: 
\[
\xymatrix{ & (A\rtimes_{\alpha}G)^{\mathcal{R}_{1}}\ar[dd]^{h}\\
C_{c}(G,A)\ar[ur]^{q^{\mathcal{R}_{1}}}\ar[dr]_{q^{\mathcal{R}_{2}}}\\
 & (A\rtimes_{\alpha}G)^{\mathcal{R}_{2}}
}
\]

\item The seminorms $\sigma^{\mathcal{R}_{1}}$ and $\sigma^{\mathcal{R}_{2}}$
on $C_{c}(G,A)$ are equivalent.
\item There exist uniformly bounded sets of possibly degenerate continuous
covariant representations $\mathcal{R}_{1}'\in[\mathcal{R}_{1}]$,
$\mathcal{R}_{2}'\in[\mathcal{R}_{2}]$ and $\mathcal{R}_{3}'$ with
$\mathcal{R}_{1}'\cup\mathcal{R}_{2}'\subseteq\mathcal{R}_{3}'$ and
constants $M_{1},M_{2}\geq0$, such that
\[
\begin{array}{c}
\sigma^{\mathcal{R}_{1}'}(f)\leq\sigma^{\mathcal{R}_{3}'}(f)\leq M_{1}\sigma^{\mathcal{R}_{1}'}(f),\\
\sigma^{\mathcal{R}_{2}'}(f)\leq\sigma^{\mathcal{R}_{3}'}(f)\leq M_{2}\sigma^{\mathcal{R}_{2}'}(f),
\end{array}
\]
 for all $f\in C_{c}(G,A)$.
\item The $\mathcal{R}_{1}$-continuous covariant representations of $(A,G,\alpha)$
coincide with the $\mathcal{R}_{2}$-continuous covariant representations
of $(A,G,\alpha)$. Moreover, there exist constants $M_{1},M_{2}\geq0$,
with the property that, if $M'\geq0$ and $(\pi,U)$ is $\mathcal{R}_{1}$-continuous,
such that $\|\pi\rtimes U(f)\|\leq M'\sigma^{\mathcal{R}_{1}}(f)$
for all $f\in C_{c}(G,A)$, then $\|\pi\rtimes U(f)\|\leq M_{1}M'\sigma^{\mathcal{R}_{2}}(f)$
for all $f\in C_{c}(G,A)$, and likewise for the indices $1$ and
$2$ interchanged.
\item There exist constants $M_{1},M_{2}\geq0$ with the property that,
for every bounded representation $T:(A\rtimes_{\alpha}G)^{\mathcal{R}_{1}}\to B(X)$
there exists a bounded representation $S:(A\rtimes_{\alpha}G)^{\mathcal{R}_{2}}\to B(X)$
with $\|S\|\leq M_{1}\|T\|$, such that the diagram
\[
\xymatrix{ & (A\rtimes_{\alpha}G)^{\mathcal{R}_{1}}\ar[dr]^{T}\\
C_{c}(G,A)\ar[ur]^{q^{\mathcal{R}_{1}}}\ar[dr]_{q^{\mathcal{R}_{2}}} &  & B(X)\\
 & (A\rtimes_{\alpha}G)^{\mathcal{R}_{2}}\ar[ur]_{S}
}
\]
commutes, and likewise with the indices $1$ and $2$ interchanged.
\end{enumerate}
\end{cor}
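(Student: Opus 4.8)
The plan is to derive this corollary from Proposition \ref{thm:R-relations} by applying that proposition both to the ordered pair $(\mathcal{R}_1,\mathcal{R}_2)$ and, separately, to the reversed pair $(\mathcal{R}_2,\mathcal{R}_1)$, and then conjoining the two resulting one-directional equivalences. The natural pivot is condition (2): the seminorms $\sigma^{\mathcal{R}_1}$ and $\sigma^{\mathcal{R}_2}$ are equivalent precisely when there exist constants $M_1,M_2\geq 1$ with $\sigma^{\mathcal{R}_1}(f)\leq M_1\sigma^{\mathcal{R}_2}(f)$ and $\sigma^{\mathcal{R}_2}(f)\leq M_2\sigma^{\mathcal{R}_1}(f)$ for all $f\in C_c(G,A)$ (one may take the constants $\geq 1$ after enlarging). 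Each of these one-sided inequalities is exactly condition (2) of Proposition \ref{thm:R-relations} for the corresponding ordered pair, so the whole task reduces to checking that each remaining condition of the corollary is the conjunction of the two directed versions of the matching condition in the proposition. I would therefore organise the proof as a hub of equivalences with (2) at the centre, letting $M_1$ govern the $\mathcal{R}_1\to\mathcal{R}_2$ direction and $M_2$ the reverse; note that the constants in each of (3), (4), (5) are existentially quantified within that condition, so no tight cross-condition bookkeeping is needed.

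For conditions (4) and (5) this is immediate. The coincidence of the $\mathcal{R}_1$- and $\mathcal{R}_2$-continuous covariant representations together with the two norm estimates in (4) is exactly the content of Proposition \ref{thm:R-relations}(4) for $(\mathcal{R}_1,\mathcal{R}_2)$ and for $(\mathcal{R}_2,\mathcal{R}_1)$ simultaneously, and the two-directional transfer of bounded representations in (5) is Proposition \ref{thm:R-relations}(5) applied in each direction. For condition (3), in the direction (2)$\Rightarrow$(3) I would choose $\mathcal{R}_1'\in[\mathcal{R}_1]$ and $\mathcal{R}_2'\in[\mathcal{R}_2]$ and set $\mathcal{R}_3':=\mathcal{R}_1'\cup\mathcal{R}_2'$ (again a uniformly bounded set), so that $\sigma^{\mathcal{R}_3'}=\max\{\sigma^{\mathcal{R}_1'},\sigma^{\mathcal{R}_2'}\}$; the seminorm equivalence then dominates this maximum by a constant multiple of each of $\sigma^{\mathcal{R}_1'}$ and $\sigma^{\mathcal{R}_2'}$, producing both two-sided squeezes at once, while the lower bounds $\sigma^{\mathcal{R}_i'}\leq\sigma^{\mathcal{R}_3'}$ are automatic. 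Conversely, chaining the two squeezes in (3) through the identities $\sigma^{\mathcal{R}_i}=\sigma^{\mathcal{R}_i'}$ recovers both one-sided seminorm inequalities, hence (2); this is precisely the (3)$\Rightarrow$(2) computation of the proposition carried out in both directions.

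The one condition requiring more than a re-indexing of the proposition is (1), and this is where I expect the only genuine work. For (2)$\Rightarrow$(1), Proposition \ref{thm:R-relations}(1) supplies, in the two directions, bounded homomorphisms $h\colon(A\rtimes_{\alpha}G)^{\mathcal{R}_1}\to(A\rtimes_{\alpha}G)^{\mathcal{R}_2}$ and $g\colon(A\rtimes_{\alpha}G)^{\mathcal{R}_2}\to(A\rtimes_{\alpha}G)^{\mathcal{R}_1}$, each intertwining the quotient maps, i.e. $h\circ q^{\mathcal{R}_1}=q^{\mathcal{R}_2}$ and $g\circ q^{\mathcal{R}_2}=q^{\mathcal{R}_1}$. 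The key step is then to observe that $g\circ h$ fixes $q^{\mathcal{R}_1}(f)$ and $h\circ g$ fixes $q^{\mathcal{R}_2}(f)$ for every $f\in C_c(G,A)$, so by continuity and the density of $q^{\mathcal{R}_i}(C_c(G,A))$ in the respective crossed product these compositions coincide with the identity maps; hence $g=h^{-1}$ and $h$ is a topological algebra isomorphism making the required triangle commute. The converse (1)$\Rightarrow$(2) is then immediate, since $h$ and $h^{-1}$ are bounded homomorphisms compatible with the quotient maps and therefore realise Proposition \ref{thm:R-relations}(1) in both directions, yielding Proposition \ref{thm:R-relations}(2), that is seminorm equivalence, in each direction. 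The main obstacle, such as it is, lies entirely in this density-and-continuity argument identifying the two one-sided homomorphisms as mutual inverses; everything else is bookkeeping on the constants.
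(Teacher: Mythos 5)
Your proposal is correct and follows essentially the same route as the paper, whose proof consists of the single line that the corollary follows from Proposition \ref{thm:R-relations}; your write-up simply makes explicit the two-directional application of that proposition, including the density-and-continuity argument showing the two one-sided homomorphisms are mutual inverses. No gaps.
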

\begin{proof}
This follows from Proposition \ref{thm:R-relations}.\end{proof}

\section{Uniqueness of the crossed product\label{sec:Uniqueness-of-the-crossed-prod}}

Theorem \ref{thm:General-Correspondence-Theorem} asserts, amongst
others, that all non-degenerate $\mathcal{R}$-continuous covariant
representations of $(A,G,\alpha)$ can be generated from the non-degenerate
bounded representations of $\crossedprod$, with the aid of $\leftcent(\crossedprod)$
and the pair $(i_{A}^{\mathcal{R}},i_{G}^{\mathcal{R}})$. In this
section we show that, under mild additional hypotheses, $\crossedprod$
is the unique Banach algebra with this generating property. These
results are similar in nature as Raeburn's for the crossed product
of a $C^{*}$-algebra, see \cite{RaeburnOriginalUniversalPaper} or
\cite[Theorem 2.61]{Williams}.

We start with the general framework of how to generate many non-degenerate
$\mathcal{R}$-continuous covariant representations from a suitable
basic one, on a Banach space that is a Banach algebra.
\begin{lem}
\label{lem:existence_of_generating_pair}Let $(A,G,\alpha)$ be a
Banach algebra dynamical system, and let $\mathcal{R}$ be a uniformly
bounded class of continuous covariant representations of $(A,G,\alpha)$.
Let $C$ be a Banach algebra with a bounded approximate left identity,
and let $(k_{A},k_{G})$ be a non-degenerate $\mathcal{R}$-continuous
covariant representation of $(A,G,\alpha)$ on the Banach space $C$,
such that $k_{A}(A),k_{G}(G)\subseteq\leftcent(C)$. Suppose $T:C\to B(X)$
is a non-degenerate bounded representation of $C$ on a Banach space
$X$. Let $\overline{T}:\leftcent(C)\to B(X)$ be the non-degenerate
bounded representation of $\leftcent(C)$ such that the following
diagram commutes:
\[
\xymatrix{C\ar[dr]^{\lambda}\ar[r]^{T} & B(X)\\
 & \leftcent(C)\ar[u]_{\overline{T}}
}
\]
Then the pair $(\overline{T}\circ k_{A},\overline{T}\circ k_{G})$
is a non-degenerate $\mathcal{R}$-continuous covariant representation
of $(A,G,\alpha)$, and $(\overline{T}\circ k_{A})\rtimes(\overline{T}\circ k_{G})=\overline{T}\circ(k_{A}\rtimes k_{G})$.\end{lem}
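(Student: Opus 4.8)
The plan is to abbreviate $\pi:=\overline{T}\circ k_{A}$ and $U:=\overline{T}\circ k_{G}$, and to establish in turn that $(\pi,U)$ is covariant, continuous, non-degenerate, and $\mathcal{R}$-continuous, the last coming out of the integrated-form identity itself. The one tool used everywhere is the defining relation $\overline{T}(L)T(c)=T(Lc)$ for $L\in\leftcent(C)$ and $c\in C$, combined with the density of the linear span of $T(C)X$ in $X$ afforded by non-degeneracy of $T$.

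First I would note that $\pi$ and $U$ are representations, being composites of the homomorphisms $\overline{T}$ with $k_{A}$ and $k_{G}$. Before checking covariance I would record that $\overline{T}(1_{\leftcent(C)})=\mathrm{id}_{X}$: from $\overline{T}(1)T(c)=T(c)$ for all $c$, the operator $\overline{T}(1)$ is the identity on the dense span of $T(C)X$, hence on $X$. Consequently $U_{e}=\mathrm{id}_{X}$, so $U_{s}^{-1}=U_{s^{-1}}=\overline{T}\big((k_{G})_{s}^{-1}\big)$, and applying the homomorphism $\overline{T}$ to the covariance identity $k_{A}(\alpha_{s}(a))=(k_{G})_{s}\,k_{A}(a)\,(k_{G})_{s}^{-1}$, which holds in $\leftcent(C)$, yields $\pi(\alpha_{s}(a))=U_{s}\pi(a)U_{s}^{-1}$. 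For continuity, $\pi$ is bounded as a composite of bounded maps; for strong continuity of $U$ I would use $U_{r}T(c)y=T\big((k_{G})_{r}c\big)y$, so that strong continuity of $k_{G}$ makes $r\mapsto U_{r}T(c)y$ continuous for each $c,y$, and since $\|U_{r}\|\le\|\overline{T}\|\,\|(k_{G})_{r}\|$ is bounded on compact sets (a strongly continuous $k_{G}$ is locally bounded by Banach--Steinhaus), a standard density argument on the span of $T(C)X$ upgrades this to strong continuity on all of $X$. Non-degeneracy of the pair I would get by showing $\pi$ is non-degenerate: from $\pi(a)T(c)y=T\big(k_{A}(a)c\big)y$ and the density in $C$ of the span of $\{k_{A}(a)c\}$ (non-degeneracy of $k_{A}$), the closed span of $\pi(A)X$ contains $T(C)X$ and hence equals $X$.

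The heart of the matter, and the step I expect to be the main obstacle, is the integrated-form identity $(\pi\rtimes U)(f)=\overline{T}\big((k_{A}\rtimes k_{G})(f)\big)$, which simultaneously delivers $\mathcal{R}$-continuity. First I would verify that $(k_{A}\rtimes k_{G})(f)$ lies in $\leftcent(C)$, and not merely in $B(C)$, so that $\overline{T}\circ(k_{A}\rtimes k_{G})$ is even defined: each $k_{A}(f(s))(k_{G})_{s}$ is a left centralizer, and the left-centralizer property $L(cc')=L(c)c'$ passes through the $C$-valued integral by pulling right multiplication by $c'$ out of it. Then I would check the identity on the dense set $T(C)X$: for $c\in C$ and $y\in X$,
\[
(\pi\rtimes U)(f)T(c)y=\int_{G}\overline{T}\big(k_{A}(f(s))\big)T\big((k_{G})_{s}c\big)y\,ds=\int_{G}T\big(k_{A}(f(s))(k_{G})_{s}c\big)y\,ds,
\]
and pulling the bounded map $T(\cdot)y$ out of the $C$-valued integral gives $T\big((k_{A}\rtimes k_{G})(f)c\big)y$, which equals $\overline{T}\big((k_{A}\rtimes k_{G})(f)\big)T(c)y$ by the relation $\overline{T}(L)T(c)=T(Lc)$. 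As both sides are bounded operators agreeing on a dense set, they coincide.

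The delicate points are precisely the commutation of the bounded linear maps with the duality-defined vector-valued integrals and the confirmation that the integrated form remains inside $\leftcent(C)$; once these are secured the identity follows cleanly. Finally, combining the identity with boundedness of $\overline{T}$ and the $\mathcal{R}$-continuity of $(k_{A},k_{G})$, there is a constant $M\ge 0$ with $\|(\pi\rtimes U)(f)\|\le\|\overline{T}\|\,\|(k_{A}\rtimes k_{G})(f)\|\le M\|\overline{T}\|\,\sigma^{\mathcal{R}}(f)$ for all $f\in C_{c}(G,A)$, which shows $\pi\rtimes U$ is bounded with respect to $\sigma^{\mathcal{R}}$. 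This establishes $\mathcal{R}$-continuity of $(\pi,U)$ and, together with the identity itself, completes the proof.
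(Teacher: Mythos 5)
Your proposal is correct and follows essentially the same route as the paper's proof: covariance, strong continuity and non-degeneracy are obtained from the relation $\overline{T}(L)T(c)=T(Lc)$ and density of $\operatorname{span}T(C)X$, and the integrated-form identity is verified on $T(C)X$ by pulling bounded maps through the vector-valued integrals. The only (immaterial) variation is that you show $(k_{A}\rtimes k_{G})(f)\in\leftcent(C)$ by pulling right multiplication through the $C$-valued integral, whereas the paper invokes the SOT-closedness of $\leftcent(C)$ in $B(C)$.
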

\begin{proof}
It is clear that $\overline{T}\circ k_{A}$ is a continuous representation
of $A$ on $X$. Since $\overline{T}$ is unital \cite[Theorem 4.1]{2009arXiv0904.3268D},
$\overline{T}\circ k_{G}$ is a representation of $G$ on $X$. Using
that $\overline{T}(L)T(c)=T(Lc)$ for $L\in\leftcent(C)$ and $c\in C$,
(cf.\,\cite[Theorem 4.1]{2009arXiv0904.3268D}), we find, for $r\in G$,
$c\in C$ and $x\in X$, that $(\overline{T}\circ k_{G}(r))T(c)x=T(k_{G}(r)c)x$.
Since $k_{G}$ is strongly continuous and $T$ is continuous, we see
that 
\[
\lim_{r\to e}(\overline{T}\circ k_{G}(r))T(c)x=\lim_{r\to e}T(k_{G}(r)c)x=T(c)x,
\]
for all $c\in C$ and $x\in X$. The non-degeneracy of $T$, together
with \cite[Corollary 2.5]{2011arXiv1104.5151D} then imply that $T\circ k_{G}$
is strongly continuous. It is a routine verification that $(\overline{T}\circ k_{A},\overline{T}\circ k_{G})$
is covariant, so that $(\overline{T}\circ k_{A},\overline{T}\circ k_{G})$
is a continuous covariant representation of $(A,G,\alpha)$ on $C$.

We claim that $k_{A}\rtimes k_{G}:C_{c}(G,A)\to B(C)$ has its image
in $\leftcent(C)$, and that $(\overline{T}\circ k_{A})\rtimes(\overline{T}\circ k_{G})=\overline{T}\circ(k_{A}\rtimes k_{G})$.
The $\mathcal{R}$-continuity of $(k_{A},k_{G})$ and the continuity
of $\overline{T}$ then show that $(\overline{T}\circ k_{A},\overline{T}\circ k_{G})$
is $\mathcal{R}$-continuous. As to this claim, note that, for $f\in C_{c}(G,A)$,
the integrand in $k_{A}\rtimes k_{G}(f)=\int_{G}k_{A}(f(r))k_{G}(r)\, dr$
takes values in the SOT-closed subspace $\leftcent(C)$ of $B(C)$,
hence the integral is likewise in this subspace. Hence $\overline{T}\circ(k_{A}\rtimes k_{G}):C_{c}(G,A)\to B(X)$
is a meaningfully defined map. Using that that continuous operators
can be pulled through the integral \cite[Ch. 3, Exercise 24]{Rudin}
and the definition of operator valued integrals \cite[Proposition 2.19]{2011arXiv1104.5151D},
we then have for all $x\in X$:
\begin{eqnarray*}
\overline{T}\left(k_{A}\rtimes k_{G}(f)\right)T(c)x & = & T\left(k_{A}\rtimes k_{G}(f)c\right)x\\
 & = & T\left(\int_{G}k_{A}(f(r))k_{G}(r)\, dr\ c\right)x\\
 & = & T\left(\int_{G}k_{A}(f(r))k_{G}(r)c\, dr\right)x\\
 & = & \int_{G}T\left(k_{A}(f(r))k_{G}(r)c\right)x\, dr\\
 & = & \int_{G}\overline{T}\left(k_{A}(f(r))k_{G}(r)\right)T(c)x\, dr\\
 & = & \int_{G}\overline{T}\circ k_{A}(f(r))\overline{T}\circ k_{G}(r)T(c)x\, dr\\
 & = & \left(\int_{G}\overline{T}\circ k_{A}(f(r))\overline{T}\circ k_{G}(r)\, dr\right)T(c)x\\
 & = & \left((\overline{T}\circ k_{A})\rtimes(\overline{T}\circ k_{G})(f)\right)T(c)x.
\end{eqnarray*}
Since $T$ is non-degenerate, this establishes the claim.

It remains to show that $\overline{T}\circ k_{A}$ is non-degenerate.
Let $x\in X$ and $\varepsilon>0$ be arbitrary. Since $\overline{T}$
is non-degenerate \cite[Theorem 4.1]{2009arXiv0904.3268D}, there
exist finite sets $\{c_{i}\}_{i=1}^{n}\subseteq C$ and $\{x_{i}\}_{i=1}^{n}\subseteq X$
such that $\|\sum_{i=1}^{n}T(c_{i})x_{i}-x\|<\varepsilon/2$. Since
$k_{A}$ is non-degenerate, for every $i\in\{1,\ldots,n\}$, there
exist finite sets $\{a_{i,j}\}_{j=1}^{m_{i}}\subseteq A$ and $\{d_{i,j}\}_{j=1}^{m_{i}}\subseteq C$
such that $\|T\|\|x_{i}\|\|c_{i}-\sum_{j=1}^{m_{i}}k_{A}(a_{i,j})d_{i,j}\|<\varepsilon/2n$.
Then\\
\begin{eqnarray*}
 &  & \left\Vert x-\sum_{i=1}^{n}\sum_{j=1}^{m_{i}}\left(\overline{T}\circ k_{A}(a_{i,j})\right)T(d_{i,j})x_{i}\right\Vert \\
 & = & \left\Vert x-\sum_{i=1}^{n}\sum_{j=1}^{m_{i}}T\left(k_{A}(a_{i,j})d_{i,j}\right)x_{i}\right\Vert \\
 & \leq & \left\Vert x-\sum_{i=1}^{n}T(c_{i})x_{i}\right\Vert +\left\Vert \sum_{i=1}^{n}T(c_{i})x_{i}-\sum_{i=1}^{n}\sum_{j=1}^{m_{i}}T\left(k_{A}(a_{i,j})d_{i,j}\right)x_{i}\right\Vert \\
 & \leq & \left\Vert x-\sum_{i=1}^{n}T(c_{i})x_{i}\right\Vert +\sum_{i=1}^{n}\|T\|\left\Vert c_{i}-\sum_{j=1}^{m_{i}}k_{A}(a_{i,j})d_{i,j}\right\Vert \|x_{i}\|\\
 & < & \frac{\varepsilon}{2}+\frac{\varepsilon}{2}.
\end{eqnarray*}
We conclude that $\overline{T}\circ k_{A}$ is non-degenerate.\end{proof}
Naturally any Banach algebra $C'$ isomorphic to $C$ as in the previous
lemma has a similar ``generating pair'' $(k_{A}',k_{G}')$. The details
are in the following result, the routine verification of which is
left to the reader.
\begin{lem}
\label{lem:generating-pair-translation}Let $(A,G,\alpha)$, $\mathcal{R}$,
$C$ and $(k_{A},k_{G})$ be as in Lemma \ref{lem:existence_of_generating_pair}.
Suppose $C'$ is a Banach algebra and $\psi:C\to C'$ is a topological
isomorphism. Then:
\begin{enumerate}
\item $\psi_{l}:\leftcent(C)\to\leftcent(C')$, defined by $\psi_{l}(L):=\psi L\psi^{-1}$
for $L\in\leftcent(C)$, is a topological isomorphism.
\item The pair $(k_{A}',k_{G}'):=(\psi_{l}\circ k_{A},\psi_{l}\circ k_{G})$
is a non-degenerate $\mathcal{R}$-continuous covariant representation
of $(A,G,\alpha)$ on $C'$, such that $k_{A}'(A),k_{G}'(G)\subseteq\leftcent(C')$.
\item If $T:C\to B(X)$ is a non-degenerate bounded representation, then
so is $T':C'\to B(X)$, where $T':=T\circ\psi^{-1}$.
\item If $T:C\to B(X)$ is a non-degenerate bounded representation, and
$\overline{T'}:\leftcent(C')\to B(X)$ is the non-degenerate bounded
representation of $\leftcent(C')$ such that the diagram
\[
\xymatrix{C'\ar[dr]^{\lambda}\ar[r]^{T'} & B(X)\\
 & \leftcent(C')\ar[u]_{\overline{T'}}
}
\]
commutes, then $\overline{T}\circ k_{A}=\overline{T'}\circ k_{A}'$
and $\overline{T}\circ k_{G}=\overline{T'}\circ k_{G}'$.
\end{enumerate}
\end{lem}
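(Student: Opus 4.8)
The plan is to verify the four assertions in order, transporting each structure on $C$ to $C'$ by conjugation with $\psi$, and to isolate the only two points that are not entirely mechanical: the $\mathcal{R}$-continuity in (2) and the identity $\overline{T'}\circ\psi_l=\overline{T}$ underlying (4).

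For (1) I would first check that $\psi_l(L)=\psi L\psi^{-1}$ indeed lands in $\leftcent(C')$: writing $c'=\psi(c)$, $d'=\psi(d)$ and using that $\psi$ is an algebra homomorphism, one computes $\psi_l(L)(c'd')=\psi(L(c)d)=\psi_l(L)(c')\,d'$. Since conjugation by a fixed topological isomorphism is an algebra isomorphism of $B(C)$ onto $B(C')$ (multiplicativity $\psi_l(L_1)\psi_l(L_2)=\psi_l(L_1L_2)$ and the bound $\|\psi_l(L)\|\le\|\psi\|\,\|\psi^{-1}\|\,\|L\|$ are immediate, with inverse $L'\mapsto\psi^{-1}L'\psi$), its restriction $\leftcent(C)\to\leftcent(C')$ is the required topological isomorphism. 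Assertion (3) is equally direct: $T'=T\circ\psi^{-1}$ is a composition of a homomorphism and a representation, hence a bounded representation, and $T'(C')X=T(C)X$ shows it is non-degenerate.

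For (2), since $\psi_l$ is an algebra homomorphism $\leftcent(C)\to\leftcent(C')\subseteq B(C')$ and $k_A,k_G$ have image in $\leftcent(C)$, the compositions $k_A'=\psi_l\circ k_A$ and $k_G'=\psi_l\circ k_G$ are again representations with image in $\leftcent(C')$; covariance follows by applying the multiplicative, inverse-preserving $\psi_l$ to the covariance identity for $(k_A,k_G)$. Norm-boundedness of $k_A'$ and strong continuity of $k_G'$ (the latter obtained by writing $(k_G')_s c'=\psi(k_G)_s\psi^{-1}(c')$ and using strong continuity of $k_G$ together with continuity of $\psi$) give continuity. For $\mathcal{R}$-continuity I would pull $\psi_l$ through the defining integral: as conjugation is a bounded operator on the relevant operator spaces it commutes with the operator-valued integral (cf.\,\cite[Ch.\,3, Exercise 24]{Rudin}), whence $k_A'\rtimes k_G'(f)=\psi\,(k_A\rtimes k_G(f))\,\psi^{-1}$ and therefore $\|k_A'\rtimes k_G'(f)\|\le\|\psi\|\,\|\psi^{-1}\|\,\|k_A\rtimes k_G(f)\|\le\|\psi\|\,\|\psi^{-1}\|\,M\,\sigma^{\mathcal{R}}(f)$, with $M$ a bound witnessing the $\mathcal{R}$-continuity of $(k_A,k_G)$. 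Finally $k_A'(a)c'=\psi\bigl(k_A(a)\psi^{-1}(c')\bigr)$ exhibits $\{k_A'(a)c'\}$ as the image under the topological isomorphism $\psi$ of $\{k_A(a)c\}$, so non-degeneracy of $k_A$ transports to $k_A'$.

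The substance is in (4), which I would prove by first establishing $\overline{T'}\circ\psi_l=\overline{T}$ as representations of $\leftcent(C)$. Fix $L\in\leftcent(C)$, $c\in C$ and $x\in X$. Using $T'\circ\psi=T$, the defining property $\overline{T'}(L')T'(c')=T'(L'c')$ of \cite[Theorem 4.1]{2009arXiv0904.3268D}, the identity $\psi_l(L)\psi(c)=\psi(Lc)$, and then the analogous defining property of $\overline{T}$, one obtains
\[
\overline{T'}(\psi_l(L))\,T(c)x=T'\bigl(\psi(Lc)\bigr)x=T(Lc)x=\overline{T}(L)\,T(c)x.
\]
Because $T$ is non-degenerate, the vectors $T(c)x$ have dense span, so the two bounded operators $\overline{T'}(\psi_l(L))$ and $\overline{T}(L)$ coincide; as $L$ was arbitrary, $\overline{T'}\circ\psi_l=\overline{T}$. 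Composing on the right with $k_A$ and with $k_G$ and recalling $k_A'=\psi_l\circ k_A$, $k_G'=\psi_l\circ k_G$ yields $\overline{T}\circ k_A=\overline{T'}\circ k_A'$ and $\overline{T}\circ k_G=\overline{T'}\circ k_G'$. The only genuine care points are the passage of $\psi_l$ through the integral in (2) and the density argument closing (4); everything else is bookkeeping, consistent with the assertion that the verification is routine.
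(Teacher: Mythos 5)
The paper itself gives no proof of this lemma — it is explicitly "left to the reader" as a routine verification — and your write-up supplies exactly the expected argument correctly: conjugation by $\psi$ transports centralizers, representations and the operator-valued integral, and the identity $\overline{T'}\circ\psi_l=\overline{T}$ (proved via $\overline{T'}(\psi_l(L))T(c)x=T(Lc)x=\overline{T}(L)T(c)x$ plus non-degeneracy of $T$) is indeed the one non-trivial point behind (4). Nothing is missing; in particular you correctly isolate the two places where continuity/density is actually needed (pulling $\psi_l$ through the integral for $\mathcal{R}$-continuity, and the density of $\operatorname{span}T(C)X$ for (4)).
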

Now let $\mathcal{R}$ be a uniformly bounded class of non-degenerate
continuous covariant representations of $(A,G,\alpha)$, where $A$
has a bounded approximate left identity. Then, according to \cite[Theorem 7.2]{2011arXiv1104.5151D},
$\crossedprod$ has a bounded approximate left identity, and the maps
$i_{A}^{\mathcal{R}}:A\to B(\crossedprod)$ and $i_{G}^{\mathcal{R}}:G\to B(\crossedprod)$
form a non-degenerate $\mathcal{R}$-continuous covariant representation
of $(A,G,\alpha)$ on the Banach space $\crossedprod$, with images
in $\leftcent(\crossedprod)$. According to Lemma \ref{lem:existence_of_generating_pair},
the triple $(\crossedprod,i_{A}^{\mathcal{R}},i_{G}^{\mathcal{R}})$
can be used to produce non-degenerate $\mathcal{R}$-continuous covariant
representations of $(A,G,\alpha)$ from non-degenerate bounded representations
of $\crossedprod$, and, according to Theorem \ref{thm:General-Correspondence-Theorem},
all non-degenerate $\mathcal{R}$-continuous covariant representations
are thus obtained. According to Lemma \ref{lem:generating-pair-translation},
any Banach algebra isomorphic to $\crossedprod$ has the same property.
We will now proceed to show the converse: If $(B,k_{A},k_{G})$ is
a triple generating all non-degenerate $\mathcal{R}$-continuous covariant
representations of $(A,G,\alpha)$, then it can be obtained from $(\crossedprod,i_{A}^{\mathcal{R}},i_{G}^{\mathcal{R}})$
as in Lemma \ref{lem:generating-pair-translation}.

We start with a preliminary observation that is of some interest in
its own right.
\begin{prop}
\label{prop:left-regular-is-topological-embedding}Let $(A,G,\alpha)$
be a Banach algebra dynamical system where $A$ has a bounded approximate
left identity. Let $\mathcal{R}$ be a uniformly bounded class of
non-degenerate continuous covariant representations. Then the left
regular representation $\lambda:\crossedprod\to\leftcent(\crossedprod)$
is a topological embedding.\end{prop}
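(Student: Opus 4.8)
The plan is to show that $\lambda$ is bounded both above and below, which is exactly what it means for it to be a topological embedding. The upper bound is immediate and requires no hypotheses: since the norm on $\crossedprod$ is submultiplicative, $\|\lambda(b)c\|=\|bc\|\leq\|b\|\,\|c\|$ for all $b,c\in\crossedprod$, so $\lambda$ is contractive. All the content lies in the lower bound, i.e.\ in producing a constant $m>0$ with $\|\lambda(b)\|\geq m\|b\|$ for all $b\in\crossedprod$. This is precisely the point where the absence of a bounded approximate \emph{right} identity is felt: were one available, say $(v_{j})$, one could estimate $\|\lambda(b)\|\geq\|bv_{j}\|/\|v_{j}\|$ and pass to the limit, but in general no such device is at hand.

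My approach is to import an external yardstick against which $\|\lambda(b)\|$ can be measured, namely an isometric representation of $\crossedprod$. Concretely, I would first invoke Proposition \ref{thm:isometric_realization}: choosing a set $S\in[\mathcal{R}]$ all of whose members are non-degenerate (for instance a subset of $\mathcal{R}$ itself, as in the construction preceding Definition \ref{def:equivalent_uniformly_bounded_sets}) and fixing any $p$ with $1\leq p<\infty$, one obtains a non-degenerate $\mathcal{R}$-continuous covariant representation $((\oplus_{S}\pi),(\oplus_{S}U))$ on $X:=\ell^{p}\{X_{\pi}:(\pi,U)\in S\}$ whose integrated form induces a representation $T:=((\oplus_{S}\pi)\rtimes(\oplus_{S}U))^{\mathcal{R}}$ of $\crossedprod$ that is \emph{isometric}, so that $\|T(b)\|=\|b\|$ for every $b\in\crossedprod$. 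Because $S$ consists of non-degenerate representations, the covariant representation is non-degenerate (Proposition \ref{thm:isometric_realization}), and hence $T$ is a non-degenerate bounded representation by Theorem \ref{thm:General-Correspondence-Theorem}(1).

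The decisive step is then to exploit the canonical factorization of a non-degenerate representation through the left centralizer algebra. Since $A$ has a bounded approximate left identity, so does $\crossedprod$, and hence the non-degenerate bounded representation $T$ induces a bounded representation $\overline{T}:\leftcent(\crossedprod)\to B(X)$ satisfying $T=\overline{T}\circ\lambda$ (the commuting triangle recalled in Section \ref{sec:Preliminaries-and-recapitulation}). Combining this with the isometry of $T$ gives, for every $b\in\crossedprod$,
\[
\|b\|=\|T(b)\|=\|\overline{T}(\lambda(b))\|\leq\|\overline{T}\|\,\|\lambda(b)\|,
\]
so that $\|\lambda(b)\|\geq\|b\|/\|\overline{T}\|$. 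As $\overline{T}$ is a bounded representation we have $\|\overline{T}\|<\infty$, and the lower bound follows with $m:=\|\overline{T}\|^{-1}>0$. Together with contractivity this shows that $\lambda$ is a topological embedding.

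I expect the only genuine subtlety --- the main obstacle --- to be conceptual rather than computational: recognizing that the right tools are the isometric realization of Proposition \ref{thm:isometric_realization} and the factorization $T=\overline{T}\circ\lambda$, and verifying that the non-degeneracy needed to form $\overline{T}$ is secured by drawing $S$ from among the (non-degenerate) members of $\mathcal{R}$. Once these two ingredients are in place the estimate is a one-line computation, and no appeal to a bounded approximate right identity is required.
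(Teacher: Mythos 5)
Your proof is correct and takes essentially the same route as the paper: both rest on the isometric realization from Proposition \ref{thm:isometric_realization} together with the factorization $T=\overline{T}\circ\lambda$ through $\leftcent(\crossedprod)$, yielding $\|b\|=\|\overline{T}(\lambda(b))\|\leq\|\overline{T}\|\,\|\lambda(b)\|$. The only difference is cosmetic: the paper re-derives the identity $T=\overline{T}\circ\lambda$ on the dense image of $C_{c}(G,A)$ via Lemma \ref{lem:existence_of_generating_pair}, the General Correspondence Theorem and $(i_{A}^{\mathcal{R}}\rtimes i_{G}^{\mathcal{R}})^{\mathcal{R}}=\lambda$, and makes the bound $\|\overline{T}\|\leq M\|T\|=M$ explicit, whereas you invoke the commuting triangle directly.
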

\begin{proof}
According to Proposition \ref{thm:isometric_realization}, there exists
a non-degenerate $\mathcal{R}$-continuous covariant representation
$(\pi,U)$ such that $(\pi\rtimes U)^{\mathcal{R}}$ is a non-degenerate
isometric representation of $\crossedprod$. According to Theorem
\ref{thm:General-Correspondence-Theorem}, $\pi=\overline{(\pi\rtimes U)^{\mathcal{R}}}\circ i_{A}^{\mathcal{R}}$
and $U=\overline{(\pi\rtimes U)^{\mathcal{R}}}\circ i_{G}^{\mathcal{R}}$.
Furthermore, according to Lemma \ref{lem:existence_of_generating_pair},
\[
(\overline{(\pi\rtimes U)^{\mathcal{R}}}\circ i_{A}^{\mathcal{R}})\rtimes(\overline{(\pi\rtimes U)^{\mathcal{R}}}\circ i_{G}^{\mathcal{R}})=\overline{(\pi\rtimes U)^{\mathcal{R}}}\circ(i_{A}^{\mathcal{R}}\rtimes i_{G}^{\mathcal{R}}).
\]
We recall \cite[Theorem 7.2]{2011arXiv1104.5151D} that $(i_{A}^{\mathcal{R}}\rtimes i_{G}^{\mathcal{R}})^{\mathcal{R}}=\lambda$.
Combining all this, we see, with $M$ denoting an upper bound for
an approximate left identity of $\crossedprod$ \cite[Corollary 4.6]{2011arXiv1104.5151D},
that, for $f\in C_{c}(G,A)$:
\begin{eqnarray*}
\|q^{\mathcal{R}}(f)\|^{\mathcal{R}} & = & \|(\pi\rtimes U)^{\mathcal{R}}(q^{\mathcal{R}}(f))\|\\
 & = & \|\pi\rtimes U(f)\|\\
 & = & \|(\overline{(\pi\rtimes U)^{\mathcal{R}}}\circ i_{A}^{\mathcal{R}})\rtimes(\overline{(\pi\rtimes U)^{\mathcal{R}}}\circ i_{G}^{\mathcal{R}})(f)\|\\
 & = & \|\overline{(\pi\rtimes U)^{\mathcal{R}}}\circ(i_{A}^{\mathcal{R}}\rtimes i_{G}^{\mathcal{R}})(f)\|\\
 & = & \|\overline{(\pi\rtimes U)^{\mathcal{R}}}\circ(i_{A}^{\mathcal{R}}\rtimes i_{G}^{\mathcal{R}})^{\mathcal{R}}(q^{\mathcal{R}}(f))\|\\
 & = & \|\overline{(\pi\rtimes U)^{\mathcal{R}}}(\lambda(q^{\mathcal{R}}(f)))\|\\
 & \leq & M\|(\pi\rtimes U)^{\mathcal{R}}\|\|\lambda(q^{\mathcal{R}}(f))\|\\
 & = & M\|\lambda(q^{\mathcal{R}}(f))\|.
\end{eqnarray*}
Since the inequality $\|\lambda(q^{\mathcal{R}}(f))\|\leq\|q^{\mathcal{R}}(f)\|$
is trivial, the result follows from the density of $q^{\mathcal{R}}(C_{c}(G,A))$
in $\crossedprod$.\end{proof}
Since $q^{\mathcal{R}}(C_{c}(G,A))$ is dense in $\crossedprod$,
we conclude that $\lambda\circ q^{\mathcal{R}}(C_{c}(G,A))$ is dense
in $\lambda(\crossedprod)$, i.e., that $(i_{A}^{\mathcal{R}}\rtimes i_{G}^{\mathcal{R}})(C_{c}(G,A))$
is dense in $\lambda(\crossedprod)$. Together with Proposition \ref{prop:left-regular-is-topological-embedding}
this gives the two additional hypotheses alluded to before, under
which the following uniqueness theorem can now be established. As
mentioned earlier, this should be compared with Raeburn's result for
the crossed product of a $C^{*}$-algebra, see \cite{RaeburnOriginalUniversalPaper}
or \cite[Theorem 2.61]{Williams}.
\begin{thm}
\label{thm:universal-property}Let $(A,G,\alpha)$ be a Banach algebra
dynamical system, where $A$ has a bounded approximate left identity.
Let $\mathcal{R}$ be a uniformly bounded class of non-degenerate
continuous covariant representations of $(A,G,\alpha)$. Let $B$
be a Banach algebra with a bounded approximate left identity, such
that $\lambda:B\to\leftcent(B)$ is a topological embedding. Let $(k_{A},k_{G})$
be a non-degenerate $\mathcal{R}$-continuous covariant representation
of $(A,G,\alpha)$ on the Banach space $B$, such that
\begin{enumerate}
\item $k_{A}(A),k_{G}(G)\subseteq\leftcent(B)$
\item $(k_{A}\rtimes k_{G})(C_{c}(G,A))\subseteq\lambda(B)$
\item $(k_{A}\rtimes k_{G})(C_{c}(G,A))$ is dense in $\lambda(B)$.
\end{enumerate}
Suppose that, for each non-degenerate $\mathcal{R}$-continuous covariant
representation $(\pi,U)$ of $(A,G,\alpha)$ on a Banach space $X$,
there exists a non-degenerate bounded representation $T:B\to B(X)$
such that the non-degenerate bounded representation $\overline{T}:\leftcent(B)\to B(X)$
in the commuting diagram
\[
\xymatrix{B\ar[dr]^{\lambda}\ar[r]^{T} & B(X)\\
 & \leftcent(B)\ar[u]_{\overline{T}}
}
\]
generates $(\pi,U)$ as in Lemma \ref{lem:existence_of_generating_pair},
i.e., is such that $\overline{T}\circ k_{A}=\pi$ and $\overline{T}\circ k_{G}=U$.

Then there exists a unique topological isomorphism $\psi:\crossedprod\to B$,
such that the induced topological isomorphism $\psi_{l}:\leftcent(\crossedprod)\to\leftcent(B)$,
defined by $\psi_{l}(L):=\psi L\psi^{-1}$ for $L\in\leftcent(\crossedprod)$,
induces $(k_{A},k_{G})$ as in Lemma \ref{lem:generating-pair-translation}
from $(i_{A}^{\mathcal{R}},i_{G}^{\mathcal{R}})$, i.e., is such that
$k_{A}=\psi_{l}\circ i_{A}^{\mathcal{R}}$ and $k_{G}=\psi_{l}\circ i_{G}^{\mathcal{R}}$.\end{thm}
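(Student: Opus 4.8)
The plan is to build $\psi$ directly from the integrated form of $(k_{A},k_{G})$, and to recognise the isometric representation of Proposition \ref{thm:isometric_realization} as the device that forces $\psi$ to be bounded below. First I would define a candidate on $C_{c}(G,A)$. Since $\lambda:B\to\leftcent(B)$ is a topological embedding it is injective with bounded inverse on its (closed) image $\lambda(B)$, and hypothesis (2) gives $(k_{A}\rtimes k_{G})(C_{c}(G,A))\subseteq\lambda(B)$, so I may set $\psi_{0}:=\lambda^{-1}\circ(k_{A}\rtimes k_{G}):C_{c}(G,A)\to B$. This is an algebra homomorphism (the homomorphism $k_{A}\rtimes k_{G}$ followed by the algebra isomorphism $\lambda^{-1}$ of $\lambda(B)$ onto $B$). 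The $\mathcal{R}$-continuity of $(k_{A},k_{G})$ yields $\|(k_{A}\rtimes k_{G})(f)\|\leq C\sigma^{\mathcal{R}}(f)$, and composing with the bounded $\lambda^{-1}$ shows $\psi_{0}$ is $\sigma^{\mathcal{R}}$-bounded; hence it induces a bounded homomorphism $\psi:=\psi_{0}^{\mathcal{R}}:\crossedprod\to B$ with $\psi\circ q^{\mathcal{R}}=\psi_{0}$, so that $\lambda\circ\psi\circ q^{\mathcal{R}}=k_{A}\rtimes k_{G}$. By hypothesis (3), $(k_{A}\rtimes k_{G})(C_{c}(G,A))$ is dense in $\lambda(B)$, so $\psi(q^{\mathcal{R}}(C_{c}(G,A)))$ is dense in $B$ and $\psi$ has dense range.

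Next I would prove $\psi$ is a topological isomorphism. Exactly as in the proof of Proposition \ref{prop:left-regular-is-topological-embedding}, Proposition \ref{thm:isometric_realization} supplies a non-degenerate $\mathcal{R}$-continuous covariant representation $(\pi,U)$ on some $X$ whose integrated form $\Phi:=(\pi\rtimes U)^{\mathcal{R}}$ is an \emph{isometric} representation of $\crossedprod$. Applying the generating hypothesis to $(\pi,U)$ produces a non-degenerate bounded $T:B\to B(X)$ with $\overline{T}\circ k_{A}=\pi$ and $\overline{T}\circ k_{G}=U$, and Lemma \ref{lem:existence_of_generating_pair} then gives $\pi\rtimes U=\overline{T}\circ(k_{A}\rtimes k_{G})$. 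Using $\overline{T}\circ\lambda=T$ together with $\lambda\circ\psi\circ q^{\mathcal{R}}=k_{A}\rtimes k_{G}$, I obtain $\Phi\circ q^{\mathcal{R}}=T\circ\psi\circ q^{\mathcal{R}}$, whence $\Phi=T\circ\psi$ by density. Since $\Phi$ is isometric, $\|c\|^{\mathcal{R}}=\|\Phi(c)\|=\|T(\psi(c))\|\leq\|T\|\,\|\psi(c)\|$ for all $c\in\crossedprod$, so $\psi$ is bounded below. A bounded-below map has closed range, and together with the dense range from the first step this makes $\psi$ bijective with bounded inverse, i.e.\ a topological isomorphism.

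Then I would check the intertwining relations $k_{A}=\psi_{l}\circ i_{A}^{\mathcal{R}}$ and $k_{G}=\psi_{l}\circ i_{G}^{\mathcal{R}}$, for which it suffices to verify $\psi\circ i_{A}^{\mathcal{R}}(a)=k_{A}(a)\circ\psi$ and $\psi\circ i_{G}^{\mathcal{R}}(r)=k_{G}(r)\circ\psi$ on the dense set $q^{\mathcal{R}}(C_{c}(G,A))$. A short computation from $(i_{A}(a)f)(s)=af(s)$ and $(i_{G}(r)f)(s)=\alpha_{r}(f(r^{-1}s))$, using the covariance of $(k_{A},k_{G})$ and the left-invariance of Haar measure, gives $(k_{A}\rtimes k_{G})(i_{A}(a)f)=k_{A}(a)\circ(k_{A}\rtimes k_{G})(f)$ and $(k_{A}\rtimes k_{G})(i_{G}(r)f)=k_{G}(r)\circ(k_{A}\rtimes k_{G})(f)$ as operators on $B$; since each $(k_{A}\rtimes k_{G})(f)=\lambda(\psi(q^{\mathcal{R}}(f)))$ lies in $\lambda(B)$ and $k_{A}(a),k_{G}(r)\in\leftcent(B)$, the centralizer identity $L\circ\lambda(b)=\lambda(Lb)$ lets me apply $\lambda^{-1}$ and read off the two relations. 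Uniqueness follows by reversing this: if $\psi'$ is a topological isomorphism with $k_{A}=\psi_{l}'\circ i_{A}^{\mathcal{R}}$ and $k_{G}=\psi_{l}'\circ i_{G}^{\mathcal{R}}$, then pulling the bounded homomorphism $\psi_{l}'$ through the operator-valued integral defining the integrated form gives $k_{A}\rtimes k_{G}=\psi_{l}'\circ(i_{A}^{\mathcal{R}}\rtimes i_{G}^{\mathcal{R}})$; combined with $(i_{A}^{\mathcal{R}}\rtimes i_{G}^{\mathcal{R}})^{\mathcal{R}}=\lambda$ and the general identity $\psi_{l}'\circ\lambda=\lambda\circ\psi'$, this forces $\lambda\circ\psi'\circ q^{\mathcal{R}}=k_{A}\rtimes k_{G}=\lambda\circ\psi\circ q^{\mathcal{R}}$, so $\psi'=\psi$ on $q^{\mathcal{R}}(C_{c}(G,A))$ and hence everywhere.

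The main obstacle is the lower bound in the second step: boundedness and dense range of $\psi$ are formal, but there is no a priori reason for $\psi$ to be injective or to have closed range, and the only available leverage is the generating hypothesis. The crux is therefore to feed the isometric representation of Proposition \ref{thm:isometric_realization} into that hypothesis and to extract, through Lemma \ref{lem:existence_of_generating_pair}, the factorisation $\Phi=T\circ\psi$ by which the isometry of $\Phi$ is transferred into a lower bound for $\psi$. A secondary technical point deserving care is the justification in the uniqueness argument of pulling $\psi_{l}'$ through the operator-valued integral, which is handled exactly as in Lemma \ref{lem:existence_of_generating_pair} (continuous operators pass through the integral, and the integrand takes values in the SOT-closed subalgebra $\leftcent(\crossedprod)$).
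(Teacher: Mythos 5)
Your proposal is correct and follows essentially the same route as the paper: the isometric realization from Proposition \ref{thm:isometric_realization} is fed into the generating hypothesis, Lemma \ref{lem:existence_of_generating_pair} yields $\pi\rtimes U=\overline{T}\circ(k_{A}\rtimes k_{G})$, and $\psi=\lambda^{-1}\circ(k_{A}\rtimes k_{G})^{\mathcal{R}}$ is shown to be a topological isomorphism by exactly the paper's norm estimate (your "bounded below plus dense range" is the same sandwich of inequalities). The only cosmetic difference is in the intertwining step, where you verify $(k_{A}\rtimes k_{G})(i_{A}(a)f)=k_{A}(a)(k_{A}\rtimes k_{G})(f)$ directly for the pair $(k_{A},k_{G})$ rather than, as the paper does, deducing it from the corresponding identity for $(\pi,U)$ via the injectivity of $\overline{T}$ on $\lambda(B)$; both are valid and rest on the same integrated-form identity.
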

\begin{proof}
Proposition \ref{thm:isometric_realization} provides a non-degenerate
$\mathcal{R}$-continuous covariant representation $(\pi,U)$ such
that $(\pi\rtimes U)^{\mathcal{R}}$ is an isometric representation
of $\crossedprod$. If $T:B\to B(X)$ is a non-degenerate bounded
representation such that $\overline{T}\circ k_{A}=\pi$ and $\overline{T}\circ k_{G}=U$,
then Lemma \ref{lem:existence_of_generating_pair} shows that $(\overline{T}\circ k_{A})\rtimes(\overline{T}\circ k_{G})=\overline{T}\circ(k_{A}\rtimes k_{G})$,
i.e., that $\pi\rtimes U=\overline{T}\circ(k_{A}\rtimes k_{G})$.
Hence, for $f\in C_{c}(G,A)$:
\begin{eqnarray*}
\|q^{\mathcal{R}}(f)\|^{\mathcal{R}} & = & \|(\pi\rtimes U)^{\mathcal{R}}(q^{\mathcal{R}}(f))\|\\
 & = & \|\pi\rtimes U(f)\|\\
 & = & \|\overline{T}\circ(k_{A}\rtimes k_{G})(f)\|\\
 & \leq & \|\overline{T}\|\|k_{A}\rtimes k_{G}(f)\|\\
 & = & \|\overline{T}\|\|(k_{A}\rtimes k_{G})^{\mathcal{R}}(q^{\mathcal{R}}(f))\|.
\end{eqnarray*}
Since $(k_{A},k_{G})$ is $\mathcal{R}$-continuous, $\|(k_{A}\rtimes k_{G})^{\mathcal{R}}(q^{\mathcal{R}}(f))\|\leq\|(k_{A}\rtimes k_{G})^{\mathcal{R}}\|\|(q^{\mathcal{R}}(f))\|^{\mathcal{R}}$,
for all $f\in C_{c}(G,A)$, hence we can now conclude, using (2) and
(3) and the fact that $\lambda(B)$ is closed, that $(k_{A}\rtimes k_{G})^{\mathcal{R}}:\crossedprod\to\lambda(B)$
is a topological isomorphism of Banach algebras. Since $\lambda:B\to\leftcent(B)$
is assumed to be a topological embedding, 
\[
\psi:=\lambda^{-1}\circ(k_{A}\rtimes k_{G})^{\mathcal{R}}:\crossedprod\to B
\]
 is a topological isomorphism. 

We proceed to show that $\psi_{l}$ induces $k_{A}$ and $k_{G}$.
As a preparation, note that, since $(\pi\rtimes U)^{\mathcal{R}}$
is isometric and $(\pi\rtimes U)^{\mathcal{R}}=\overline{T}\circ(k_{A}\rtimes k_{G})^{\mathcal{R}}$,
the map $\overline{T}:\leftcent(B)\to B(X)$ is injective on $(k_{A}\rtimes k_{G})^{\mathcal{R}}(\crossedprod)=\lambda(B)$.
Now by \cite[Proposition 5.3]{2011arXiv1104.5151D}, for $a\in A$
and $f\in C_{c}(G,A),$ we have 
\begin{eqnarray*}
(\pi\rtimes U)^{\mathcal{R}}(i_{A}^{\mathcal{R}}(a)q^{\mathcal{R}}(f)) & = & \pi\rtimes U(i_{A}(a)f)\\
 & = & \pi(a)\pi\rtimes U(f)\\
 & = & \overline{T}\circ k_{A}(a)(\pi\rtimes U)^{\mathcal{R}}(q^{\mathcal{R}}(f))\\
 & = & \overline{T}\circ k_{A}(a)\overline{T}\circ(k_{A}\rtimes k_{G})^{\mathcal{R}}(q^{\mathcal{R}}(f))\\
 & = & \overline{T}\left(k_{A}(a)(k_{A}\rtimes k_{G})^{\mathcal{R}}(q^{\mathcal{R}}(f))\right).
\end{eqnarray*}
Since $\lambda(B)$ is a left ideal in $\leftcent(B)$ (as is the
case for every Banach algebra), we note that $k_{A}(a)(k_{A}\rtimes k_{G})^{\mathcal{R}}(q^{\mathcal{R}}(f))\in\lambda(B)$.
On the other hand, we also have 
\begin{eqnarray*}
(\pi\rtimes U)^{\mathcal{R}}(i_{A}^{\mathcal{R}}(a)q^{\mathcal{R}}(f)) & = & \overline{T}\left((k_{A}\rtimes k_{G})^{\mathcal{R}}(i_{A}^{\mathcal{R}}(a)q^{\mathcal{R}}(f))\right).
\end{eqnarray*}
Hence the injectivity of $\overline{T}$ on $\lambda(B)$ shows that
\[
k_{A}(a)(k_{A}\rtimes k_{G})^{\mathcal{R}}(q^{\mathcal{R}}(f))=(k_{A}\rtimes k_{G})^{\mathcal{R}}(i_{A}^{\mathcal{R}}(a)q^{\mathcal{R}}(f)).
\]
We now apply $\lambda^{-1}$ to both sides, and use that $\lambda^{-1}(L\circ\lambda(b))=L(b)$
for all $L\in\leftcent(B)$ and $b\in B$, to see that 
\begin{eqnarray*}
\psi(i_{A}^{\mathcal{R}}(a)q^{\mathcal{R}}(f)) & = & \lambda^{-1}\circ(k_{A}\rtimes k_{G})^{\mathcal{R}}(i_{A}^{\mathcal{R}}(a)q^{\mathcal{R}}(f))\\
 & = & \lambda^{-1}\left(k_{A}(a)(k_{A}\rtimes k_{G})^{\mathcal{R}}(q^{\mathcal{R}}(f))\right)\\
 & = & \lambda^{-1}\left(k_{A}(a)\circ\lambda\circ\lambda^{-1}\circ(k_{A}\rtimes k_{G})^{\mathcal{R}}(q^{\mathcal{R}}(f))\right)\\
 & = & \lambda^{-1}\left(k_{A}(a)\circ\lambda\circ\psi(q^{\mathcal{R}}(f))\right)\\
 & = & k_{A}(a)\psi(q^{\mathcal{R}}(f)).
\end{eqnarray*}
By density, we conclude that $\psi\circ i_{A}^{\mathcal{R}}(a)=k_{A}(a)\circ\psi$,
for all $a\in A$, i.e., that $k_{A}=\psi_{l}\circ i_{A}^{\mathcal{R}}$.
A similar argument yields $k_{G}=\psi_{l}\circ i_{G}^{\mathcal{R}}$.

As to uniqueness, suppose that $\phi:\crossedprod\to B$ is a topological
isomorphism such that $k_{A}=\phi_{l}\circ i_{A}^{\mathcal{R}}$ and
$k_{G}=\phi_{l}\circ i_{G}^{\mathcal{R}}$. Remembering that $(i_{A}^{\mathcal{R}}\rtimes i_{G}^{\mathcal{R}})^{\mathcal{R}}=\lambda$
\cite[Theorem 7.2]{2011arXiv1104.5151D}, this readily implies that,
for $f\in C_{c}(G,A)$,
\begin{eqnarray*}
\phi\circ(\lambda(q^{\mathcal{R}}(f))\circ\phi^{-1} & = & \phi\circ(i_{A}^{\mathcal{R}}\rtimes i_{G}^{\mathcal{R}})^{\mathcal{R}}(q^{\mathcal{R}}(f))\circ\phi^{-1}\\
 & = & (k_{A}\rtimes k_{G})^{\mathcal{R}}(q^{\mathcal{R}}(f)),
\end{eqnarray*}
hence 
\[
(k_{A}\rtimes k_{G})^{\mathcal{R}}(q^{\mathcal{R}}(f))\circ\phi=\phi\circ\lambda(q^{\mathcal{R}}(f)).
\]
Applying this to $q^{\mathcal{R}}(g)$, for $g\in C_{c}(G,A)$, we
find
\begin{eqnarray*}
(k_{A}\rtimes k_{G})^{\mathcal{R}}(q^{\mathcal{R}}(f))\phi(q^{\mathcal{R}}(g)) & = & \phi\left(\lambda(q^{\mathcal{R}}(f))q^{\mathcal{R}}(g)\right)\\
 & = & \phi(q^{\mathcal{R}}(f)q^{\mathcal{R}}(g))\\
 & = & \phi(q^{\mathcal{R}}(f))\phi(q^{\mathcal{R}}(g))\\
 & = & \lambda(\phi(q^{\mathcal{R}}(f)))\phi(q^{\mathcal{R}}(g)).
\end{eqnarray*}
By density, we conclude that $(k_{A}\rtimes k_{G})^{\mathcal{R}}(q^{\mathcal{R}}(f))=\lambda(\phi(q^{\mathcal{R}}(f))),$
for all $f\in C_{c}(G,A)$. Again by density, we conclude that
\[
\phi=\lambda^{-1}\circ(k_{A}\rtimes k_{G})^{\mathcal{R}}=\psi.
\]
\end{proof}

\section{\label{sec:Applications_L1_and_Beurling_algebras}Generalized Beurling
algebras as crossed products}

In this section we give sufficient conditions for a crossed product
of a Banach algebra to be topologically isomorphic to a generalized
Beurling algebra (see Definition \ref{def:generalized-Beurling}),
cf.\,Theorem \ref{thm:faithful-representation} and Corollary \ref{cor:crossedprod_is_Beulringalgebra}.
Since these conditions can always be satisfied, all generalized Beurling
algebras are topologically isomorphic to a crossed product (cf.\,Theorem
\ref{thm:Choosing-R-correctly-Crossed-Products-are-beurling}). Through
an application of the General Correspondence Theorem (Theorem \ref{thm:General-Correspondence-Theorem})
we then obtain a bijection between the non-degenerate bounded representations
of such a generalized Beurling algebra and the non-degenerate continuous
covariant representations of the Banach algebra dynamical system where
the group representation is bounded by a multiple of the weight, cf.\,Theorem
\ref{thm:continuous-non-deg-covars-are-R-continuous}. When the Banach
algebra in the Banach algebra dynamical system is taken to be the
scalars, and the weight on the group $G$ taken to be the constant
1 function, then Corollary \ref{cor:crossedprod_is_Beulringalgebra}
shows that $L^{1}(G)$ is isometrically isomorphic to a crossed product,
and Theorem \ref{thm:continuous-non-deg-covars-are-R-continuous}
reduces to the classical bijective correspondence between uniformly
bounded strongly continuous representations of $G$ and non-degenerate
bounded representations of $L^{1}(G)$, cf.\,Corollary \ref{cor:classical-L1-result}.

We start with the topological isomorphism between a generalized Beurling
algebra and a crossed product. 
\begin{defn}
For a locally compact group $G$, let $\omega:G\to[0,\infty)$ be
a non-zero submultiplicative Borel measurable function. Then $\omega$
is called a \emph{weight }on $G$. 
\end{defn}
Note that we do not assume that $\omega\geq1$, as is done in some
parts of the literature. The fact that $\omega$ is non-zero readily
implies that $\omega(e)\geq1$. More generally, if $K\subseteq G$
is a compact set, there exist $a,b>0$ such that $a\leq\omega(s)\leq b$
for all $s\in K$ \cite[Lemma 1.3.3]{Kaniuth}. 

Let $(A,G,\alpha)$ be a Banach algebra dynamical system, and $\mathcal{R}$
a uniformly bounded class of continuous covariant representations
of $(A,G,\alpha)$. Recall that $C^{\mathcal{R}}:=\sup_{(\pi,U)\in\mathcal{R}}\|\pi\|$
and $\nu^{\mathcal{R}}:G\to\mathbb{R}_{\geq0}$ is defined by $\nu^{\mathcal{R}}(r):=\sup_{(\pi,U)\in\mathcal{R}}\|U_{r}\|$
as in \cite[Definition 3.1]{2011arXiv1104.5151D}. We note that the
map $\nu^{\mathcal{R}}$ is a weight on $G$. It is clearly submultiplicative,
and, being the supremum of a family of continuous maps $\{r\mapsto\|U_{r}x\|:(\pi,U)\in\mathcal{R},\ x\in X_{\pi},\ \|x\|\leq1\}$,
the map $\nu^{\mathcal{R}}$ is lower semicontinuous, hence Borel
measurable. 

Let $\omega$ be a weight on $G$, such that $\nu^{\mathcal{R}}\leq\omega$.
Then, for all $f\in C_{c}(G,A)$, 
\begin{eqnarray}
\sigma^{\mathcal{R}}(f) & = & \sup_{(\pi,U)\in\mathcal{R}}\left\Vert \int_{G}\pi(f(s))U_{s}\, ds\right\Vert \nonumber \\
 & \leq & \sup_{(\pi,U)\in\mathcal{R}}\int_{G}\|\pi(f(s))\|\|U_{s}\|\, ds\nonumber \\
 & \leq & C^{\mathcal{R}}\int_{G}\|f(s)\|\nu^{\mathcal{R}}(s)\, ds\nonumber \\
 & \leq & C^{\mathcal{R}}\int_{G}\|f(s)\|\omega(s)\, ds\nonumber \\
 & = & C^{\mathcal{R}}\|f\|_{1,\omega},\label{eq:seminorm-bounded-by-beurlingnorm}
\end{eqnarray}
where $\|\cdot\|_{1,\omega}$ denotes the $\omega$-weighted $L^{1}$-norm.
In Theorem \ref{thm:faithful-representation}, we will give sufficient
conditions under which a reverse inequality holds. Then $\sigma^{\mathcal{R}}$
is actually a norm on $C_{c}(G,A)$ and is equivalent to a weighted
$L^{1}$-norm, so that $\crossedprod$ will be isomorphic to a generalized
Beurling algebra to be defined shortly. 
\begin{defn}
Let $X$ be a Banach space, $1\leq p<\infty$, and $\omega:G\to[0,\infty)$
a weight on $G$. We define the weighted $p$-norm on $C_{c}(G,X)$
by 
\[
\|h\|_{p,\omega}:=\left(\int_{G}\|h(s)\|^{p}\omega(s)\, ds\right)^{1/p},
\]
and define $L^{p}(G,X,\omega)$ as the completion of $C_{c}(G,X)$
with this norm.\end{defn}
\begin{rem}
By definition $L^{p}(G,A,\omega)$ with $1\leq p<\infty$ has $C_{c}(G,A)$
as a dense subspace. In view of the central role of $C_{c}(G,A)$
in our theory of crossed products of Banach algebras, this is clearly
desirable, but it would be unsatisfactory not to discuss the relation
with spaces of Bochner integrable functions. We will now address this
and explain that for $p=1$ (our main space of interest in the sequel),
$L^{1}(G,A,\omega)$ is (isometrically isomorphic to) a Bochner space. 

In most of the literature, the theory of the Bochner integral is developed
for finite (or at least $\sigma$-finite) measures, and sometimes
the Banach space in question is assumed to be separable. Since $\omega d\mu$
(where $\mu$ is the left Haar measure on $G$) need not be $\sigma$-finite
and $A$ need not be separable, this is not applicable to our situation.
In \cite[Appendix E]{Cohn}, however, the theory is developed for
an arbitrary measure $\mu$ on a $\sigma$-algebra $\mathcal{A}$
of subsets of a set $\Omega$, and functions $f:\Omega\to X$ with
values in an arbitrary Banach space $X$. Such a function $f$ is
called Bochner integrable if $f^{-1}(B)\in\mathcal{A}$ for every
Borel subset of $X$, $f(\Omega)$ is separable, and $\int_{\Omega}\|f(\xi)\|d\mu(\xi)<\infty$
(the measurability of $\xi\mapsto\|f(\xi)\|$ is an automatic consequence
of the Borel measurability of $f$). Identifying Bochner integrable
functions that are equal $\mu$-almost everywhere, one obtains a Banach
space $L^{1}(\Omega,\mathcal{A},\mu,X)$, where the norm is given
by $\|[f]\|=\int_{\Omega}\|f(\xi)\|\, d\mu(\xi)$ with $f$ any representative
of the equivalence class $[f]\in L^{1}(\Omega,\mathcal{A},\mu,X)$.
Although it is not stated as such, it is in fact proved \cite[p.\,352]{Cohn}
that the simple Bochner integrable functions (i.e., all functions
of the form $\sum_{i=1}^{n}\chi_{A_{i}}\otimes x_{i}$, where $A_{i}\in\mathcal{A}$,
$\mu(A_{i})<\infty$ and $x_{i}\in X$) form a dense subspace of $L^{1}(\Omega,\mathcal{A},\mu,X)$.

We claim that our space $L^{1}(G,A,\omega)$ is isometrically isomorphic
to $L^{1}(G,\mathcal{B},\omega d\mu,A)$, where $\mathcal{B}$ is
the Borel $\sigma$-algebra of $G$, and $\mu$ is the left Haar measure
on $G$ again. To start with, if $f\in C_{c}(G,A)$, then certainly
$f$ is Bochner integrable, so that we obtain a (clearly isometric)
inclusion map $j:C_{c}(G,A)\to L^{1}(G,\mathcal{B},\omega d\mu,A)$,
that can be extended to an isometric embedding of $L^{1}(G,A,\omega)$
into $L^{1}(G,\mathcal{B},\omega d\mu,A)$. To see that the image
is dense, it is, in view of the density of the simple Bochner integrable
functions in $L^{1}(G,\mathcal{B},\omega d\mu,A)$, sufficient to
approximate $\chi_{S}\otimes a$ with elements from $C_{c}(G,A),$
for arbitrary $a\in A$ and $S\in\mathcal{B}$ with $\int_{G}\chi_{S}\:\omega d\mu<\infty$.
Since $C_{c}(G)$ is dense in $L^{1}(G,\omega d\mu)$ \cite[Lemma 1.3.5]{Kaniuth},
we can choose a sequence $(f_{n})\subseteq C_{c}(G)$ such that $f_{n}\to\chi_{S}$
in $L^{1}(G,\omega d\mu)$, and then 
\[
\|f_{n}\otimes a-\chi_{S}\otimes a\|=\|a\|\int_{G}|f_{n}(r)-\chi_{S}(r)|\omega(r)\, d\mu(r)\to0.
\]
Hence the image of $j:C_{c}(G,A)\to L^{1}(G,\mathcal{B},\omega d\mu,A)$
is dense and our claim has been established. 

For the sake of completeness, we note that one cannot argue that $\omega d\mu$
is ``clearly'' a regular Borel measure on $G$, so that $C_{c}(G)$
is dense in $L^{1}(G,\omega d\mu)$ by the standard density result
\cite[Proposition 7.9]{Folland}. Indeed, although \cite[Exercises 7.2.7--9]{Folland}
give sufficient conditions for this to hold (none of which applies
in our general case), the regularity is not automatic, see \cite[Exercise 7.2.13]{Folland}.
The proof of the density of $C_{c}(G)$ in $L^{1}(G,\omega d\mu)$
in \cite[ Lemma 1.3.5]{Kaniuth} is direct and from first principles.
It uses in an essential manner that $\omega$ is bounded away from
zero on compact subsets of $G$, but not that the Haar measure should
be $\sigma$-finite or that $\omega$ should be integrable.
\end{rem}
{}

With $(A,G,\alpha)$ a Banach algebra dynamical system and $\omega$
a weight on $G$, if $\alpha$ is uniformly bounded, say $\|\alpha_{s}\|\leq C_{\alpha}$
for some $C_{\alpha}\geq0$ and all $s\in G$, then, using the submultiplicativity
of $\omega$, it is routine to verify that 
\[
\|f*g\|_{1,\omega}\leq C_{\alpha}\|f\|_{1,\omega}\|g\|_{1,\omega}\quad(f,g\in C_{c}(G,A)).
\]
Hence the Banach space $L^{1}(G,A,\omega)$ can be supplied with the
structure of an associative algebra, such that 

\[
\|u*v\|_{1,\omega}\leq C_{\alpha}\|u\|_{1,\omega}\|v\|_{1,\omega}\quad(u,v\in L^{1}(G,A,\omega)).
\]
If $C_{\alpha}=1$ (i.e., if $\alpha$ lets $G$ act as isometries
on $A$), then $L^{1}(G,A,\omega)$ is a Banach algebra, and when
$C_{\alpha}\neq1$, as is well known, there is an equivalent norm
on $L^{1}(G,A,\omega)$ such that it becomes a Banach algebra. We
will show below (cf.\,Theorem \ref{thm:Choosing-R-correctly-Crossed-Products-are-beurling})
that such a norm can always be obtained from a topological isomorphism
between $L^{1}(G,A,\omega)$ and the crossed product $\crossedprod$
for a suitable choice of $\mathcal{R}$.
\begin{defn}
\global\long\def\BeurlingTypeAlg{L^{1}(G,A,\omega;\alpha)}
\label{def:generalized-Beurling}Let $(A,G,\alpha)$ be a Banach algebra
dynamical system with $\alpha$ uniformly bounded and $\omega$ a
weight on $G$. The Banach space $L^{1}(G,A,\omega)$ endowed with
the continuous multiplication induced by the twisted convolution on
$C_{c}(G,A)$, given by 
\[
[f*g](s):=\int_{G}f(r)\alpha_{r}(g(r^{-1}s))\, dr\quad(f,g\in C_{c}(G,A),\ s\in G),
\]
will be denoted by $\BeurlingTypeAlg$ and called a \emph{generalized
Beurling algebra.}
\end{defn}
As a special case, we note that for $A=\mathbb{K}$, the generalized
Beurling algebra reduces to the classical Beurling algebra $L^{1}(G,\omega)$,
which is a true Banach algebra.

{}

Obtaining such a reverse inequality to (\ref{eq:seminorm-bounded-by-beurlingnorm})
rests on inducing a covariant representation of $(A,G,\alpha)$ from
the left regular representation $\lambda:A\to B(A)$ of $A$, analogous
to \cite[Example 2.14]{Williams}. The key result is Proposition \ref{pro:reverseinequality}
and we will now start working towards it.
\begin{defn}
\label{def:induced_rep_and_translation_rep}Let $(A,G,\alpha)$ be
a Banach algebra dynamical system and let $\pi:A\to B(X)$ be a bounded
representation of $A$ on a Banach space $X$. We define the induced
algebra representation $\tilde{\pi}$ and left regular group representation
$\Lambda$ on the space of all functions from $G$ to $X$ by the
formulae:
\begin{eqnarray*}
[\tilde{\pi}(a)h](s) & := & \pi(\alpha_{s}^{-1}(a))h(s),\\
(\Lambda_{r}h)(s) & := & h(r^{-1}s),
\end{eqnarray*}
where $h:G\to X$, $r,s\in G$ and $a\in A$. 
\end{defn}
A routine calculation, left to the reader, shows that $(\tilde{\pi},\Lambda)$
is covariant.

We need a number of lemmas in preparation for the proof of Proposition
\ref{pro:reverseinequality}. 

The following is clear.
\begin{lem}
\label{lem:tilde_lambda_bounded}If $(A,G,\alpha)$ is a Banach algebra
dynamical system with $\alpha$ uniformly bounded by a constant $C_{\alpha}$,
and $\omega:G\to[0,\infty)$ a weight, then for any bounded representation
$\pi:A\to B(X)$ on a Banach space $X$, both the maps $\tilde{\pi}:A\to B(C_{0}(G,X))$
\textup{(}as defined in Definition \ref{def:induced_rep_and_translation_rep}\textup{)}
and $\tilde{\pi}:A\to B(L^{p}(G,X,\omega))$ for $1\leq p<\infty$
\textup{(}the canonically induced representation $\tilde{\pi}$ of
$A$ on $L^{p}(G,X,\omega)$ as completion of $C_{c}(G,X)$ with the
$\|\cdot\|_{p,\omega}$-norm\textup{)} are representations with norms
bounded by $C_{\alpha}\|\pi\|$. Moreover, $C_{c}(G,X)$ is invariant
under both $A$-actions.
\end{lem}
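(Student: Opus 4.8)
The plan is to verify the claim on the dense subspace $C_c(G,X)$ and then extend by continuity. First I would check that $\tilde\pi$ is an algebra representation: for $a,b\in A$, a function $h$ on $G$ and $s\in G$, one has $[\tilde\pi(a)\tilde\pi(b)h](s)=\pi(\alpha_s^{-1}(a))\pi(\alpha_s^{-1}(b))h(s)$, and since $\alpha_s^{-1}=\alpha_{s^{-1}}$ is an algebra automorphism of $A$ and $\pi$ is multiplicative, this equals $\pi(\alpha_s^{-1}(ab))h(s)=[\tilde\pi(ab)h](s)$; linearity in $a$ is immediate from linearity of $\pi$ and of $\alpha_s^{-1}$.

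Next I would establish the pointwise norm estimate, which drives both operator bounds. Since $\alpha$ is uniformly bounded by $C_\alpha$, we have $\|\alpha_s^{-1}(a)\|\leq C_\alpha\|a\|$ for every $s\in G$, whence $\|[\tilde\pi(a)h](s)\|\leq\|\pi\|\,\|\alpha_s^{-1}(a)\|\,\|h(s)\|\leq C_\alpha\|\pi\|\,\|a\|\,\|h(s)\|$ for all $s$. Taking the supremum over $s$ gives $\|\tilde\pi(a)h\|_\infty\leq C_\alpha\|\pi\|\,\|a\|\,\|h\|_\infty$; raising the pointwise estimate to the $p$-th power, multiplying by $\omega(s)$ and integrating gives $\|\tilde\pi(a)h\|_{p,\omega}\leq C_\alpha\|\pi\|\,\|a\|\,\|h\|_{p,\omega}$. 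In both cases this yields the operator-norm bound $\|\tilde\pi(a)\|\leq C_\alpha\|\pi\|\,\|a\|$, hence $\|\tilde\pi\|\leq C_\alpha\|\pi\|$.

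For the invariance of $C_c(G,X)$ I would observe that $\tilde\pi(a)h$ is supported inside $\mathrm{supp}(h)$, so only continuity needs checking. The strong continuity of $\alpha$ together with the continuity of inversion on $G$ shows that $s\mapsto\alpha_{s^{-1}}(a)$ is norm-continuous from $G$ into $A$, and composing with the bounded map $\pi$ produces a norm-continuous map $s\mapsto\pi(\alpha_{s^{-1}}(a))$ from $G$ into $B(X)$. As the evaluation map $B(X)\times X\to X$ is jointly continuous, the product $s\mapsto\pi(\alpha_{s^{-1}}(a))h(s)$ is continuous, so $\tilde\pi(a)h\in C_c(G,X)$.

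Finally, since $C_c(G,X)$ is dense in $C_0(G,X)$ for the sup norm and in $L^p(G,X,\omega)$ by definition of the latter as a completion, each operator $\tilde\pi(a)$ bounded on $C_c(G,X)$ with the respective norm extends uniquely to a bounded operator of the same norm on the completion, and the representation identities pass to the extensions by continuity. The only point requiring any genuine care is keeping the image inside $C_c(G,X)$, i.e.\ the continuity of $s\mapsto\pi(\alpha_{s^{-1}}(a))h(s)$, which is exactly where the strong continuity of $\alpha$ is used; the remaining steps are direct estimates and a density argument, consistent with the statement being flagged as clear.
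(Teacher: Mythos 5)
Your proof is correct and is exactly the routine verification the paper has in mind (the paper simply declares the lemma clear and gives no proof): the pointwise bound $\|\alpha_{s^{-1}}(a)\|\leq C_{\alpha}\|a\|$ yields both operator norm estimates, and the continuity of $s\mapsto\pi(\alpha_{s^{-1}}(a))h(s)$ gives the invariance of $C_{c}(G,X)$. Nothing is missing.
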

{}
\begin{lem}
If $(A,G,\alpha)$ is a Banach algebra dynamical system and $X$ a
Banach space and $\omega$ a weight on $G$, then both the left regular
representations $\Lambda:G\to B(C_{0}(G,X))$ \textup{(}as defined
in Definition \ref{def:induced_rep_and_translation_rep}\textup{)},
and $\Lambda:G\to B(L^{p}(G,X,\omega))$ for $1\leq p<\infty$ \textup{(}the
canonically induced representation $\Lambda$ of $G$ on $L^{p}(G,X,\omega)$
as completion of $C_{c}(G,X)$ with the $\|\cdot\|_{p,\omega}$-norm\textup{)}
are strongly continuous group representations. The representation
$\Lambda:G\to B(C_{0}(G,X))$ acts as isometries, and $\Lambda:G\to B(L^{p}(G,X,\omega))$
is bounded by $\omega^{1/p}$. Moreover, $C_{c}(G,X)$ is invariant
under both $G$-actions.\end{lem}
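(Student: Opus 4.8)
The plan is to dispatch the algebraic and norm-theoretic assertions by direct computation and to reserve the real work for strong continuity, which I would prove first on the dense subspace $C_c(G,X)$ and then extend. For the homomorphism property, from $(\Lambda_r\Lambda_t h)(s)=(\Lambda_t h)(r^{-1}s)=h(t^{-1}r^{-1}s)=h((rt)^{-1}s)$ I read off $\Lambda_r\Lambda_t=\Lambda_{rt}$, while clearly $\Lambda_e=\mathrm{id}$. Invariance of $C_c(G,X)$ is immediate since $\Lambda_r h$ is continuous as a composition and its support is $r\,\mathrm{supp}(h)$, again compact. The isometry claim for $C_0(G,X)$ follows from $\|\Lambda_r h\|_\infty=\sup_s\|h(r^{-1}s)\|=\|h\|_\infty$, and the weighted $L^p$-bound from substituting $t=r^{-1}s$, using left invariance of Haar measure and submultiplicativity of $\omega$:
\[
\|\Lambda_r h\|_{p,\omega}^p=\int_G\|h(r^{-1}s)\|^p\omega(s)\,ds=\int_G\|h(t)\|^p\omega(rt)\,dt\leq\omega(r)\|h\|_{p,\omega}^p,
\]
so that $\|\Lambda_r\|\leq\omega(r)^{1/p}$; in particular each $\Lambda_r$ is bounded, since $\omega$ is bounded on compact sets.

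The substantive part is strong continuity, which I expect to be the main obstacle. First I would reduce it to continuity at the identity: for fixed $r_0\in G$ and any $h$ in the relevant space one has $\Lambda_r h-\Lambda_{r_0}h=\Lambda_{r_0}(\Lambda_{r_0^{-1}r}h-h)$, and because $\|\Lambda_{r_0}\|$ is finite (equal to $1$ in the $C_0$ case, at most $\omega(r_0)^{1/p}$ in the $L^p$ case), it suffices to establish that $\|\Lambda_s h-h\|\to0$ as $s\to e$. This last step requires only pointwise finiteness of the operator norm, which I already have.

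On the dense subspace $C_c(G,X)$ I would obtain continuity at $e$ from the uniform continuity of compactly supported continuous functions: given $\varepsilon>0$ there is a compact symmetric neighborhood $V$ of $e$ with $\sup_{t}\|h(s^{-1}t)-h(t)\|<\varepsilon$ for all $s\in V$. In the $C_0$-norm this is exactly $\|\Lambda_s h-h\|_\infty<\varepsilon$. For the weighted $L^p$-norm I would note that, for $s\in V$, the integrand $\|h(s^{-1}t)-h(t)\|^p\omega(t)$ is supported in the fixed compact set $K'=V\,\mathrm{supp}(h)\cup\mathrm{supp}(h)$, on which $\omega$ is bounded by some $b$; hence $\|\Lambda_s h-h\|_{p,\omega}^p\leq b\,\mu(K')\,\|\Lambda_s h-h\|_\infty^p\to0$, where $\mu(K')<\infty$.

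Finally I would extend continuity at $e$ from $C_c(G,X)$ to the whole space by a standard $\varepsilon/3$ argument, using that $\Lambda_s$ is uniformly bounded for $s\in V$ (by $1$, respectively by $\sup_{s\in V}\omega(s)^{1/p}<\infty$). For $g$ in the space and $\varepsilon>0$, choosing $h\in C_c(G,X)$ with $\|g-h\|$ small gives $\|\Lambda_s g-g\|\leq\|\Lambda_s\|\,\|g-h\|+\|\Lambda_s h-h\|+\|h-g\|$, and each summand is controlled for $s$ in a suitable neighborhood of $e$. Combined with the reduction above, this yields strong continuity of $\Lambda$ on both $C_0(G,X)$ and $L^p(G,X,\omega)$, completing the proof.
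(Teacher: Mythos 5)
Your proposal is correct and follows essentially the same route as the paper: the same change-of-variables computation with left invariance and submultiplicativity for the bound $\|\Lambda_s\|\leq\omega(s)^{1/p}$, and strong continuity obtained by checking continuity at $e$ on the dense subspace $C_c(G,X)$ via uniform continuity, with the integrand supported in a fixed compact set on which $\omega$ is bounded. The only cosmetic difference is that the paper delegates the reduction to continuity at $e$ on a dense subset to a cited corollary of the earlier paper, whereas you write out that standard $\varepsilon/3$ argument explicitly.
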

\begin{proof}
That $\Lambda:G\to B(C_{0}(G,X))$ acts on $C_{0}(G,X)$ as isometries
is clear. 

That $\Lambda:G\to B(L^{p}(G,X,\omega))$ is bounded by $\omega^{1/p}$
follows from left invariance of the Haar measure and submultiplicativity
of $\omega$: For any $h\in C_{c}(G,X)$ and $s\in G$,
\begin{eqnarray*}
\|\Lambda_{s}h\|_{p,\omega}^{p} & = & \int_{G}\|[\Lambda_{s}h](t)\|^{p}\omega(t)\, dt\\
 & = & \int_{G}\|h(s^{-1}t)\|^{p}\omega(t)\, dt\\
 & = & \int_{G}\|h(t)\|^{p}\omega(st)\, dt\\
 & \leq & \omega(s)\int_{G}\|h(t)\|^{p}\omega(t)\, dt\\
 & = & \omega(s)\|h\|_{p,\omega}^{p}.
\end{eqnarray*}
Therefore $\Lambda_{s}$ induces a map on $L^{p}(G,X,\omega)$ with
the same norm, denoted by the same symbol, and $\|\Lambda_{s}\|\leq\omega(s)^{1/p}$.

To establish strong continuity of $\Lambda:G\to B(C_{0}(G,X))$ and
$\Lambda:G\to B(L^{p}(G,X,\omega))$, it is sufficient to establish
strong continuity at $e\in G$ on dense subsets of both $C_{0}(G,X)$
of $L^{p}(G,X,\omega)$ respectively \cite[Corollary 2.5]{2011arXiv1104.5151D}.
By the uniform continuity of elements in $C_{c}(G,X)$ \cite[Lemma 1.88]{Williams}
and the density of $C_{c}(G,X)$ in $C_{0}(G,X)$, the result follows
for $\Lambda:G\to B(C_{0}(G,X))$.

To establish the result for $L^{p}(G,X,\omega)$, let $\varepsilon>0$
and $h\in C_{c}(G,X)$ be arbitrary. Let $K:=\mbox{supp}(h)$ and
$W$ a precompact neighbourhood of $e\in G$. By uniform continuity
of $h$, there exists a symmetric neighbourhood $V\subseteq W$ of
$e\in G$ such that $\|\Lambda_{s}h-h\|_{\infty}^{p}<\varepsilon^{p}/\left(\sup_{r\in WK}\omega(r)\right)\mu(WK)$
for all $s\in V$. Then, for $s\in V$, 
\begin{eqnarray*}
\|\Lambda_{s}h-h\|_{p,\omega}^{p} & = & \int_{WK}\|h(s^{-1}r)-h(r)\|^{p}\omega(r)\, dr\\
 & \leq & \frac{\varepsilon^{p}}{\left(\sup_{r\in WK}\omega(r)^{p}\right)\mu(WK)}\int_{WK}\omega(r)\, dr\\
 & \leq & \varepsilon^{p}.
\end{eqnarray*}
By the density of $C_{c}(G,X)$ in $L^{p}(G,X,\omega)$, the result
follows.\end{proof}
\begin{lem}
Let $(A,G,\alpha)$ be a Banach algebra dynamical system where $\alpha$
is uniformly bounded by a constant $C_{\alpha}$ and $\omega$ a weight
on $G$. Let $\pi:A\to B(X)$ be a non-degenerate bounded representation
on a Banach space $X$. If $f\in C_{c}(G,X)$, then there exist a
compact subset $K$ of $G$, containing $\textup{supp}(f)$, and a
sequence $(f_{n})\subseteq\textup{span}\,(\tilde{\pi}(A)(C_{c}(G)\otimes X))$
such that $\textup{supp}(f_{n})\subseteq K$ for all $n$, and $(f_{n})$
converges uniformly to $f$ on $G$. Consequently the representations
$\tilde{\pi}:A\to B(C_{0}(G,X))$ and $\tilde{\pi}:A\to B(L^{p}(G,X,\omega))$
for $1\leq p<\infty$ \textup{(}as yielded by Definition \ref{def:induced_rep_and_translation_rep}\textup{)}
are then non-degenerate.\end{lem}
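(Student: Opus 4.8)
The plan is to prove the uniform-approximation statement by a local-to-global argument and then read off both non-degeneracy assertions as easy corollaries. The one structural feature that makes everything work is that $\tilde{\pi}(a)$ acts by pointwise multiplication by the operator $\pi(\alpha_s^{-1}(a))$; consequently, for any scalar cut-off $\phi\in C_c(G)$ and any $g\in C_c(G)$, $x\in X$, one has $\phi\cdot\tilde{\pi}(a)(g\otimes x)=\tilde{\pi}(a)((\phi g)\otimes x)$, so multiplying an element of $\mathrm{span}\,(\tilde{\pi}(A)(C_c(G)\otimes X))$ by a scalar function of compact support keeps it inside that span. I would fix once and for all a compact set $K$ that is a neighbourhood of $\mathrm{supp}(f)$ and arrange all approximants to be supported in $K$.

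For the local step, fix $\varepsilon>0$ and a point $s_0\in\mathrm{supp}(f)$. Since $\alpha_{s_0}^{-1}$ is an automorphism of $A$, we have $\pi(\alpha_{s_0}^{-1}(A))X=\pi(A)X$, whose span is dense by non-degeneracy of $\pi$; hence I can choose $a_1,\dots,a_m\in A$ and $x_1,\dots,x_m\in X$ with $\|f(s_0)-\sum_j\pi(\alpha_{s_0}^{-1}(a_j))x_j\|<\varepsilon/3$. The function $G_{s_0}(s):=\sum_j\pi(\alpha_s^{-1}(a_j))x_j$ is continuous, because $\alpha$ is strongly continuous and $\pi$ is bounded, so $s\mapsto\pi(\alpha_s^{-1}(a_j))$ is even norm-continuous. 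Combining the continuity of $G_{s_0}$ and of $f$ with the estimate at $s_0$, a standard $\varepsilon/3$ argument yields an open neighbourhood $V_{s_0}\subseteq\mathrm{int}\,K$ of $s_0$ on which $\|G_{s_0}(s)-f(s)\|<\varepsilon$. Covering the compact set $\mathrm{supp}(f)$ by finitely many such $V_1,\dots,V_N$ (with centres $s_1,\dots,s_N$) and multiplying each $G_{s_i}$ by a cut-off $g_i\in C_c(G)$ supported in $K$ and equal to $1$ on the relevant piece, I obtain $F_i\in\mathrm{span}\,(\tilde{\pi}(A)(C_c(G)\otimes X))$ agreeing with $G_{s_i}$ where it will matter.

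For the gluing step, choose a partition of unity $\phi_1,\dots,\phi_N\in C_c(G)$ subordinate to $\{V_i\}$ with $0\le\phi_i$, $\mathrm{supp}\,\phi_i\subseteq V_i$, $\sum_i\phi_i\le1$, and $\sum_i\phi_i=1$ on $\mathrm{supp}(f)$, and set $f_\varepsilon:=\sum_i\phi_i F_i$. By the commuting remark above, $f_\varepsilon=\sum_{i,j}\tilde{\pi}(a_{i,j})((\phi_i g_i)\otimes x_{i,j})$ lies in $\mathrm{span}\,(\tilde{\pi}(A)(C_c(G)\otimes X))$ and is supported in $K$. The estimate $\|f-f_\varepsilon\|_\infty\le\varepsilon$ is then checked by cases: for $s\in\mathrm{supp}(f)$ one writes $f(s)=\sum_i\phi_i(s)f(s)$ and uses $\|f(s)-F_i(s)\|<\varepsilon$ on $\mathrm{supp}\,\phi_i\subseteq V_i$; for $s\notin\mathrm{supp}(f)$ one has $f(s)=0$ and bounds $\|f_\varepsilon(s)\|\le\sum_i\phi_i(s)\|F_i(s)\|<\varepsilon$ using $\|G_{s_i}(s)\|<\varepsilon$ on $V_i$ together with $\sum_i\phi_i\le1$. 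Taking $\varepsilon=1/n$ produces the required sequence $(f_n)$.

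Finally, the two non-degeneracy statements follow painlessly. Uniform convergence of $f_n\to f$ with all supports in the fixed compact $K$ is convergence in the sup-norm, hence in $C_0(G,X)$; and since $\omega$ is bounded on $K$ and $\mu(K)<\infty$, it also gives $\|f_n-f\|_{p,\omega}\to0$. As $C_c(G,X)$ is dense in both $C_0(G,X)$ and $L^p(G,X,\omega)$, and we have just placed it inside the closure of $\mathrm{span}\,(\tilde{\pi}(A)\cdot)$ in each space, non-degeneracy of $\tilde{\pi}$ in both cases follows. I expect the only genuinely delicate point to be the bookkeeping in the gluing step — keeping the glued approximant inside the span (via the scalar-commuting identity) while simultaneously controlling the error off $\mathrm{supp}(f)$ and the supports inside $K$; the observation that the group twist in $\tilde{\pi}$ can be undone pointwise by the automorphism $\alpha_{s_0}^{-1}$ is what makes the local approximation available at all.
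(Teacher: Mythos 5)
Your proof is correct and follows essentially the same route as the paper's: pointwise approximation of $f(s)$ using the non-degeneracy of $\pi$ with the twist undone by the automorphism $\alpha_s^{-1}$, an error estimate via strong continuity of $\alpha$, and a partition of unity subordinate to a cover of $\mathrm{supp}(f)$ inside a fixed precompact neighbourhood, after which non-degeneracy on $C_0(G,X)$ and $L^p(G,X,\omega)$ follows from uniform convergence with uniformly compact supports. The only cosmetic difference is that you package the local approximant as $G_{s_0}(s)=\sum_j\pi(\alpha_s^{-1}(a_j))x_j$ and invoke norm-continuity of $s\mapsto\pi(\alpha_s^{-1}(a))$, where the paper inserts $\alpha_{s_j}(a_{i,s_j})$ into $\tilde{\pi}$ and controls $\|a_{i,s}-\alpha_t^{-1}\circ\alpha_s(a_{i,s})\|$ directly.
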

\begin{proof}
Let $f\in C_{c}(G,X)$ and $\varepsilon>0$ be arbitrary. Since $\mbox{supp}(f)$
is compact, we can fix some precompact open set $U_{f}$ containing
$\mbox{supp}(f)$. Since $\pi$ is non-degenerate, for every $s\in G$,
there exist finite sets $\{a_{i,s}\}_{i=1}^{n_{s}}$ and $\{x_{i,s}\}_{i=1}^{n_{s}}$
such that $\|f(s)-\sum_{i=1}^{n_{s}}\pi(a_{i,s})x_{i,s}\|<\varepsilon$.
Since $\alpha$ is strongly continuous, for each $s\in G$ and $i\in\{1,\ldots,n_{s}\}$,
there exists some precompact neighbourhood $W_{i,s}$ of $s$, such
that $t\in W_{i,s}$ implies $n_{s}\|\pi\|\|x_{i,s}\|\|a_{i,s}-\alpha_{t}^{-1}\circ\alpha_{s}(a_{i,s})\|<\varepsilon$.
Furthermore, for any $s\in G$, we can choose a precompact neighbourhood
$V_{s}$ of $s$ such that $t\in V_{s}$ implies $\|f(s)-f(t)\|<\varepsilon$.
Define $W_{s}:=\bigcap_{i=1}^{n_{s}}W_{i,s}\cap V_{s}\cap U_{f}$.
Now $\{W_{s}\}_{s\in G}$ is an open cover of $\mbox{supp}(f)$, hence
let $\{W_{s_{j}}\}_{j=1}^{m}$ be a finite subcover. Let $\{u_{j}\}_{j=1}^{m}\subseteq C_{c}(G)$
be a partition of unity such that, for all $j\in\{1,\ldots,m\}$,
$0\leq u_{j}(t)\leq1$ for $t\in G$, $\mbox{supp}(u_{j})\subseteq W_{s_{j}}$,
$\sum_{j=1}^{m}u_{j}(t)=1$ for $t\in\mbox{supp}(f)$, and $\sum_{j=1}^{m}u_{j}(t)\leq1$
for $t\in G$. Then, for $t\in G$, 
\begin{eqnarray*}
 &  & \left\Vert f(t)-\left(\sum_{j=1}^{m}\sum_{i=1}^{n_{s_{j}}}\tilde{\pi}(\alpha_{s_{j}}(a_{i,s_{j}}))u_{j}\otimes x_{i,s_{j}}\right)(t)\right\Vert \\
 & = & \left\Vert f(t)-\sum_{j=1}^{m}\sum_{i=1}^{n_{s_{j}}}u_{j}(t)\pi(\alpha_{t}^{-1}\circ\alpha_{s_{j}}(a_{i,s_{j}}))x_{i,s_{j}}\right\Vert \\
 & = & \left\Vert \sum_{j=1}^{m}u_{j}(t)f(t)-\sum_{j=1}^{m}u_{j}(t)f(s_{j})+\sum_{j=1}^{m}u_{j}(t)f(s_{j})-\sum_{j=1}^{m}u_{j}(t)\sum_{i=1}^{n_{s_{j}}}\pi(a_{i,s_{j}})x_{i,s_{j}}\right.\\
 &  & \quad\left.+\sum_{j=1}^{m}u_{j}(t)\sum_{i=1}^{n_{s_{j}}}\pi(a_{i,s_{j}})x_{i,s_{j}}-\sum_{j=1}^{m}\sum_{i=1}^{n_{s_{j}}}u_{j}(t)\pi(\alpha_{t}^{-1}\circ\alpha_{s_{j}}(a_{i,s_{j}}))x_{i,s_{j}}\right\Vert \\
 & \leq & \sum_{j=1}^{m}u_{j}(t)\left\Vert f(t)-f(s_{j})\right\Vert +\sum_{j=1}^{m}u_{j}(t)\left\Vert f(s_{j})-\sum_{i=1}^{n_{s_{j}}}\pi(a_{i,s_{j}})x_{i,s_{j}}\right\Vert \\
 &  & \quad+\left\Vert \sum_{j=1}^{m}u_{j}(t)\sum_{i=1}^{n_{s_{j}}}\pi(a_{i,s_{j}})x_{i,s_{j}}-\sum_{j=1}^{m}\sum_{i=1}^{n_{s_{j}}}u_{j}(t)\pi(\alpha_{t}^{-1}\circ\alpha_{s_{j}}(a_{i,s_{j}}))x_{i,s_{j}}\right\Vert \\
 & \leq & \varepsilon+\varepsilon+\left\Vert \sum_{j=1}^{m}u_{j}(t)\sum_{i=1}^{n_{s_{j}}}\pi(a_{i,s_{j}})x_{i,s_{j}}-\sum_{j=1}^{m}\sum_{i=1}^{n_{s_{j}}}u_{j}(t)\pi(\alpha_{t}^{-1}\circ\alpha_{s_{j}}(a_{i,s_{j}}))x_{i,s_{j}}\right\Vert \\
 & \leq & \varepsilon+\varepsilon+\sum_{j=1}^{m}u_{j}(t)\sum_{i=1}^{n_{s_{j}}}\left\Vert \pi(a_{i,s_{j}})x_{i,s_{j}}-\pi(\alpha_{t}^{-1}\circ\alpha_{s_{j}}(a_{i,s_{j}}))x_{i,s_{j}}\right\Vert \\
 & \leq & \varepsilon+\varepsilon+\sum_{j=1}^{m}u_{j}(t)\sum_{i=1}^{n_{s_{j}}}\|\pi\|\|x_{i,s_{j}}\|\|a_{i,s_{j}}-\alpha_{t}^{-1}\circ\alpha_{s_{j}}(a_{i,s_{j}})\|\\
 & \leq & \varepsilon+\varepsilon+\sum_{j=1}^{m}u_{j}(t)\sum_{i=1}^{n_{s_{j}}}\frac{\varepsilon}{n_{s_{j}}}\\
 & \leq & \varepsilon+\varepsilon+\varepsilon.
\end{eqnarray*}
Since 
\[
\sum_{j=1}^{m}\sum_{i=1}^{n_{s_{j}}}\tilde{\pi}(\alpha_{s_{j}}(a_{i,s_{j}}))u_{j}\otimes x_{i,s_{j}}
\]
is supported in the fixed compact set $\overline{U_{f}}$, the result
follows.\end{proof}
Combining the previous three lemmas yields:
\begin{cor}
\label{cor:pitilde-Lambda-covariant-non-deg}If $(A,G,\alpha)$ is
a Banach algebra dynamical system with $\alpha$ uniformly bounded
by a constant $C_{\alpha}$, $\omega$ a weight on $G$ and $\pi:A\to B(X)$
a bounded representation on a Banach space $X$, then the pair $(\tilde{\pi},\Lambda)$
\textup{(}as yielded by Definition \ref{def:induced_rep_and_translation_rep}\textup{)}
is a continuous covariant representation of $(A,G,\alpha)$ on $C_{0}(G,X)$
or \textup{$L^{p}(G,X,\omega)$ }for $1\leq p<\infty$ respectively.
Moreover: 
\begin{enumerate}
\item Both representations $\tilde{\pi}:A\to B(C_{0}(G,X))$ and $\tilde{\pi}:A\to B(L^{p}(G,X,\omega))$
satisfy $\|\tilde{\pi}\|\leq C_{\alpha}\|\pi\|$.
\item The left regular group representation $\Lambda:G\to B(C_{0}(G,X))$
acts as isometries on $C_{0}(G,X)$, and the left regular group representation
$\Lambda:G\to B(L^{p}(G,X,\omega))$ is bounded by $\omega^{1/p}$
on $G$.
\item The space $C_{c}(G,X)$, seen as a subspace of $C_{0}(G,X)$ or $L^{p}(G,X,\omega)$,
is invariant under actions of both $A$ and $G$ on $C_{0}(G,X)$
or $L^{p}(G,X,\omega)$ through the representations $\tilde{\pi}:A\to B(C_{0}(G,X))$
and $\Lambda:G\to B(C_{0}(G,X))$, or $\tilde{\pi}:A\to B(L^{p}(G,X,\omega))$
and $\Lambda:G\to B(L^{p}(G,X,\omega))$, respectively.
\item If $\pi:A\to B(X)$ is non-degenerate, then so are the representations
$\tilde{\pi}:A\to B(C_{0}(G,X))$ and $\tilde{\pi}:A\to B(L^{p}(G,X,\omega))$.
\end{enumerate}
\end{cor}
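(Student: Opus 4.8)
The plan is to assemble this corollary directly from the three preceding lemmas together with the covariance already noted just after Definition \ref{def:induced_rep_and_translation_rep}. Recall that a continuous covariant representation is precisely a covariant pair $(\tilde\pi,\Lambda)$ for which $\tilde\pi$ is norm-bounded and $\Lambda$ is strongly continuous; so the first task is to verify these three structural properties, and then items (1)--(4) should fall out as restatements of the individual lemmas, handled in parallel for the two target spaces $C_0(G,X)$ and $L^p(G,X,\omega)$.

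First I would invoke the routine calculation recorded immediately after Definition \ref{def:induced_rep_and_translation_rep} to note that $(\tilde\pi,\Lambda)$ is covariant on both $C_0(G,X)$ and $L^p(G,X,\omega)$, since the covariance identity involves only the pointwise formulae defining $\tilde\pi$ and $\Lambda$ and is insensitive to the ambient norm. Next, the norm-boundedness of $\tilde\pi$ on each space, with the bound $\|\tilde\pi\|\le C_\alpha\|\pi\|$, is exactly the content of Lemma \ref{lem:tilde_lambda_bounded}, which simultaneously establishes item (1). Strong continuity of $\Lambda$ on both spaces, together with the operator bounds (isometric on $C_0(G,X)$, bounded by $\omega^{1/p}$ on $L^p(G,X,\omega)$), is the content of the second of the preceding lemmas, which also yields item (2). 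Combining these facts gives that $(\tilde\pi,\Lambda)$ is a continuous covariant representation on each of the two spaces.

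The remaining items are then immediate. Item (3), the invariance of $C_c(G,X)$ under both the $A$-action and the $G$-action, is the \emph{Moreover} clause of each of the first two preceding lemmas. Item (4), the non-degeneracy of $\tilde\pi$ on $C_0(G,X)$ and on $L^p(G,X,\omega)$ whenever $\pi$ is non-degenerate, is precisely the conclusion of the third preceding lemma. I expect no genuine obstacle here: the corollary is a packaging result, and the only point requiring attention is the bookkeeping of checking that the definition of a continuous covariant representation is matched ingredient by ingredient, and that each of the two completions $C_0(G,X)$ and $L^p(G,X,\omega)$ is covered in parallel by the cited lemmas, as indeed it is.
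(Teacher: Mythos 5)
Your proposal is correct and matches the paper exactly: the paper's entire proof is the sentence ``Combining the previous three lemmas yields,'' and your assembly of the covariance remark after Definition \ref{def:induced_rep_and_translation_rep} with the three preceding lemmas (boundedness of $\tilde{\pi}$, strong continuity and bounds for $\Lambda$, and non-degeneracy) is precisely that combination, carried out in parallel for $C_{0}(G,X)$ and $L^{p}(G,X,\omega)$.
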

If $\alpha$ is uniformly bounded by $C_{\alpha}\geq0$, Corollary
\ref{cor:pitilde-Lambda-covariant-non-deg} shows that the left regular
representation $\lambda:A\to B(A)$ of $A$ is such that the covariant
representation $(\tilde{\lambda},\Lambda)$ of $(A,G,\alpha)$ on
$L^{1}(G,A,\omega)$ (as yielded by Definition \ref{def:induced_rep_and_translation_rep})
is continuous with $\|\tilde{\lambda}\|\leq C_{\alpha}$ and $\|\Lambda_{s}\|\leq\omega(s)$.
Moreover, if $A$ has a bounded left or right  approximate identity,
then $\lambda$ is non-degenerate, and hence $(\tilde{\lambda},\Lambda)$
is non-degenerate.

We need two more results before Proposition \ref{pro:reverseinequality}
can be established.
\begin{lem}
\label{lem:point-evaluations-pulled-through-integral}Let $(A,G,\alpha)$
be a Banach algebra dynamical system with $\alpha$ uniformly bounded.
Let $\omega$ be a weight on $G$, and $\lambda:A\to B(A)$ the left
regular representation of $A$. Let $(\tilde{\lambda},\Lambda)$ be
the continuous covariant representation of $(A,G,\alpha)$ on $L^{1}(G,A,\omega)$
\textup{(}as yielded by Definition \ref{def:induced_rep_and_translation_rep}\textup{)}.
Then, for all $f\in C_{c}(G,A)$, $\tilde{\lambda}\rtimes\Lambda(f)\in B(L^{1}(G,A,\omega))$
leaves the subspace $C_{c}(G,A)$ of $L^{1}(G,A,\omega)$ invariant.
In fact, if $h\in C_{c}(G,A)\subseteq L^{1}(G,A,\omega)$, then $\tilde{\lambda}\rtimes\Lambda(f)h\in L^{1}(G,A,\omega)$
is given by the pointwise formula 
\[
[\tilde{\lambda}\rtimes\Lambda(f)h](s)=\int_{G}\alpha_{s}^{-1}(f(r))h(r^{-1}s)\, dr\quad(s\in G).
\]
\end{lem}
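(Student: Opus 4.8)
The plan is to show that the displayed formula defines an element $g\in C_{c}(G,A)$, and then to identify the $L^{1}$-element $\tilde{\lambda}\rtimes\Lambda(f)h$ with $g$ by testing against a separating family of \emph{bounded} functionals (point evaluations not being available, as they are unbounded on $L^{1}$). Writing $F(r,s):=\alpha_{s}^{-1}(f(r))h(r^{-1}s)$, one reads off from Definition \ref{def:induced_rep_and_translation_rep} that $[\tilde{\lambda}(f(r))\Lambda_{r}h](s)=\alpha_{s}^{-1}(f(r))h(r^{-1}s)=F(r,s)$, so that $g(s):=\int_{G}F(r,s)\,dr$ is exactly the proposed value.

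First I would check that $g\in C_{c}(G,A)$. The map $(r,s)\mapsto F(r,s)$ is jointly continuous: the only nonformal ingredient is the joint continuity of $(t,a)\mapsto\alpha_{t}(a)$ on $G\times A$, which follows from the strong continuity of $\alpha$ together with the uniform bound $\|\alpha_{t}\|\leq C_{\alpha}$, via $\|\alpha_{t}(a)-\alpha_{t_{0}}(a_{0})\|\leq C_{\alpha}\|a-a_{0}\|+\|\alpha_{t}(a_{0})-\alpha_{t_{0}}(a_{0})\|$. Since $F(r,s)\neq0$ forces $r\in\mbox{supp}(f)$ and $r^{-1}s\in\mbox{supp}(h)$, the function $F$ is supported in the compact set $\mbox{supp}(f)\times\mbox{supp}(f)\mbox{supp}(h)$. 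Hence the $A$-valued integral $g(s)=\int_{G}F(r,s)\,dr$ exists, is continuous in $s$ by the uniform continuity of $F$ on its compact support, and is supported in $\mbox{supp}(f)\mbox{supp}(h)$; thus $g\in C_{c}(G,A)$.

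Next, to identify $\tilde{\lambda}\rtimes\Lambda(f)h=\int_{G}\tilde{\lambda}(f(r))\Lambda_{r}h\,dr$ with $g$, I would test against the functionals $\phi_{\psi,u}(k):=\int_{G}\psi(k(s))u(s)\,ds$ for $\psi\in A^{*}$ and $u\in C_{c}(G)$. Each $\phi_{\psi,u}$ is bounded on $L^{1}(G,A,\omega)$, since $\omega$ is bounded away from $0$ on the compact set $\mbox{supp}(u)$ \cite[Lemma 1.3.3]{Kaniuth}, giving $|\phi_{\psi,u}(k)|\leq\|\psi\|\,\|u/\omega\|_{\infty}\,\|k\|_{1,\omega}$. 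Applying the definition of the vector-valued integral by duality \cite[Section 3]{Rudin}, then Fubini's theorem for the jointly continuous, compactly supported integrand, and finally pulling the bounded functional $\psi$ through the inner $A$-valued integral, I would obtain
\[
\phi_{\psi,u}\big(\tilde{\lambda}\rtimes\Lambda(f)h\big)=\int_{G}\int_{G}\psi\big(F(r,s)\big)u(s)\,dr\,ds=\int_{G}\psi\big(g(s)\big)u(s)\,ds=\phi_{\psi,u}(g).
\]

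It remains to deduce $\tilde{\lambda}\rtimes\Lambda(f)h=g$ from the vanishing of $\phi_{\psi,u}$ on $D:=\tilde{\lambda}\rtimes\Lambda(f)h-g$ for all $\psi,u$, and this is the step demanding the most care, because $A$ (and $A^{*}$) need not be separable. Here I would invoke the identification of $L^{1}(G,A,\omega)$ with a Bochner space of $A$-valued functions established in the remark above, to represent $D$ by a Bochner integrable $\tilde{D}:G\to A$ with separable essential range. For each fixed $\psi$, the function $s\mapsto\psi(\tilde{D}(s))$ is locally integrable and annihilates every $u\in C_{c}(G)$, hence vanishes almost everywhere. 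Choosing a countable family $(\psi_{n})\subseteq A^{*}$ separating the points of the separable essential range of $\tilde{D}$, and taking the union of the corresponding null sets, yields $\tilde{D}(s)=0$ for almost every $s$, i.e.\ $D=0$. Therefore $\tilde{\lambda}\rtimes\Lambda(f)h=g\in C_{c}(G,A)$, which simultaneously establishes the invariance of $C_{c}(G,A)$ and the pointwise formula. The main obstacle is precisely this final separation argument; the remainder is joint-continuity bookkeeping and a routine Fubini interchange.
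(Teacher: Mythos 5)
Your proof is correct, but it takes a genuinely different route from the paper's. You treat the point evaluations as unavailable on $L^{1}(G,A,\omega)$ and compensate with a duality argument: you verify directly that the proposed formula defines an element $g\in C_{c}(G,A)$, then identify $\tilde{\lambda}\rtimes\Lambda(f)h$ with $g$ by testing against the separating family $\phi_{\psi,u}$, which requires Fubini, the Bochner-space identification of $L^{1}(G,A,\omega)$, and a countable-separation argument (via the separable range of a Bochner representative) to cope with the possible non-separability of $A$. The paper avoids all of this by noticing that the integrand $r\mapsto\tilde{\lambda}(f(r))\Lambda_{r}h$ takes values in the fixed closed subspace $C_{0}(G,A)_{K}$ with $K=\mathrm{supp}(f)\cdot\mathrm{supp}(h)$, and that the inclusion $j_{K}:C_{0}(G,A)_{K}\to L^{1}(G,A,\omega)$ is bounded because $\omega$ is bounded on compact sets; hence the $L^{1}$-valued integral equals $j_{K}$ applied to a $C_{0}(G,A)_{K}$-valued integral, and on $C_{0}(G,A)_{K}$ the evaluations $\mathrm{ev}_{s}$ \emph{are} bounded and can be pulled through the integral. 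That single factorization yields both the invariance of $C_{c}(G,A)$ and the pointwise formula at once, with no measure theory; your version is self-contained in that it never introduces the auxiliary representation on $C_{0}(G,A)$, but it pays for this with the more delicate final separation step (which you do carry out correctly).
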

\begin{proof}
We proceed indirectly, via $C_{0}(G,A)$, and write $(\tilde{\lambda}_{0},\Lambda_{0})$
and $(\tilde{\lambda}_{1},\Lambda_{1})$ for the continuous covariant
representations of $(A,G,\alpha)$ on $C_{0}(G,A)$ and $L^{1}(G,A,\omega)$,
respectively. Let $f,h\in C_{c}(G,A)$ and consider the integral 
\[
\tilde{\lambda}_{1}\rtimes\Lambda_{1}(f)h=\int_{G}\tilde{\lambda}_{1}(f(r))\Lambda_{1,r}h\, dr\in L^{1}(G,A,\omega).
\]
Let $K:=\mbox{supp}(f)\cdot\mbox{supp}(h)$, and put $C_{0}(G,A)_{K}:=\{g\in C_{0}(G,A):\mbox{supp}(g)\subseteq K\}$.
Then $C_{0}(G,A)_{K}$ is a closed subspace of $C_{0}(G,A)$ and the
inclusion $j_{K}:C_{0}(G,A)_{K}\to L^{1}(G,A,\omega)$ is bounded,
since $\omega$ is bounded on compact sets. Define $\psi:G\to C_{0}(G,A)_{K}$
by $\psi(r):=\tilde{\lambda}_{0}(f(r))\Lambda_{0,r}h$ for all $r\in G$.
Then $\psi$ is continuous and supported on the compact set $\mbox{supp}(f)\subseteq G$.
Now, by the boundedness of $j_{K}$, 
\[
\int_{G}\tilde{\lambda}_{1}(f(r))\Lambda_{1,r}h\, dr=\int_{G}j_{K}\circ\psi(r)\, dr=j_{K}\left(\int_{G}\psi(r)\, dr\right).
\]
Since $\int_{G}\psi(r)\, dr\in C_{0}(G,A)_{K}$, we conclude that
$\tilde{\lambda}_{1}\rtimes\Lambda_{1}(f)h\in C_{c}(G,A)$.

Since the evaluations $\mbox{ev}_{s}:C_{0}(G,A)_{K}\to A$, sending
$g\in C_{0}(G,A)_{K}$ to $g(s)\in A$, are bounded for all $s\in G$,
we find that, for all $s\in G$,
\begin{eqnarray*}
\left(\int_{G}\psi(r)\, dr\right)(s) & = & \mbox{ev}_{s}\left(\int_{G}\psi(r)\, dr\right)\\
 & = & \int_{G}\mbox{ev}_{s}\circ\psi(r)\, dr\\
 & = & \int_{G}\psi(r)(s)\, dr\\
 & = & \int_{G}\alpha_{s}^{-1}(f(r))h(r^{-1}s)\, dr.
\end{eqnarray*}
Therefore $[\tilde{\lambda}_{1}\rtimes\Lambda_{1}(f)h](s)=\int_{G}\alpha_{s}^{-1}(f(r))h(r^{-1}s)\, dr$.\end{proof}
\begin{lem}
\label{lem:norm-approximation-from-below}Let $A$ be a Banach algebra
with bounded approximate right identity $(u_{i})$ and let $K\subseteq A$
be compact. Then, for any $\varepsilon>0$, there exists an index
$i_{0}$ such that $\|au_{i}\|\geq\|a\|-\varepsilon$ for all $a\in K$
and all $i\geq i_{0}$.\end{lem}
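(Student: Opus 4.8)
The plan is to upgrade the pointwise convergence $\|au_i-a\|\to0$ (which holds for each fixed $a$ since $(u_i)$ is a bounded approximate right identity) to a statement that is uniform over the compact set $K$, and then to invoke the reverse triangle inequality. First I would record the ingredients I intend to use: there is a constant $M\geq0$ with $\|u_i\|\leq M$ for all $i$; for every fixed $a\in A$ one has $\|au_i-a\|\to0$; and the index set of the net $(u_i)$ is directed, so finitely many indices always admit a common upper bound.

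The reductive step is the elementary bound $\|au_i\|\geq\|a\|-\|au_i-a\|$, valid for every $a$ and $i$. Consequently it suffices to produce a single $i_0$ such that $\|au_i-a\|<\varepsilon$ holds simultaneously for all $a\in K$ and all $i\geq i_0$; the asserted inequality $\|au_i\|\geq\|a\|-\varepsilon$ then follows immediately. Thus the whole lemma reduces to establishing \emph{uniform} approximation $au_i\to a$ over the compact set $K$.

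To obtain this uniformity I would run the standard finite-subcover argument. Fix a radius $\delta>0$ to be specified, and use compactness of $K$ to extract a finite cover by open balls of radius $\delta$ with centres $a_1,\dots,a_n\in K$. For each centre $a_k$, pointwise convergence yields an index $i_k$ with $\|a_ku_i-a_k\|<\varepsilon/2$ for all $i\geq i_k$, and by directedness I choose $i_0$ dominating $i_1,\dots,i_n$. For an arbitrary $a\in K$, pick a centre $a_k$ with $\|a-a_k\|<\delta$ and split
\[
\|au_i-a\|\leq\|(a-a_k)u_i\|+\|a_ku_i-a_k\|+\|a_k-a\|\leq \delta M+\tfrac{\varepsilon}{2}+\delta
\]
for $i\geq i_0$, where the bound $M$ on $\|u_i\|$ controls the first term.

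The only point requiring care is the choice of $\delta$ so that the $M$-dependent term does not spoil the estimate; taking $\delta:=\varepsilon/\bigl(2(1+M)\bigr)$ gives $\delta M+\delta=\varepsilon/2$, whence $\|au_i-a\|<\varepsilon$ uniformly for $a\in K$ and $i\geq i_0$, as required. This bookkeeping is the main (and essentially the only) obstacle, and it is entirely routine; the conceptual content is simply that compactness converts pointwise convergence of the net $(\,\cdot\,u_i)$ into uniform convergence.
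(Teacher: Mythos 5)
Your proof is correct and follows essentially the same route as the paper's: a finite $\delta$-net of the compact set $K$, a common index $i_0$ obtained from directedness, the bound $M$ on the net to control $\|(a-a_k)u_i\|$, and the reverse triangle inequality (the paper uses an $\varepsilon/3$-split with $\delta=\varepsilon/3M$ where you use an $\varepsilon/2$-split with $\delta=\varepsilon/(2(1+M))$, a purely cosmetic difference).
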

\begin{proof}
Let $M\geq1$ be an upper bound for $(u_{i})$ and $\varepsilon>0$
be arbitrary. By compactness of $K$, there exist $a_{1},\ldots,a_{n}\in K$
such that for all $a\in K$ there exists an index $k\in\{1,\ldots,n\}$
with $\|a-a_{k}\|<\varepsilon/3M\leq\varepsilon/3$. Let $i_{0}$
be such that $\|a_{k}u_{i}-a_{k}\|<\varepsilon/3$ for all $k\in\{1,\ldots,n\}$
and all $i\geq i_{0}$. 

Now, for $a\in K$ arbitrary, let $k_{0}\in\{1,\ldots,n\}$ be such
that $\|a-a_{k_{0}}\|<\varepsilon/3$. For any $i\geq i_{0}$, 
\begin{eqnarray*}
\|au_{i}\| & \geq & \|a\|-\|au_{i}-a_{k_{0}}u_{i}\|-\|a_{k_{0}}u_{i}-a_{k_{0}}\|-\|a_{k_{0}}-a\|\\
 & > & \|a\|-\frac{\varepsilon}{3M}M-\frac{\varepsilon}{3}-\frac{\varepsilon}{3}\\
 & = & \|a\|-\varepsilon.
\end{eqnarray*}
\end{proof}
Finally, we combine Lemmas \ref{lem:tilde_lambda_bounded}--\ref{lem:norm-approximation-from-below}
to obtain the following:
\begin{prop}
\label{pro:reverseinequality}Let $(A,G,\alpha)$ be a Banach algebra
dynamical system where $A$ has an $M$-bounded approximate right
identity and $\alpha$ is uniformly bounded by a constant $C_{\alpha}$.
Let $\omega$ be a weight on $G$, and $\lambda:A\to B(A)$ the left
regular representation of $A$. Let $W\subseteq G$ be a precompact
neighbourhood of $e\in G$. Then the non-degenerate continuous covariant
representation $(\tilde{\lambda},\Lambda)$ of $(A,G,\alpha)$ on
$L^{1}(G,A,\omega)$ \textup{(}as yielded by Definition \ref{def:induced_rep_and_translation_rep}\textup{)}
satisfies 
\[
\|\tilde{\lambda}\rtimes\Lambda(f)\|\geq\frac{1}{C_{\alpha}M\sup_{s\in W}\omega(s)}\|f\|_{1,\omega}
\]
for all $f\in C_{c}(G,A)$. Consequently, the representation $\tilde{\lambda}\rtimes\Lambda:C_{c}(G,A)\to B(L^{1}(G,A,\omega))$
is faithful.\end{prop}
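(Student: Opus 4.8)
The plan is to bound the operator norm of $\tilde\lambda\rtimes\Lambda(f)$ from below by testing it against carefully chosen elements of $C_c(G,A)$ and exploiting the explicit pointwise formula of Lemma \ref{lem:point-evaluations-pulled-through-integral}. Writing $(u_i)$ for the $M$-bounded approximate right identity of $A$, I would use test functions of the form $h=\phi\otimes u_i$, where $\phi\geq 0$ is continuous and supported in a small symmetric precompact neighbourhood $V\subseteq W$ of $e$. Since $\phi(r^{-1}s)$ is scalar, the formula in Lemma \ref{lem:point-evaluations-pulled-through-integral} factors as $[\tilde\lambda\rtimes\Lambda(f)h](s)=F(s)u_i$ with $F(s):=\int_G\phi(r^{-1}s)\alpha_s^{-1}(f(r))\,dr$, and left-invariance of Haar measure shows the total mass $c_\phi:=\int_G\phi(r^{-1}s)\,dr=\int_G\phi(r^{-1})\,dr$ is independent of $s$.

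The heart of the argument is to show that $F(s)$ is close to $c_\phi\,\alpha_s^{-1}(f(s))$. Since $F(s)-c_\phi\alpha_s^{-1}(f(s))=\int_G\phi(r^{-1}s)\,\alpha_s^{-1}(f(r)-f(s))\,dr$, using $\|\alpha_s^{-1}\|\leq C_\alpha$ together with the uniform continuity of $f$ (so that $\|f(r)-f(s)\|$ is uniformly small once $r^{-1}s\in V$ with $V$ small) bounds $\int_G\|F(s)-c_\phi\alpha_s^{-1}(f(s))\|\,\omega(s)\,ds$ by $c_\phi$ times an arbitrarily small constant; the integrand is supported in the fixed compact set $\mathrm{supp}(f)\cdot V$, on which $\omega$ is bounded, so no divergence occurs. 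Right multiplication by $u_i$ (norm at most $M$) preserves this up to the factor $M$, whence $\|\tilde\lambda\rtimes\Lambda(f)h\|_{1,\omega}\geq c_\phi\int_G\|\alpha_s^{-1}(f(s))u_i\|\,\omega(s)\,ds$ minus a small error.

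To recover $\|f\|_{1,\omega}$, note that $K:=\{\alpha_s^{-1}(f(s)):s\in G\}$ is compact (the continuous image of $\mathrm{supp}(f)$, together with $0$), so Lemma \ref{lem:norm-approximation-from-below} gives, for $i$ large, $\|\alpha_s^{-1}(f(s))u_i\|\geq\|\alpha_s^{-1}(f(s))\|-\varepsilon$ for all $s$; this correction is effective only on $\mathrm{supp}(f)$, over which it integrates $\omega$ against a compact set, so it contributes a vanishing error. Finally $\|f(s)\|=\|\alpha_s(\alpha_s^{-1}(f(s)))\|\leq C_\alpha\|\alpha_s^{-1}(f(s))\|$ gives $\int_G\|\alpha_s^{-1}(f(s))\|\omega(s)\,ds\geq C_\alpha^{-1}\|f\|_{1,\omega}$, while $\|h\|_{1,\omega}=\|u_i\|\int_G\phi(t)\omega(t)\,dt\leq M(\sup_{s\in W}\omega(s))\int_G\phi(t)\,dt$. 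Assembling these, the ratio $\|\tilde\lambda\rtimes\Lambda(f)h\|_{1,\omega}/\|h\|_{1,\omega}$ is at least $(c_\phi/\int_G\phi\,dt)\cdot(C_\alpha M\sup_{s\in W}\omega(s))^{-1}\|f\|_{1,\omega}$, minus errors tending to $0$.

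The one point that looks like an obstruction but is not is the mismatch between $c_\phi=\int_G\phi(r)\Delta(r^{-1})\,dr$ and $\int_G\phi(r)\,dr$, where $\Delta$ is the modular function, producing the quotient $c_\phi/\int_G\phi\,dt$, which is not $1$ in general. However, since $\Delta$ is continuous with $\Delta(e)=1$, shrinking $\mathrm{supp}(\phi)$ forces this quotient to be at least $1-\delta$ for any prescribed $\delta>0$. Since $\|\tilde\lambda\rtimes\Lambda(f)\|$ dominates every such ratio, letting all error parameters (the neighbourhood size governing uniform continuity and $\delta$, and the index $i$) tend to their limits gives the claimed inequality. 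Faithfulness is then immediate: a weight is strictly positive everywhere (as $\omega(s)\omega(s^{-1})\geq\omega(e)\geq 1$), so $\|\cdot\|_{1,\omega}$ is a genuine norm on $C_c(G,A)$, and the inequality forces $\tilde\lambda\rtimes\Lambda(f)=0$ to imply $f=0$.
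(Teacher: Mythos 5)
Your argument is correct and follows essentially the same route as the paper's proof: both test $\tilde{\lambda}\rtimes\Lambda(f)$ against a (normalized) bump function near $e$ tensored with a member $u_{i}$ of the bounded approximate right identity, invoke the pointwise formula of Lemma \ref{lem:point-evaluations-pulled-through-integral}, the uniform continuity of $f$, and Lemma \ref{lem:norm-approximation-from-below} applied to the compact set $\{\alpha_{s}^{-1}(f(s)):s\in G\}$, and extract the constant via $\|f(s)\|\leq C_{\alpha}\|\alpha_{s}^{-1}(f(s))\|$ and $\|h\|_{1,\omega}\leq M\sup_{s\in W}\omega(s)$. The only (immaterial) divergence is that the paper cancels the modular-function discrepancy exactly by symmetrizing the bump function $h_{0}$, whereas you absorb it into a factor $1-\delta$ by shrinking $\textup{supp}(\phi)$ and using the continuity of $\Delta$ at $e$.
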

\begin{proof}
Let $(u_{i})$ be an $M$-bounded approximate right identity of $A$
and $W\subseteq G$ any precompact neighbourhood of $e\in G$. Let
$f\in C_{c}(G,A)$ and $\varepsilon>0$ be arbitrary. By the uniform
continuity of $f$, there exists a symmetric neighbourhood $V\subseteq W$
of $e\in G$ such that $\|f(r)-f(rs)\|<\varepsilon/2C_{\alpha}M$
for all $s\in V$ and $r\in G$. By continuity of all maps involved
and the assumption that $f$ is compactly supported, the set $\{\alpha_{s}^{-1}(f(s)):s\in G\}\subseteq A$
is compact. Lemma \ref{lem:norm-approximation-from-below} then asserts
the existence of an index $i_{0}$, such that $\|au_{i_{0}}\|\geq\|a\|-\varepsilon/2$
for all $a\in\{\alpha_{s}^{-1}(f(s)):s\in G\}$.

By Urysohn's Lemma, let $h_{0}:G\to[0,1]$ be continuous with $h_{0}(e)=1$
and $\textup{supp}(h_{0})\subseteq V$, so that $h_{0}\in C_{c}(G)$.
We may assume $h_{0}(r)=h_{0}(r^{-1})$ for all $r\in G$, by replacing
$h_{0}$ with $r\mapsto\max\{h_{0}(r),h_{0}(r^{-1})\}$. Define 
\[
h:=\left(\int_{G}h_{0}(t)\, dt\right)^{-1}h_{0}\otimes u_{i_{0}}\in C_{c}(G,A).
\]
Then 
\begin{eqnarray*}
\|h\|_{1,\omega} & = & \left(\int_{G}h_{0}(t)\, dt\right)^{-1}\int_{G}h_{0}(r)\|u_{i_{0}}\|\omega(r)\, dr\\
 & \leq & M\sup_{r\in V}\omega(r)\\
 & \leq & M\sup_{r\in W}\omega(r).
\end{eqnarray*}

For every $s\in G$, we find, using the reverse triangle inequality,
noting that $\|f(s)\|=\|\alpha_{s}\circ\alpha_{s^{-1}}(f(s))\|\leq C_{\alpha}\|\alpha_{s^{-1}}(f(s))\|$,
remembering that $h_{0}$ is supported in $V$, and applying Lemma
\ref{lem:point-evaluations-pulled-through-integral}, that

\begin{eqnarray*}
 &  & \|[\tilde{\lambda}\rtimes\Lambda(f)h](s)\|\\
 & = & \left\Vert \int_{G}\alpha_{s}^{-1}(f(r))h(r^{-1}s)\, dr\right\Vert \\
 & = & \left\Vert \int_{G}\alpha_{s}^{-1}(f(sr))h(r^{-1})\, dr\right\Vert \\
 & = & \left(\int_{G}h_{0}(t)\, dt\right)^{-1}\left\Vert \int_{G}h_{0}(r^{-1})\alpha_{s}^{-1}(f(sr))u_{i_{0}}\, dr\right\Vert \\
 & \geq & \left(\int_{G}h_{0}(t)\, dt\right)^{-1}\left\Vert \int_{G}h_{0}(r^{-1})\alpha_{s}^{-1}(f(s))u_{i_{0}}\, dr\right\Vert \\
 &  & \quad-\left(\int_{G}h_{0}(t)\, dt\right)^{-1}\left\Vert \int_{G}h_{0}(r^{-1})\alpha_{s}^{-1}(f(s)-f(sr))u_{i_{0}}\, dr\right\Vert \\
 & \geq & \left(\int_{G}h_{0}(t)\, dt\right)^{-1}\left(\int_{G}h_{0}(r)\, dr\right)\left\Vert \alpha_{s}^{-1}(f(s))u_{i_{0}}\right\Vert \\
 &  & \quad-\left(\int_{G}h_{0}(t)\, dt\right)^{-1}\frac{\varepsilon C_{\alpha}M\left(\int_{G}h_{0}(r)\, dr\right)}{2C_{\alpha}M}\\
 & \geq & \|\alpha_{s^{-1}}(f(s))\|-\frac{\varepsilon}{2}-\frac{\varepsilon}{2}\\
 & \geq & \frac{1}{C_{\alpha}}\|f(s)\|-\varepsilon.
\end{eqnarray*}
Hence, with $L:=\sup_{s\in\mbox{supp}(f)}\omega(s)$, which is finite
since $\omega$ is bounded on compact sets, 
\begin{eqnarray*}
\|\tilde{\lambda}\rtimes\Lambda(f)h\|_{1,\omega} & \geq & \int_{\mbox{supp}(f)}\|[\tilde{\lambda}\rtimes\Lambda(f)h](s)\|\omega(s)\, ds\\
 & \geq & \int_{\mbox{supp}(f)}\left(\frac{1}{C_{\alpha}}\|f(s)\|-\varepsilon\right)\omega(s)\, ds\\
 & \geq & \frac{1}{C_{\alpha}}\|f\|_{1,\omega}-\varepsilon\mu(\mbox{supp}(f))L.
\end{eqnarray*}
Now, since $\|h\|_{1,\omega}\leq M\sup_{r\in W}\omega(r)$, we obtain
\begin{eqnarray*}
\|\tilde{\lambda}\rtimes\Lambda(f)\| & \geq & \frac{1}{C_{\alpha}M\sup_{r\in W}\omega(r))}\|f\|_{1,\omega}-\frac{\varepsilon L}{M\sup_{r\in W}\omega(r)}\mu(\mbox{supp}(f)).
\end{eqnarray*}
Because $\varepsilon>0$ was chosen arbitrarily, $\|\tilde{\lambda}\rtimes\Lambda(f)\|\geq(C_{\alpha}M\sup_{r\in W}\omega(r))^{-1}\|f\|_{1,\omega}$
now follows. \end{proof}
We now combine our previous results, notably (\ref{eq:seminorm-bounded-by-beurlingnorm})
and Proposition \ref{pro:reverseinequality}, to obtain sufficient
conditions for a crossed product $\crossedprod$ to be isomorphic
to a generalized Beurling algebra, and also collect a number of direct
consequences in the following result. The desired reverse inequality
to (\ref{eq:seminorm-bounded-by-beurlingnorm}) is a consequence of
Proposition \ref{pro:reverseinequality}, supplying the first inequality
in (\ref{eq:inequalitychain}) and the second inequality in (\ref{eq:inequalitychain}),
which follows from the assumption that $(\tilde{\lambda},\Lambda)$
is $\mathcal{R}$-continuous.
\begin{thm}
\label{thm:faithful-representation}Let $(A,G,\alpha)$ be a Banach
algebra dynamical system where $A$ has an $M$-bounded approximate
right identity and $\alpha$ is uniformly bounded by a constant $C_{\alpha}$.
Let $\omega$ be a weight on $G$. Let $\mathcal{R}$ be a uniformly
bounded class of continuous covariant representations of $(A,G,\alpha)$
with $C^{\mathcal{R}}=\sup_{(\pi,U)\in\mathcal{R}}\|\pi\|<\infty$
and satisfying $\nu^{\mathcal{R}}(r)=\sup_{(\pi,U)\in\mathcal{R}}\|U_{r}\|\leq\omega(r)$
for all $r\in G$. Let $\lambda$ be the left regular representation
of $A$, and suppose that the non-degenerate continuous covariant
representation $(\tilde{\lambda},\Lambda)$ of $(A,G,\alpha)$ on
$L^{1}(G,A,\omega)$ \textup{(}as yielded by Definition \ref{def:induced_rep_and_translation_rep}\textup{)}
is $\mathcal{R}$-continuous. Then, for all $f\in C_{c}(G,A)$, with
$\mathcal{Z}$ denoting a neighbourhood base of $e\in G$ of which
all elements are contained in a fixed compact set, 
\begin{align}
\left(\frac{1}{C_{\alpha}M\inf_{W\in\mathcal{Z}}\sup_{r\in W}\omega(r)}\right)\|f\|_{1,\omega}\leq\|\tilde{\lambda}\rtimes\Lambda(f)\|\quad\quad\quad\quad\quad\quad\label{eq:inequalitychain}\\
\leq\|\tilde{\lambda}\rtimes\Lambda\|\sigma^{\mathcal{R}}(f)\leq\|\tilde{\lambda}\rtimes\Lambda\|C^{\mathcal{R}}\|f\|_{1,\omega}.\nonumber 
\end{align}
In particular, $\sigma^{\mathcal{R}}$ is a norm on $C_{c}(G,A)$,
so that $C_{c}(G,A)$ can be identified with a subspace of $\crossedprod$.
Since the norms $\sigma^{\mathcal{R}}$ and $\|\cdot\|_{1,\omega}$
on $C_{c}(G,A)$ are equivalent, there exists a topological isomorphism
between the Banach algebra $\crossedprod$ and the generalized Beurling
algebra $\BeurlingTypeAlg$ that is the identity on $C_{c}(G,A)$. 

The multiplication on the common dense subspace $C_{c}(G,A)$ of the
spaces  $\crossedprod$ and\textup{ $\BeurlingTypeAlg$} is given
by 
\[
[f*g](s):=\int_{G}f(r)\alpha_{r}(g(r^{-1}s))\, dr\quad(f,g\in C_{c}(G,A),\ s\in G).
\]

The faithful representation $\tilde{\lambda}\rtimes\Lambda:C_{c}(G,A)\to B(L^{1}(G,A,\omega))$
extends to a topological embedding $(\tilde{\lambda}\rtimes\Lambda)^{\mathcal{R}}:\crossedprod\to B(L^{1}(G,A,\omega))$
of the Banach algebra $\crossedprod$ into $B(L^{1}(G,A,\omega))$.
\end{thm}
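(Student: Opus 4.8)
The plan is to read off all the assertions from the two-sided estimate (\ref{eq:inequalitychain}), whose three inequalities come from results already in hand. For the leftmost inequality I would invoke Proposition \ref{pro:reverseinequality}, which gives $\|\tilde{\lambda}\rtimes\Lambda(f)\|\geq(C_{\alpha}M\sup_{r\in W}\omega(r))^{-1}\|f\|_{1,\omega}$ for \emph{every} precompact neighbourhood $W$ of $e$. Taking the supremum of the right-hand side over all $W\in\mathcal{Z}$---equivalently, passing to the infimum of $\sup_{r\in W}\omega(r)$---produces exactly the constant $(C_{\alpha}M\inf_{W\in\mathcal{Z}}\sup_{r\in W}\omega(r))^{-1}$; boundedness of $\omega$ on the fixed compact set containing the members of $\mathcal{Z}$ keeps this denominator positive and finite. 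The middle inequality is precisely the defining property of the $\mathcal{R}$-continuity of $(\tilde{\lambda},\Lambda)$: the integrated form is $\sigma^{\mathcal{R}}$-bounded, so $\|\tilde{\lambda}\rtimes\Lambda(f)\|=\|(\tilde{\lambda}\rtimes\Lambda)^{\mathcal{R}}(q^{\mathcal{R}}(f))\|\leq\|\tilde{\lambda}\rtimes\Lambda\|\,\sigma^{\mathcal{R}}(f)$, where $\|\tilde{\lambda}\rtimes\Lambda\|$ denotes the norm of the induced operator on $\crossedprod$. The rightmost inequality is the already-established estimate (\ref{eq:seminorm-bounded-by-beurlingnorm}), namely $\sigma^{\mathcal{R}}(f)\leq C^{\mathcal{R}}\|f\|_{1,\omega}$.

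From (\ref{eq:inequalitychain}) the remaining claims follow by bookkeeping. Definiteness of $\sigma^{\mathcal{R}}$ is immediate: if $\sigma^{\mathcal{R}}(f)=0$, the first two inequalities force $\|f\|_{1,\omega}=0$, hence $f=0$ in $C_{c}(G,A)$; thus the seminorm $\sigma^{\mathcal{R}}$ is in fact a norm, $\ker\sigma^{\mathcal{R}}=\{0\}$, and $q^{\mathcal{R}}$ embeds $C_{c}(G,A)$ into $\crossedprod$. The outer two inequalities of (\ref{eq:inequalitychain}), read together with (\ref{eq:seminorm-bounded-by-beurlingnorm}), show that $\sigma^{\mathcal{R}}$ and $\|\cdot\|_{1,\omega}$ are equivalent norms on $C_{c}(G,A)$. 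Since $\crossedprod$ and $\BeurlingTypeAlg$ are the completions of the one space $C_{c}(G,A)$ with respect to these two equivalent norms, the identity map on $C_{c}(G,A)$ extends to a topological vector space isomorphism between them; as this map is multiplicative on the dense subspace $C_{c}(G,A)$ (both products being the single twisted convolution) and multiplication is continuous on each completion, it is automatically an algebra homomorphism, hence a topological algebra isomorphism. The displayed convolution formula is simply the definition of the product on $C_{c}(G,A)$, inherited by both completions.

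For the final assertion I would combine the two outer factors of (\ref{eq:inequalitychain}) into a two-sided estimate purely in terms of $\sigma^{\mathcal{R}}$. Using $\|f\|_{1,\omega}\geq(C^{\mathcal{R}})^{-1}\sigma^{\mathcal{R}}(f)$ from (\ref{eq:seminorm-bounded-by-beurlingnorm}) inside the leftmost inequality yields a constant $c>0$ with $c\,\sigma^{\mathcal{R}}(f)\leq\|\tilde{\lambda}\rtimes\Lambda(f)\|\leq\|\tilde{\lambda}\rtimes\Lambda\|\,\sigma^{\mathcal{R}}(f)$ for all $f\in C_{c}(G,A)$. Because $(\tilde{\lambda},\Lambda)$ is $\mathcal{R}$-continuous, $\tilde{\lambda}\rtimes\Lambda$ factors through $\crossedprod$ as $(\tilde{\lambda}\rtimes\Lambda)^{\mathcal{R}}$, and since $\sigma^{\mathcal{R}}(f)=\|q^{\mathcal{R}}(f)\|^{\mathcal{R}}$ the preceding estimate says exactly that this induced map is bounded above and below on the dense subspace $q^{\mathcal{R}}(C_{c}(G,A))$; both bounds then extend by continuity to all of $\crossedprod$, so $(\tilde{\lambda}\rtimes\Lambda)^{\mathcal{R}}$ is a topological embedding. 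I do not anticipate a serious obstacle here: all of the genuine analysis is already encapsulated in Proposition \ref{pro:reverseinequality}, and the only points requiring care are the passage from ``for every $W$'' to the infimum over the neighbourhood base in the leftmost constant, and the upgrade of the norm equivalence to an algebra isomorphism via continuity of the two multiplications.
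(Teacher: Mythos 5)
Your proposal is correct and follows essentially the same route the paper intends: the three inequalities in (\ref{eq:inequalitychain}) are obtained from Proposition \ref{pro:reverseinequality} (optimized over $W\in\mathcal{Z}$), the $\mathcal{R}$-continuity of $(\tilde{\lambda},\Lambda)$, and the estimate (\ref{eq:seminorm-bounded-by-beurlingnorm}), respectively, with the remaining assertions read off from the resulting norm equivalence exactly as in the paper's preceding discussion.
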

Using Corollary \ref{cor:pitilde-Lambda-covariant-non-deg}, we have
the following consequence of Theorem \ref{thm:faithful-representation},
where the isomorphism between $\crossedprod$ and $\BeurlingTypeAlg$
is isometric.
\begin{cor}
\label{cor:crossedprod_is_Beulringalgebra}Let $(A,G,\alpha)$ be
a Banach algebra dynamical system where $A$ has a $1$-bounded approximate
right identity and $\alpha$ lets $G$ act as isometries on $A$.
Let $\omega$ be a weight on $G$, and $\lambda$ the left regular
representation of $A$. Then the non-degenerate continuous covariant
representation $(\tilde{\lambda},\Lambda)$ on $L^{1}(G,A,\omega)$
\textup{(}as yielded by Definition \ref{def:induced_rep_and_translation_rep}\textup{)}
is such that $\tilde{\lambda}$ is contractive and $\Lambda$ is bounded
by $\omega$. 

Suppose furthermore that $\inf_{W\in\mathcal{Z}}\sup_{r\in W}\omega(r)=1$,
with $\mathcal{Z}$ denoting a neighbourhood base of $e\in G$ of
which all elements are contained in a fixed compact set, and that
$\mathcal{R}$ is a uniformly bounded class of continuous covariant
representations with $(\tilde{\lambda},\Lambda)\in\mathcal{R}$, and
satisfying 
\[
C^{\mathcal{R}}=\sup_{(\pi,U)\in\mathcal{R}}\|\pi\|\leq1,
\]
 and 
\[
\nu^{\mathcal{R}}(r)=\sup_{(\pi,U)\in\mathcal{R}}\|U_{r}\|\leq\omega(r)\quad(r\in G).
\]
Then $\sigma^{\mathcal{R}}(f)=\|f\|_{1,\omega}$ for $f\in C_{c}(G,A)$,
and hence $\crossedprod$ is isometrically isomorphic to the generalized
Beurling algebra $\BeurlingTypeAlg$. 

Moreover, \textup{$(\tilde{\lambda}\rtimes\Lambda)^{\mathcal{R}}:\crossedprod\to B(L^{1}(G,A,\omega))$}
is an isometric embedding as a Banach algebra.\end{cor}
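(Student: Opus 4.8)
The plan is to read everything off Theorem \ref{thm:faithful-representation} and Proposition \ref{pro:reverseinequality}, specialized to the present constants, and then to use the extra hypothesis $(\tilde\lambda,\Lambda)\in\mathcal{R}$ to upgrade the norm equivalences found there into exact equalities.

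First I would dispose of the opening assertion. The left regular representation $\lambda\colon A\to B(A)$ satisfies $\|\lambda\|\le 1$ by submultiplicativity of the algebra norm, and $\alpha$ acting isometrically means $C_{\alpha}=1$; so Corollary \ref{cor:pitilde-Lambda-covariant-non-deg}(1) gives $\|\tilde\lambda\|\le C_{\alpha}\|\lambda\|\le 1$, i.e.\ $\tilde\lambda$ is contractive, while part (2) with $p=1$ gives $\|\Lambda_{s}\|\le\omega(s)$. Non-degeneracy of $\lambda$, which follows from the bounded approximate right identity as recorded after Corollary \ref{cor:pitilde-Lambda-covariant-non-deg}, together with part (4) yields non-degeneracy of $(\tilde\lambda,\Lambda)$. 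This also confirms that $(\tilde\lambda,\Lambda)$ is consistent with membership in a class satisfying $C^{\mathcal{R}}\le1$ and $\nu^{\mathcal{R}}\le\omega$, as the hypothesis $(\tilde\lambda,\Lambda)\in\mathcal{R}$ requires.

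The heart of the argument is the identity $\sigma^{\mathcal{R}}(f)=\|f\|_{1,\omega}$. For the upper bound, inequality (\ref{eq:seminorm-bounded-by-beurlingnorm}) together with $C^{\mathcal{R}}\le1$ gives $\sigma^{\mathcal{R}}(f)\le C^{\mathcal{R}}\|f\|_{1,\omega}\le\|f\|_{1,\omega}$. For the matching lower bound I would exploit that $(\tilde\lambda,\Lambda)\in\mathcal{R}$, so that by the very definition of the seminorm $\sigma^{\mathcal{R}}(f)\ge\|\tilde\lambda\rtimes\Lambda(f)\|$; Proposition \ref{pro:reverseinequality} bounds the right-hand side below by $(C_{\alpha}M\sup_{s\in W}\omega(s))^{-1}\|f\|_{1,\omega}$ for every precompact neighbourhood $W$ of $e$, and taking the infimum over $W\in\mathcal{Z}$ while using $C_{\alpha}=M=1$ and $\inf_{W\in\mathcal{Z}}\sup_{r\in W}\omega(r)=1$ yields $\sigma^{\mathcal{R}}(f)\ge\|f\|_{1,\omega}$. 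Equivalently, one may simply invoke the chain (\ref{eq:inequalitychain}) from Theorem \ref{thm:faithful-representation} with all three constants equal to $1$. Combining the two bounds gives the asserted equality for every $f\in C_{c}(G,A)$.

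From here the remaining claims are formal. Since $\sigma^{\mathcal{R}}=\|\cdot\|_{1,\omega}$ on $C_{c}(G,A)$ and the latter is already a genuine norm, $\ker\sigma^{\mathcal{R}}=\{0\}$, so $\crossedprod$ and $\BeurlingTypeAlg$ are the completions of the \emph{same} normed space $C_{c}(G,A)$ carrying the \emph{same} twisted-convolution multiplication; the identity on $C_{c}(G,A)$ therefore extends to an isometric algebra isomorphism $\crossedprod\cong\BeurlingTypeAlg$. Finally, the squeeze just obtained forces $\|\tilde\lambda\rtimes\Lambda(f)\|=\sigma^{\mathcal{R}}(f)=\|q^{\mathcal{R}}(f)\|^{\mathcal{R}}$, so the induced homomorphism $(\tilde\lambda\rtimes\Lambda)^{\mathcal{R}}$ is isometric on the dense subspace $q^{\mathcal{R}}(C_{c}(G,A))$ and hence is an isometric embedding of Banach algebras. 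I expect no serious obstacle: the only point requiring genuine care is to track how the hypothesis $(\tilde\lambda,\Lambda)\in\mathcal{R}$ upgrades the two-sided norm \emph{equivalence} of Theorem \ref{thm:faithful-representation} into an \emph{isometric} statement, which is exactly what pins $\sigma^{\mathcal{R}}$ down to $\|\cdot\|_{1,\omega}$ rather than merely rendering it comparable.
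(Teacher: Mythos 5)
Your proposal is correct and follows essentially the same route as the paper: both arguments specialize the inequality chain (\ref{eq:inequalitychain}) of Theorem \ref{thm:faithful-representation} (equivalently, inequality (\ref{eq:seminorm-bounded-by-beurlingnorm}) plus Proposition \ref{pro:reverseinequality}) to $C_{\alpha}=M=1$ and $\inf_{W\in\mathcal{Z}}\sup_{r\in W}\omega(r)=1$, and use $(\tilde{\lambda},\Lambda)\in\mathcal{R}$ to get $\|\tilde{\lambda}\rtimes\Lambda\|\leq1$, so that the sandwich collapses to the equality $\sigma^{\mathcal{R}}(f)=\|\tilde{\lambda}\rtimes\Lambda(f)\|=\|f\|_{1,\omega}$. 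Your additional spelling-out of the first paragraph's claims via Corollary \ref{cor:pitilde-Lambda-covariant-non-deg} and of the isometry of $(\tilde{\lambda}\rtimes\Lambda)^{\mathcal{R}}$ on the dense image of $q^{\mathcal{R}}$ is exactly what the paper leaves implicit in ``the result now follows.''
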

\begin{proof}
Since $(\tilde{\lambda},\Lambda)\in\mathcal{R}$, we have $\|\tilde{\lambda}\rtimes\Lambda\|\leq1$,
and by hypothesis $C^{\mathcal{R}}\leq1$. Therefore, by Theorem \ref{thm:faithful-representation},
for every $f\in C_{c}(G,A)$, 
\[
\|f\|_{1,\omega}\leq\|\tilde{\lambda}\rtimes\Lambda(f)\|\leq\|\tilde{\lambda}\rtimes\Lambda\|\sigma^{\mathcal{R}}(f)\leq C^{\mathcal{R}}\|\tilde{\lambda}\rtimes\Lambda\|\|f\|_{1,\omega}\leq\|f\|_{1,\omega}.
\]
We conclude that $C^{\mathcal{R}}=\|\tilde{\lambda}\rtimes\Lambda\|=1$,
and the result now follows.\end{proof}
\begin{rem}
Certainly if the weight $\omega:G\to[0,\infty)$ is continuous in
$e\in G$ and $\omega(e)=1$, then $\inf_{W\in\mathcal{Z}}\sup_{r\in W}\omega(r)=1$,
for example if $\omega$ is taken to be a continuous positive character
of $G$.
\end{rem}
{}
\begin{rem}
\label{rem:induced-related-to-left-regular}We note that the representation
\[
(\tilde{\lambda}\rtimes\Lambda)^{\mathcal{R}}:\BeurlingTypeAlg\to B(\BeurlingTypeAlg)
\]
 does not equal the left regular representation of $\BeurlingTypeAlg$
in general, but they are always conjugate. To see this, define, for
$h\in C_{c}(G,A)$ and $s\in G$, $\check{h}(s):=\alpha_{s^{-1}}(h(s))$,
$\hat{h}(s):=\alpha_{s}(h(s))$. Then $\hat{\cdot}:C_{c}(G,A)\to C_{c}(G,A)$
and $\check{\cdot}:C_{c}(G,A)\to C_{c}(G,A)$ are mutual inverses
and, since $\alpha$ is uniformly bounded, extend to mutually inverse
Banach space isomorphisms of $\BeurlingTypeAlg$ onto itself. Then
$(\tilde{\lambda}\rtimes\Lambda)^{\mathcal{R}}$ and the left regular
representation $\lambda$ of $\BeurlingTypeAlg$ are conjugate under
$\hat{\cdot}$. Indeed by Lemma \ref{lem:point-evaluations-pulled-through-integral},
for $f,h\in C_{c}(G,A)$ and $s\in G$, 
\begin{eqnarray*}
\left(\tilde{\lambda}\rtimes\Lambda(f)\check{h}\right)^{\wedge}(s) & = & \alpha_{s}\left([\tilde{\lambda}\rtimes\Lambda(f)\check{h}](s)\right)\\
 & = & \alpha_{s}\left(\int_{G}\alpha_{s}^{-1}(f(r))\check{h}(r^{-1}s)\, dr\right)\\
 & = & \alpha_{s}\left(\int_{G}\alpha_{s}^{-1}(f(r))\alpha_{s^{-1}r}(h(r^{-1}s))\, dr\right)\\
 & = & \int_{G}f(r)\alpha_{r}(h(r^{-1}s))\, dr\\
 & = & [f*h](s)\\
 & = & [\lambda(f)h](s).
\end{eqnarray*}
Hence $(\tilde{\lambda}\rtimes\Lambda)^{\mathcal{R}}$ and the left
regular representation 
\[
\lambda:\BeurlingTypeAlg\to B(\BeurlingTypeAlg)
\]
 of $\BeurlingTypeAlg$ are conjugate as claimed. Note that $\hat{\cdot}$
is the identity if $\alpha=\textup{triv}$, hence in that case $(\tilde{\lambda}\rtimes\Lambda)^{\mathcal{R}}=\lambda$.
\end{rem}
We continue the main line with the following trivial but important
observation: If $(\tilde{\lambda},\Lambda)\in\mathcal{R}$, for example,
by taking $\mathcal{R}:=\{(\tilde{\lambda},\Lambda)\}$, then certainly
$(\tilde{\lambda},\Lambda)$ is $\mathcal{R}$-continuous, hence the
conclusions in Theorem \ref{thm:faithful-representation} hold, and
in particular the algebras $\crossedprod$ and $\BeurlingTypeAlg$
are topologically isomorphic. A similar remark is applicable to Corollary
\ref{cor:crossedprod_is_Beulringalgebra}, giving sufficient conditions
for the mentioned topological isomorphism to be isometric. Hence we
have the following: 
\begin{thm}
\label{thm:Choosing-R-correctly-Crossed-Products-are-beurling}Let
$(A,G,\alpha)$ be a Banach algebra dynamical system where $A$ has
a bounded approximate right identity and $\alpha$ is uniformly bounded.
Let $\omega$ be a weight on $G$ and let the non-degenerate continuous
covariant representation $(\tilde{\lambda},\Lambda)$ of $(A,G,\alpha)$
on $L^{1}(G,A,\omega)$ be as yielded by Definition \ref{def:induced_rep_and_translation_rep}\textup{.}
Then the generalized Beurling algebra $\BeurlingTypeAlg$ and the
crossed product $\crossedprod$ with $\mathcal{R}:=\{(\tilde{\lambda},\Lambda)\}$
are topologically isomorphic via an isomorphism that is the identity
on $C_{c}(G,A)$.

Furthermore, the map $\tilde{\lambda}\rtimes\Lambda:C_{c}(G,A)\to B(L^{1}(G,A,\omega))$
extends to a topological embedding of $\BeurlingTypeAlg$ into $B(L^{1}(G,A,\omega))$.

If $A$ has a $1$-bounded two-sided approximate identity, $\alpha$
lets $G$ act as isometries on $A$ and \textup{$\inf_{W\in\mathcal{Z}}\sup_{r\in W}\omega(r)=1$,
}with $\mathcal{Z}$ denoting a neighbourhood base of $e\in G$ of
which all elements are contained in a fixed compact set\textup{,}
then the isomorphism between $\crossedprod$ and $\BeurlingTypeAlg$
is an isometry, and the embedding of $\BeurlingTypeAlg$ into $B(L^{1}(G,A,\omega))$
is isometric.\end{thm}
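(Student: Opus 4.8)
The plan is to obtain the entire statement as an immediate consequence of Theorem \ref{thm:faithful-representation} and Corollary \ref{cor:crossedprod_is_Beulringalgebra}, applied to the singleton class $\mathcal{R}:=\{(\tilde{\lambda},\Lambda)\}$; all the genuine work has already been carried out in Proposition \ref{pro:reverseinequality} and Theorem \ref{thm:faithful-representation}. First I would verify that this singleton fits the hypotheses of those results. By Corollary \ref{cor:pitilde-Lambda-covariant-non-deg}, the pair $(\tilde{\lambda},\Lambda)$ is a non-degenerate continuous covariant representation of $(A,G,\alpha)$ on $L^{1}(G,A,\omega)$ with $\|\tilde{\lambda}\|\leq C_{\alpha}\|\lambda\|\leq C_{\alpha}$ (using that the left regular representation of $A$ is contractive) and $\|\Lambda_{s}\|\leq\omega(s)$ for all $s\in G$. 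Since $\omega$ is bounded on compact sets, $\mathcal{R}$ is uniformly bounded, and moreover $C^{\mathcal{R}}=\|\tilde{\lambda}\|<\infty$ and $\nu^{\mathcal{R}}\leq\omega$.

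The crucial but trivial point is that $(\tilde{\lambda},\Lambda)$ is automatically $\mathcal{R}$-continuous, simply because it belongs to $\mathcal{R}$: by the very definition of $\sigma^{\mathcal{R}}$ as a supremum over the class, $\|\tilde{\lambda}\rtimes\Lambda(f)\|\leq\sigma^{\mathcal{R}}(f)$ for every $f\in C_{c}(G,A)$, so its integrated form is $\sigma^{\mathcal{R}}$-bounded. With this observation all hypotheses of Theorem \ref{thm:faithful-representation} are met, and I would simply quote its conclusions: $\sigma^{\mathcal{R}}$ is a norm on $C_{c}(G,A)$ equivalent to $\|\cdot\|_{1,\omega}$, so that the identity on $C_{c}(G,A)$ extends to a topological isomorphism between $\crossedprod$ and $\BeurlingTypeAlg$, and $\tilde{\lambda}\rtimes\Lambda$ extends to the asserted topological embedding into $B(L^{1}(G,A,\omega))$. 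Composing this embedding with the inverse isomorphism yields the claimed embedding of $\BeurlingTypeAlg$, establishing the first two assertions.

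For the isometric refinement I would invoke Corollary \ref{cor:crossedprod_is_Beulringalgebra} with the same $\mathcal{R}$. The extra hypotheses there are exactly those now in force: the $1$-bounded two-sided approximate identity gives $M=1$ (and in particular a $1$-bounded approximate right identity), the action by isometries gives $C_{\alpha}=1$, and $\inf_{W\in\mathcal{Z}}\sup_{r\in W}\omega(r)=1$ is assumed. It then only remains to note that $C^{\mathcal{R}}\leq 1$, which follows from $\|\tilde{\lambda}\|\leq C_{\alpha}\|\lambda\|\leq 1$; together with $(\tilde{\lambda},\Lambda)\in\mathcal{R}$ and $\nu^{\mathcal{R}}\leq\omega$, the corollary delivers $\sigma^{\mathcal{R}}(f)=\|f\|_{1,\omega}$, whence both the isomorphism $\crossedprod\cong\BeurlingTypeAlg$ and the embedding into $B(L^{1}(G,A,\omega))$ are isometric.

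There is essentially no obstacle in this argument, since the substantive estimates reside in Theorem \ref{thm:faithful-representation}. The only points demanding a moment's care are the bookkeeping of the constants in the isometric case---confirming that the action-by-isometries hypothesis forces both $C_{\alpha}=1$ and $C^{\mathcal{R}}=\|\tilde{\lambda}\|\leq 1$---and the trivial check that choosing $\mathcal{R}$ to be a singleton respects the standing non-emptiness and uniform-boundedness conventions, which it plainly does.
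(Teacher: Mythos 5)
Your proposal is correct and follows essentially the same route as the paper: the paper likewise observes, in the paragraph immediately preceding the theorem, that taking $\mathcal{R}:=\{(\tilde{\lambda},\Lambda)\}$ makes $(\tilde{\lambda},\Lambda)$ trivially $\mathcal{R}$-continuous, so that Theorem \ref{thm:faithful-representation} yields the topological isomorphism and embedding, and Corollary \ref{cor:crossedprod_is_Beulringalgebra} yields the isometric refinement. Your bookkeeping of the constants ($\|\tilde{\lambda}\|\leq C_{\alpha}\|\lambda\|\leq C_{\alpha}$, hence $C^{\mathcal{R}}\leq1$ in the isometric case) is exactly the verification the paper leaves implicit.
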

\begin{rem}
As noted in Remark \ref{rem:induced-related-to-left-regular}, when
$\alpha=\textup{triv}$, then $(\tilde{\lambda}\rtimes\Lambda)^{\mathcal{R}}$
equals the left regular representation $\lambda:\BeurlingTypeAlg\to B(\BeurlingTypeAlg)$
of $\BeurlingTypeAlg$.
\end{rem}
{}
\begin{rem}
We note that, for $(A,G,\alpha)=(\mathbb{K},G,\textup{triv})$, the
second part of Theorem \ref{thm:Choosing-R-correctly-Crossed-Products-are-beurling}
asserts that $(\mathbb{K}\rtimes_{\textup{triv}}G)^{\mathcal{R}}$
is isometrically isomorphic to the classical Beurling algebra $L^{1}(G,\omega)$,
provided that $\inf_{W\in\mathcal{Z}}\sup_{r\in W}\omega(r)=1$ (which
is certainly true if $\omega$ is continuous at $e\in G$ and $\omega(e)=1$).
In particular $L^{1}(G)$ is isometrically isomorphic to a crossed
product. Under the condition $\inf_{W\in\mathcal{Z}}\sup_{r\in W}\omega(r)=1$,
combining Remark \ref{rem:induced-related-to-left-regular} and Theorem
\ref{thm:Choosing-R-correctly-Crossed-Products-are-beurling} also
shows that the left regular representation of $L^{1}(G,\omega)$ is
an isometric embedding of $L^{1}(G,\omega)$ into $B(L^{1}(G,\omega))$. 
\end{rem}
Hence, provided that $A$ has a bounded approximate right identity,
the generalized Beurling algebras $\BeurlingTypeAlg$, and in particular
(when $A=\mathbb{K}$) the classical Beurling algebras $L^{1}(G,\omega)$,
are isomorphic to a crossed product associated with a Banach algebra
dynamical system. Therefore, in the case where the algebra $A$ has
a two-sided identity, the General Correspondence Theorem (Theorem
\ref{thm:General-Correspondence-Theorem}) determines the non-degenerate
bounded representations of generalized Beurling algebras. This we
will elaborate on in the rest of the section. In cases where the algebra
is trivial, i.e., $A=\mathbb{K}$, we regain classical results on
the representation theory of $L^{1}(G)$ and other classical Beurling
algebras. 

Assume, in addition to the hypothesis in Theorem \ref{thm:faithful-representation},
that $A$ has an $M$-bounded two-sided approximate identity and that
all continuous covariant representations in $\mathcal{R}$ are non-degenerate.
In that case, we claim that the non-degenerate $\mathcal{R}$-continuous
covariant representations are precisely the non-degenerate continuous
covariant representations $(\pi,U)$ of $(A,G,\alpha)$, with no further
restriction on $\pi$, but with $U$ such that $\|U_{r}\|\leq C_{U}\omega(r)$
for all $r\in G$ and a $U$-dependent constant $C_{U}$. To see this,
we start by noting that, for $f\in C_{c}(G,A)$,
\begin{eqnarray*}
\|\pi\rtimes U(f)\| & \leq & \int_{G}\|\pi(f(r))\|\|U_{r}\|\, dr\\
 & \leq & \int_{G}\|\pi\|\|f(r)\|C_{U}\omega(r)\, dr\\
 & \leq & C_{U}\|\pi\|\int_{G}\|f(r)\|\omega(r)\, dr\\
 & = & C_{U}\|\pi\|\|f\|_{1,\omega}\\
 & \leq & C_{(\pi,U)}'\sigma^{\mathcal{R}}(f)
\end{eqnarray*}
for some $C_{(\pi,U)}'\geq0$, since $\|\cdot\|_{1,\omega}$ and $\sigma^{\mathcal{R}}$
are equivalent.

For the converse, we use that $A$ has a bounded approximate left
identity and that $\mathcal{R}$ consists of non-degenerate continuous
covariant representations. If $(\pi,U)$ is a non-degenerate $\mathcal{R}$-continuous
representation of $(A,G,\alpha)$, then the General Correspondence
Theorem (Theorem \ref{thm:General-Correspondence-Theorem}) asserts
that 
\[
(\pi,U)=(\overline{(\pi\rtimes U)^{\mathcal{R}}}\circ i_{A}^{\mathcal{R}},\overline{(\pi\rtimes U)^{\mathcal{R}}}\circ i_{G}^{\mathcal{R}}),
\]
where $\overline{(\pi\rtimes U)^{\mathcal{R}}}:\leftcent(\crossedprod)\to B(X_{\pi})$
is the non-degenerate bounded representation induced by the non-degenerate
bounded representation $(\pi\rtimes U)^{\mathcal{R}}:\crossedprod\to B(X_{\pi})$.
However if $T:\crossedprod\to B(X)$ is any non-degenerate bounded
representation, then \cite[Proposition 7.1]{2011arXiv1104.5151D}
asserts that there exists a constant $C_{T}:=M_{l}^{\mathcal{R}}\|T\|$,
with $M_{l}^{\mathcal{R}}$ a bound for a bounded approximate left
identity in $\crossedprod$, such that 
\begin{equation}
\|\overline{T}\circ i_{G}^{\mathcal{R}}(r)\|\leq C_{T}\nu^{\mathcal{R}}(r)\leq C_{T}\omega(r)\quad(r\in G).\label{eq:group-rep-bound}
\end{equation}
Therefore, $r\mapsto\|U_{r}\|$ is bounded by a multiple of $\omega$,
as claimed.

We now take $\mathcal{R}:=\{(\tilde{\lambda},\Lambda)\}$ as in Theorem
\ref{thm:Choosing-R-correctly-Crossed-Products-are-beurling}. Theorem
\ref{thm:Choosing-R-correctly-Crossed-Products-are-beurling} shows
that the non-degenerate bounded representations of $\BeurlingTypeAlg$
can be identified with those of $\crossedprod$. By the General Correspondence
Theorem (Theorem \ref{thm:General-Correspondence-Theorem}) the latter
are in natural bijection with the non-degenerate $\mathcal{R}$-continuous
covariant representations of $(A,G,\alpha)$ and these we have just
described. Hence the non-degenerate bounded representations of $\BeurlingTypeAlg$
are in natural bijection with pairs $(\pi,U)$ as above. Furthermore,
slightly simplified versions of \cite[Equations (8.1) and (8.2)]{2011arXiv1104.5151D}
(cf.\,Remark \ref{rem:eq-8.1and8.2-can-be-simplified-by-deleting-alpha})
give explicit formulas for retrieving $(\pi,U)$ from a non-degenerate
bounded representation $T$ of $\crossedprod\simeq\BeurlingTypeAlg$.
Combining all this, we obtain the following correspondence between
the non-degenerate continuous covariant representations of $(A,G,\alpha)$
and the non-degenerate bounded representations of the generalized
Beurling algebra $\BeurlingTypeAlg$:
\begin{thm}
\label{thm:continuous-non-deg-covars-are-R-continuous}Let $(A,G,\alpha)$
be a Banach algebra dynamical system where $A$ has a two-sided approximate
identit\textup{y} and $\alpha$ is uniformly bounded by $C_{\alpha}$.
Let $\omega$ be a weight on $G$. Then the following maps are mutual
inverses between the non-degenerate continuous covariant representations
$(\pi,U)$ of $(A,G,\alpha)$ on a Banach space\textup{ $X$}, satisfying
$\|U_{r}\|\leq C_{U}\omega(r)$ for some $C_{U}\geq0$ and all $r\in G$,
and the non-degenerate bounded representations $T:\BeurlingTypeAlg\to B(X)$
of the generalized Beurling algebra $\BeurlingTypeAlg$ on $X$: 
\[
(\pi,U)\mapsto\left(f\mapsto\int_{G}\pi(f(r))U_{r}\, dr\right)=:T^{(\pi,U)}\quad(f\in C_{c}(G,A)),
\]
determining a non-degenerate bounded representation $T^{(\pi,U)}$
of the generalized Beurling algebra $\BeurlingTypeAlg$, and, 
\[
T\mapsto\left(\begin{array}{l}
a\mapsto\textup{SOT-lim}_{(V,i)}T(z_{V}\otimes au_{i}),\\
s\mapsto\textup{SOT-lim}_{(V,i)}T(z_{V}(s^{-1}\cdot)\otimes u_{i})
\end{array}\right)=:(\pi^{T},U^{T}),
\]
where $\mathcal{Z}$ is a neighbourhood base of $e\in G$, of which
all elements are contained in a fixed compact subset of $G$, $z_{V}\in C_{c}(G,A)$
is chosen such that $z_{V}\geq0$, supported in $V\in\mathcal{Z}$,
$\int_{G}z_{V}(r)dr=1$, and $(u_{i})$ is any bounded approximate
left identity of $A$.

Furthermore, if $A$ has an $M$-bounded approximate left identity,
then the following bounds for $T^{(\pi,U)}$ and $(\pi^{T},U^{T})$
hold:
\begin{enumerate}
\item $\|T^{(\pi,U)}\|\leq C_{U}\|\pi\|$,
\item \textup{$\|\pi^{T}\|\leq\left(\inf_{V\in\mathcal{Z}}\sup_{r\in V}\omega(r)\right)\|T\|$,}
\item $\|U_{s}^{T}\|\leq M\left(\inf_{V\in\mathcal{Z}}\sup_{r\in V}\omega(r)\right)\|T\|\,\omega(s)\quad(s\in G)$.
\end{enumerate}
\end{thm}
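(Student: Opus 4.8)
The plan is to identify the asserted correspondence as the composition of three bijections that are already available, and then to turn the resulting abstract formulas into the concrete ones and read off the three norm estimates. First I would set $\mathcal{R}:=\{(\tilde{\lambda},\Lambda)\}$, with $(\tilde{\lambda},\Lambda)$ as in Definition \ref{def:induced_rep_and_translation_rep} for the left regular representation $\lambda$ of $A$. Since $A$ has a two-sided approximate identity and $\alpha$ is uniformly bounded, Corollary \ref{cor:pitilde-Lambda-covariant-non-deg} makes $(\tilde{\lambda},\Lambda)$ a non-degenerate continuous covariant representation with $\|\Lambda_{s}\|\leq\omega(s)$, so $\mathcal{R}$ is a uniformly bounded class of non-degenerate continuous covariant representations with $\nu^{\mathcal{R}}\leq\omega$, $C^{\mathcal{R}}<\infty$, and $(\tilde{\lambda},\Lambda)$ trivially $\mathcal{R}$-continuous. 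Theorem \ref{thm:Choosing-R-correctly-Crossed-Products-are-beurling} then yields a topological isomorphism $\crossedprod\simeq\BeurlingTypeAlg$ that is the identity on $C_{c}(G,A)$, so that representations of the two algebras are identified through their common action on the dense subspace $C_{c}(G,A)$. Combining this with the General Correspondence Theorem (Theorem \ref{thm:General-Correspondence-Theorem}) and with the characterization, established just before the statement, of the non-degenerate $\mathcal{R}$-continuous covariant representations as precisely the non-degenerate continuous $(\pi,U)$ with $\|U_{r}\|\leq C_{U}\omega(r)$, gives the bijection; since each constituent is a bijection, the two displayed maps are mutual inverses once their explicit forms are verified.

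For the forward map I would trace $(\pi,U)\mapsto(\pi\rtimes U)^{\mathcal{R}}$ through the isomorphism: the resulting representation of $\BeurlingTypeAlg$ acts on $C_{c}(G,A)$ as $f\mapsto\int_{G}\pi(f(r))U_{r}\,dr$ and extends by continuity, which is exactly $T^{(\pi,U)}$. Bound (1) is then immediate, since for $f\in C_{c}(G,A)$ one has $\|\pi\rtimes U(f)\|\leq\int_{G}\|\pi\|\,\|f(r)\|\,C_{U}\omega(r)\,dr=C_{U}\|\pi\|\,\|f\|_{1,\omega}$, and $C_{c}(G,A)$ is dense. For the inverse map, the General Correspondence Theorem sends $T$ to $(\overline{T}\circ i_{A}^{\mathcal{R}},\overline{T}\circ i_{G}^{\mathcal{R}})$; I expect the main obstacle to be rewriting these multiplier-based expressions as the concrete SOT-limits in the statement. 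Here I would appeal to \cite[Equations (8.1) and (8.2)]{2011arXiv1104.5151D}: writing a bounded approximate left identity of $\crossedprod$ in terms of the elements $z_{V}\otimes u_{i}$, one has $i_{A}(a)(z_{V}\otimes u_{i})=z_{V}\otimes au_{i}$, while $i_{G}(s)(z_{V}\otimes u_{i})$ is $z_{V}(s^{-1}\cdot)$ tensored with $\alpha_{s}(u_{i})$; the factor $\alpha_{s}(u_{i})$ may be replaced by $u_{i}$ in the present situation by Remark \ref{rem:eq-8.1and8.2-can-be-simplified-by-deleting-alpha}, producing exactly $(\pi^{T},U^{T})$.

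It then remains to establish bounds (2) and (3) by estimating the approximating elements, the existence of whose SOT-limits is already guaranteed by the General Correspondence Theorem. Because $z_{V}\geq0$ is supported in $V$ with $\int_{G}z_{V}(r)\,dr=1$, one gets $\|z_{V}\otimes au_{i}\|_{1,\omega}=\|au_{i}\|\int_{G}z_{V}(r)\omega(r)\,dr\leq\|au_{i}\|\sup_{r\in V}\omega(r)$; letting $i$ vary so that $\|au_{i}\|\to\|a\|$, which is where the two-sided (rather than merely left) approximate identity is used to avoid an extra factor $M$, and then taking the infimum over $V\in\mathcal{Z}$ yields (2). For (3), the left-invariant substitution $t=sr$ together with submultiplicativity of $\omega$ gives
\[
\|z_{V}(s^{-1}\cdot)\otimes u_{i}\|_{1,\omega}=\|u_{i}\|\int_{G}z_{V}(r)\omega(sr)\,dr\leq M\,\omega(s)\sup_{r\in V}\omega(r),
\]
and passing to the SOT-limit and the infimum over $V\in\mathcal{Z}$ produces (3); here only the $M$-bound on $(u_{i})$ is needed, which explains the presence of $M$ in (3) and its absence in (2).
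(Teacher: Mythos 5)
Your proposal is correct and follows essentially the same route as the paper: the bijection is obtained by composing the isomorphism $\crossedprod\simeq\BeurlingTypeAlg$ for $\mathcal{R}=\{(\tilde{\lambda},\Lambda)\}$ with the General Correspondence Theorem and the characterization of the $\mathcal{R}$-continuous covariant representations as those with $\|U_{r}\|\leq C_{U}\omega(r)$, the explicit formulas coming from \cite[Equations (8.1) and (8.2)]{2011arXiv1104.5151D} and Remark \ref{rem:eq-8.1and8.2-can-be-simplified-by-deleting-alpha}. Your estimates for the three bounds, including the use of a two-sided approximate identity in (2) to get $\|au_{i}\|\to\|a\|$ and of left invariance plus submultiplicativity in (3), are exactly the paper's arguments.
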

\begin{proof}
Except for the claimed bounds for $\|T^{U}\|$, $\|\pi^{T}\|$ and
$\|U^{T}\|$, all statements have been proven preceding the statement
of the theorem. We will now establish these three bounds.

We prove (1). Let $(\pi,U)$ be a non-degenerate continuous covariant
representations of $(A,G,\alpha)$ on a Banach space $X$, satisfying
$\|U_{r}\|\leq C_{U}\omega(r)$ for some $C_{U}\geq0$ and all $r\in G$.
Then, for any $f\in C_{c}(G,A)$,

\begin{eqnarray*}
\|T^{(\pi,U)}(f)\| & = & \left\Vert \int_{G}\pi(f(r))U_{r}\, dr\right\Vert \\
 & \leq & \int_{G}\|\pi\|\|f(r)\|\|U_{r}\|\, dr\\
 & \leq & \|\pi\|C_{U}\int_{G}\|f(r)\|\omega(r)\, dr\\
 & = & \|\pi\|C_{U}\|f\|_{1,\omega}.
\end{eqnarray*}
Therefore $\|T^{(\pi,U)}\|\leq\|\pi\|C_{U}$. 

We prove (2). Let $T:\BeurlingTypeAlg\to B(X)$ be a non-degenerate
bounded representations of the generalized Beurling algebra $\BeurlingTypeAlg$
on $X$. Choose a bounded two-sided approximate identity $(u_{i})$
of $A$. Then, for any $a\in A$,

\begin{eqnarray*}
\|T(z_{V}\otimes au_{i})\| & \leq & \|T\|\|z_{V}\otimes au_{i}\|_{1,\omega}\\
 & \leq & \|T\|\int_{G}z_{V}(r)\|au_{i}\|\omega(r)\, dr\\
 & = & \|T\|\|au_{i}\|\int_{G}z_{V}(r)\omega(r)\, dr\\
 & \leq & \|T\|\|au_{i}\|\sup_{r\in V}\omega(r)\int_{G}z_{V}(r)\, dr\\
 & = & \|T\|\|au_{i}\|\sup_{r\in V}\omega(r).
\end{eqnarray*}
Since, in particular, $(u_{i})$ is an approximate right identity
of $A$, for any $\varepsilon_{1}>0$, there exists an index $i_{0}$
such that $i\geq i_{0}$ implies $\|au_{i}\|\leq\|a\|+\varepsilon_{1}$.
Also, for any $\varepsilon_{2}>0$, there exists some $V_{0}\in\mathcal{Z}$
such that $\sup_{r\in V_{0}}\omega(r)\leq\inf_{V\in\mathcal{Z}}\sup_{r\in V}\omega(r)+\varepsilon_{2}$.
Now, if $(V,i)\geq(V_{0},i_{0})$, then $V_{0}\supseteq V$ and $i\geq i_{0}$,
and hence 
\begin{eqnarray*}
\|T(z_{V}\otimes au_{i})\| & \leq & \|T\|\|au_{i}\|\sup_{r\in V}\omega(r)\\
 & \leq & \|T\|\|au_{i}\|\sup_{r\in V_{0}}\omega(r)\\
 & \leq & \|T\|(\|a\|+\varepsilon_{1})\left(\inf_{V\in\mathcal{Z}}\sup_{r\in V}\omega(r)+\varepsilon_{2}\right).
\end{eqnarray*}
Therefore, if $x\in X$, then 
\begin{eqnarray*}
\|\pi^{T}(a)x\| & = & \lim_{(V,i)}\|T(z_{V}\otimes au_{i})x\|\\
 & = & \lim_{(V,i)\geq(V_{0},i_{0})}\|T(z_{V}\otimes au_{i})x\|\\
 & \leq & \|T\|(\|a\|+\varepsilon_{1})\left(\inf_{V\in\mathcal{Z}}\sup_{r\in V}\omega(r)+\varepsilon_{2}\right)\|x\|.
\end{eqnarray*}
Since $\varepsilon_{1}$ and $\varepsilon_{2}$ we chosen arbitrarily,
$\|\pi^{T}\|\leq\|T\|\left(\inf_{V\in\mathcal{Z}}\sup_{r\in V}\omega(r)\right)$
now follows. 

We prove (3). Let $(u_{i})$ be an $M$-bounded approximate left identity
of $A$. Fix $s\in G$. Let $\varepsilon>0$ be arbitrary and let
$V_{0}\in\mathcal{Z}$ be such that $\sup_{r\in V_{0}}\omega(r)\leq\inf_{V\in\mathcal{Z}}\sup_{r\in V}\omega(r)+\varepsilon$.
Fix some index $i_{0}$, then, for every $(V,i)\geq(V_{0},i_{0})$, 

\begin{eqnarray*}
\|T(z_{V}(s^{-1}\cdot)\otimes u_{i})\| & \leq & \|T\|\|z_{V}(s^{-1}\cdot)\otimes u_{i}\|_{1,\omega}\\
 & = & \|T\|\int_{G}z_{V}(s^{-1}r)\|u_{i}\|\omega(r)\: dr\\
 & \leq & M\|T\|\int_{G}z_{V}(r)\omega(sr)\: dr\\
 & \leq & M\|T\|\int_{G}z_{V}(r)\omega(s)\omega(r)\: dr\\
 & = & M\|T\|\omega(s)\int_{V}z_{V}(r)\omega(r)\: dr\\
 & \leq & M\|T\|\left(\sup_{r\in V_{0}}\omega(r)\right)\omega(s)\int_{V}z_{V}(r)\: dr\\
 & \leq & M\|T\|\left(\inf_{V\in\mathcal{Z}}\sup_{r\in V}\omega(r)+\varepsilon\right)\omega(s).
\end{eqnarray*}
Therefore, if $x\in X$, then
\begin{eqnarray*}
\|U_{s}^{T}x\| & = & \lim_{(V,i)}\|T(z_{V}(s^{-1}\cdot)\otimes u_{i})x\|\\
 & = & \lim_{(V,i)\geq(V_{0},i_{0})}\|T(z_{V}(s^{-1}\cdot)\otimes u_{i})x\|\\
 & \leq & M\|T\|\left(\inf_{V\in\mathcal{Z}}\sup_{r\in V}\omega(r)+\varepsilon\right)\omega(s)\|x\|.
\end{eqnarray*}
Since $\varepsilon>0$ was chosen arbitrarily, $\|U_{r}^{T}\|\leq M\|T\|\left(\inf_{V\in\mathcal{Z}}\sup_{r\in V}\omega(r)\right)\omega(s)$
now follows.\end{proof}
\begin{rem}
\label{rem:eq-8.1and8.2-can-be-simplified-by-deleting-alpha}Our reconstruction
formulas in Theorem \ref{thm:continuous-non-deg-covars-are-R-continuous}
differs slightly from those given in \cite[Equations (8.1) and (8.2)]{2011arXiv1104.5151D},
where the reconstruction formula for $U^{T}$ is given as 
\begin{equation}
s\mapsto\textup{SOT-lim}_{(V,i)}T(z_{V}(s^{-1}\cdot)\otimes\alpha_{s}(u_{i})),\label{eq:CPI-group-reconstruction}
\end{equation}
with $(u_{i})$ any bounded approximate left identity of $A$. However,
if $(u_{i})$ is any bounded approximate left identity of $A$ and
$s\in G$ is fixed, then $(\alpha_{s^{-1}}(u_{i}))$ is also a bounded
approximate left identity of $A$, and using this particular choice
in (\ref{eq:CPI-group-reconstruction}) gives the formula in Theorem
\ref{thm:continuous-non-deg-covars-are-R-continuous}.
\end{rem}
For the Banach algebra dynamical system $(\mathbb{K},G,\textup{triv})$
and weight $\omega$ on $G$, Theorem \ref{thm:continuous-non-deg-covars-are-R-continuous}
simplifies. We collect the statements from Theorem \ref{thm:continuous-non-deg-covars-are-R-continuous}
concerning representations and some material from Remark \ref{rem:induced-related-to-left-regular},
Corollary \ref{cor:crossedprod_is_Beulringalgebra} in the following
result, which contains a few classical results as special cases: For
one-dimensional representations, the result reduces to the bijection
between $\omega$-bounded characters of $G$ and multiplicative functionals
of the Beurling algebra $L^{1}(G,\omega)$, see, e.g., \cite[Theorem 2.8.2]{Kaniuth}
(where, contrary to our general groups, $G$ is assumed to be abelian).
In the case where $\omega$ is the constant $1$, the result reduces
to the classical bijection between uniformly bounded strongly continuous
representations of $G$ and non-degenerate bounded representations
of $L^{1}(G)$, see, e.g., \cite[Assertion VI.1.32]{Helemski}.
\begin{cor}
\label{cor:classical-L1-result}Let $\omega$ be a weight on $G$.
With $(z_{V})$ as in Theorem \ref{thm:continuous-non-deg-covars-are-R-continuous},
the maps 
\[
U\mapsto\left(f\mapsto\int_{G}f(r)U_{r}\, dr\right)=:T^{U}\quad(f\in C_{c}(G)),
\]
determining a non-degenerate bounded representation $T^{U}$\!\!
of the Beurling algebra $L^{1}(G,\omega)$, and 
\[
T\mapsto\left(s\mapsto\textup{SOT-lim}_{V}T(z_{V}(s^{-1}\cdot))\right)=:U^{T}
\]
are mutual inverses between the strongly continuous group representations
$U$ of $G$ on a Banach space\textup{ $X$}, satisfying $\|U_{r}\|\leq C_{U}\omega(r)$,
for some $C_{U}\geq0$ and all $r\in G$, and the non-degenerate bounded
representations $T:L^{1}(G,\omega)\to B(X)$ of the Beurling algebra
$L^{1}(G,\omega)$ on $X$\textup{).}

If the weight satisfies $\inf_{W\in\mathcal{Z}}\sup_{r\in W}\omega(r)=1$,
where $\mathcal{Z}$ is a neighbourhood base of $e\in G$, of which
all elements are contained in a fixed compact subset of $G$, then
$\|T^{U}\|=\sup_{r\in G}\|U_{r}\|/\omega(r)$ and $\|U_{r}^{T}\|\leq\|T\|\omega(r)$
for all $r\in G$.\end{cor}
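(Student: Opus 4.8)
The plan is to obtain this corollary as the specialization of Theorem \ref{thm:continuous-non-deg-covars-are-R-continuous} to the Banach algebra dynamical system $(\mathbb{K},G,\textup{triv})$, for which $\BeurlingTypeAlg$ reduces to the classical Beurling algebra $L^{1}(G,\omega)$. First I would record that $\mathbb{K}$ possesses a $1$-bounded two-sided approximate identity, namely its unit, so that the hypotheses of Theorem \ref{thm:continuous-non-deg-covars-are-R-continuous}, as well as those for its norm estimates, are met with $C_{\alpha}=1$ and $M=1$. The crucial reduction is that every non-degenerate representation $\pi:\mathbb{K}\to B(X)$ is forced to be $\pi(c)=c\,\textup{Id}_{X}$: indeed $\pi(1)$ is a bounded idempotent whose range is dense, hence closed and equal to $X$, so $\pi(1)=\textup{Id}_{X}$ and $\|\pi\|=1$. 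Since $\alpha=\textup{triv}$ and scalar operators are central, the covariance condition is then automatically satisfied, and so the non-degenerate continuous covariant representations of $(\mathbb{K},G,\textup{triv})$ are precisely the strongly continuous representations $U$ of $G$, with the growth condition $\|U_{r}\|\leq C_{U}\omega(r)$ unchanged.

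Next I would verify that the two maps of Theorem \ref{thm:continuous-non-deg-covars-are-R-continuous} specialize to the stated ones. Since $\pi(f(r))=f(r)$ for $f\in C_{c}(G)$, the integrated form $T^{(\pi,U)}$ becomes $f\mapsto\int_{G}f(r)U_{r}\,dr=T^{U}$; and since one may take the approximate identity $u_{i}\equiv1$, the factor $z_{V}\otimes u_{i}$ reduces to $z_{V}$ and the reconstruction formula collapses to $U_{s}^{T}=\textup{SOT-lim}_{V}T(z_{V}(s^{-1}\cdot))$, the net over $i$ becoming constant. Bijectivity and the mutual-inverse property are then immediate from the theorem, as is the (here vacuous) fact that $\pi^{T}=\textup{Id}_{X}$, consistent with the reduction above. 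The content of Remark \ref{rem:induced-related-to-left-regular} and Corollary \ref{cor:crossedprod_is_Beulringalgebra} then supplies the identification with $L^{1}(G,\omega)$ and, when $\inf_{W\in\mathcal{Z}}\sup_{r\in W}\omega(r)=1$, its isometric nature.

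Finally, for the two norm statements under $\inf_{W\in\mathcal{Z}}\sup_{r\in W}\omega(r)=1$: estimate (1) of Theorem \ref{thm:continuous-non-deg-covars-are-R-continuous}, with $\|\pi\|=1$ and the optimal choice $C_{U}=\sup_{r\in G}\|U_{r}\|/\omega(r)$ (finite and well defined, as $\omega>0$ everywhere), yields $\|T^{U}\|\leq\sup_{r\in G}\|U_{r}\|/\omega(r)$; and estimate (3), with $M=1$ and the infimum equal to $1$, yields directly $\|U_{r}^{T}\|\leq\|T\|\,\omega(r)$, which is the second claim. The single genuinely new step is promoting the first inequality to the claimed equality, which I would carry out by feeding $T^{U}$ back through estimate (3): since $U=U^{T^{U}}$ by the inverse property, (3) gives $\|U_{r}\|\leq\|T^{U}\|\,\omega(r)$ for all $r$, whence $\sup_{r\in G}\|U_{r}\|/\omega(r)\leq\|T^{U}\|$ and equality follows. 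The main obstacle here is conceptual rather than computational, namely correctly performing the reduction that eliminates $\pi$; once this is in place, everything reduces to substitution into the already-proved theorem together with this one self-referential application of estimate (3).
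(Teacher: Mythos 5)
Your proposal is correct and follows essentially the same route as the paper: everything except the norm equality is obtained by specializing Theorem \ref{thm:continuous-non-deg-covars-are-R-continuous} to $(\mathbb{K},G,\textup{triv})$ (where non-degeneracy forces $\pi(1)=\textup{Id}_X$ and $\|\pi\|=1$), the upper bound $\|T^{U}\|\leq\sup_{r}\|U_{r}\|/\omega(r)$ comes from estimate (1) with the optimal choice of $C_{U}$, and the reverse inequality comes from feeding $T^{U}$ back through estimate (3) using that the two maps are mutual inverses. This is exactly the argument given in the paper, with your write-up merely making the reduction of $\pi$ to the identity more explicit.
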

\begin{proof}
The only statement that does not follow directly from Theorem \ref{thm:continuous-non-deg-covars-are-R-continuous}
is that $\|T^{U}\|=\sup_{r\in G}\|U_{r}\|/\omega(r)$, when $\sup_{W\in\mathcal{Z}}(\sup_{r\in W}\omega(r))^{-1}=1$.

To establish this, we note that 
\[
\|U_{r}\|=\omega(r)\frac{\|U_{r}\|}{\omega(r)}\leq\left(\sup_{s\in G}\frac{\|U_{s}\|}{\omega(s)}\right)\omega(r).
\]
 Therefore, we can replace $C_{U}$ with $\sup_{r\in G}\|U_{r}\|/\omega(r)$,
and, by the bound (1) in Theorem \ref{thm:continuous-non-deg-covars-are-R-continuous},
$\|T^{U}\|\leq\sup_{r\in G}\|U_{r}\|/\omega(r)$. The reverse inequality
follows from (3) in Theorem \ref{thm:continuous-non-deg-covars-are-R-continuous},
when noting that the maps $U\mapsto T^{U}$ and $T\mapsto U^{T}$
are mutual inverses.\end{proof}
\begin{rem}
For one-dimensional representations, Corollary \ref{cor:classical-L1-result}
implies that continuous characters $\chi:G\to\mathbb{C}^{\times}$
of $G$, such that $|\chi(r)|\leq C_{\chi}\omega(r)$ for some $C_{\chi}$
and all $r\in G$, are in natural bijection with the one-dimensional
representations of $L^{1}(G,\omega)$. Since this is a Banach algebra,
such representations are contractive, and the final part of Corollary
\ref{cor:classical-L1-result} then asserts that one can actually
take $C_{\chi}=1$ (cf.\,\cite[Lemma 2.8.2]{Kaniuth} for abelian
$G$). One can also verify this directly by noting that, if there
exists some $s\in G$ for which $|\chi(s)|>\omega(s)$, then, for
all $n\in\mathbb{N}$, by submultiplicativity of $\omega$, 
\[
\left(\frac{|\chi(s)|}{\omega(s)}\right)^{n}=\frac{|\chi(s^{n})|}{\omega(s)^{n}}\leq C_{\chi}\frac{\omega(s^{n})}{\omega(s)^{n}}\leq C_{\chi}.
\]
Therefore, since $|\chi(s)|>\omega(s)$, we must have that $C_{\chi}=\infty$,
which is absurd. Hence $|\chi(r)|\leq\omega(r)$ for all $r\in G$.
\end{rem}

\section{Other types for $(\pi,U)$\label{sec:Other-types}}

For a given Banach algebra dynamical system $(A,G,\alpha)$ we have
thus far been concerned with a uniformly bounded class of pairs $(\pi,U)$,
where $\pi:A\to B(X)$ and $U:G\to B(X)$ are multiplicative representations,
$U$ is strongly continuous, and satisfy the covariance condition
\[
U_{r}\pi(a)U_{r}^{-1}=\pi(\alpha_{r}(a))
\]
 for all $r\in G$ and $a\in A$. On the other hand, in \cite[Proposition 6.5]{2011arXiv1104.5151D},
we have encountered an example of a pair $(\pi,U)$ where $\pi$ and
$U$ are both anti-multiplicative and satisfy the anti-covariance
condition
\[
U_{r}\pi(a)U_{r}^{-1}=\pi(\alpha_{r^{-1}}(a))
\]
 for all $r\in G$ and $a\in A$. Suppose one has a uniformly bounded
class $\mathcal{R}$ of such pairs $(\pi,U)$, with $U$ strongly
continuous, $\pi$ non-degenerate and that $A$ has a bounded ``appropriately
sided'' approximate identity, can one then find a Banach algebra of
crossed product type again, such that its non-degenerate bounded (perhaps
anti-) representations are in natural bijection with the $\mathcal{R}$-continuous
pairs $(\rho,V)$, satisfying the aforementioned requirements for
elements of $\mathcal{R}$? What about pairs $(\pi,U)$ where $\pi$
is multiplicative, $U$ is anti-multiplicative and a covariance condition
is satisfied? Can one, to ask a more fundamental question, expect
a meaningful theory to exist for such pairs?

In this section we address these matters. We start by determining
what appears to be the natural ``reasonable'' requirements in this
vein on $(\pi,U)$ for a meaningful theory to exist (and which are
not met in the second-mentioned example). There turn out to be four
cases. For each case we indicate a Banach algebra dynamical system
$(B,H,\beta)$ such that $B=A$ and $H=G$ as sets, and such that
the given maps $\pi:B\to B(X)$ and $U:H\to B(X)$ are now multiplicative
and satisfy a covariance condition. This brings us back into the realm
of the correspondence as in Theorem \ref{thm:General-Correspondence-Theorem}
or \cite[Theorem 8.1]{2011arXiv1104.5151D}, but we leave it to the
reader to formulate the resulting correspondence theorem for the other
three types of uniformly bounded classes of non-degenerate continuous
pairs $(\pi,U)$. 

After this, we turn to actions of $A$ and $G$ on $C_{c}(G,A)$.
While this is not, in general, a Banach space, several Banach spaces
are naturally obtained from $C_{c}(G,A)$ via quotients and/or completions,
hence it is for this space that we list sixteen canonical pairs of
actions, with each of the four ``reasonable'' properties occurring
four times. We then explain that, even though the formulas look quite
different, there is essentially only one pair, and the fifteen others
can be derived from it. We conclude with natural pairs $(\pi,U)$
of commuting actions on $C_{c}(G,A)$. 

This section is, in a sense, elementary and almost entirely algebraic
in nature. Nevertheless, we thought it worthwhile to make a systematic
inventorization, once and for all, of the ``reasonable'' properties
of pairs $(\pi,U)$, the natural actions on $A$-valued function spaces
on $G$, and the interrelations between the various formulas. A particular
case of the results in the present section will be instrumental in
Section \ref{sec:Beurling-Right-and-bimodules} where we explain how
non-degenerate right-- and bimodules over generalized Beurling algebras
fit into the general framework of crossed products of Banach algebras.

To start with, let $(A,G,\alpha)$ be a Banach algebra dynamical system.
What are the ``reasonable'' properties of $(\pi,U)$ that can lead
to a meaningful theory? Let us assume that $\pi:A\to B(X)$ is linear
and multiplicative or anti-multiplicative, that $U:G\to B(X)$ is
a multiplicative or anti-multiplicative map of $G$ into the group
of invertible elements of $B(X)$, and that 
\begin{equation}
U_{r}\pi(a)U_{r}^{-1}=\pi(\delta_{r}(a))\label{eq:anti?covariance}
\end{equation}
 for all $a\in A$ and $r\in G$, where $\delta$ is a multiplicative
or anti-multiplicative map from $G$ into the automorphisms or anti-automorphisms
of $A$. This is ``asking for the most general setup''. We start by
arguing that $\delta$ should map $G$ into the automorphisms of $A$.
Indeed, if $\pi$ is multiplicative, $r\in G$ and $a_{1},a_{2}\in A$,
then 
\begin{eqnarray*}
\pi(\delta_{r}(a_{1}a_{2})) & = & U_{r}\pi(a_{1}a_{2})U_{r}^{-1}\\
 & = & U_{r}\pi(a_{1})U_{r}^{-1}U_{r}\pi(a_{2})U_{r}^{-1}\\
 & = & \pi(\delta_{r}(a_{1}))\pi(\delta_{r}(a_{2}))\\
 & = & \pi(\delta_{r}(a_{1})\delta_{r}(a_{2})).
\end{eqnarray*}
If $\pi$ is anti-multiplicative, then again
\begin{eqnarray*}
\pi(\delta_{r}(a_{1}a_{2})) & = & U_{r}\pi(a_{1}a_{2})U_{r}^{-1}\\
 & = & U_{r}\pi(a_{2})U_{r}^{-1}U_{r}\pi(a_{1})U_{r}^{-1}\\
 & = & \pi(\delta_{r}(a_{2}))\pi(\delta_{r}(a_{1}))\\
 & = & \pi(\delta_{r}(a_{1})\delta_{r}(a_{2})).
\end{eqnarray*}
Hence one is led to assume that $\delta$ maps $G$ into $\mbox{Aut}(A)$,
still leaving open the possible choice of $\delta:G\to\mbox{Aut}(A)$
being multiplicative or anti-multiplicative.

To continue, if $U$ is anti-multiplicative, then (\ref{eq:anti?covariance})
implies, for $a\in A$ and $r_{1},r_{2}\in G$,
\begin{eqnarray*}
\pi(\delta_{r_{1}r_{2}}(a)) & = & U_{r_{1}r_{2}}\pi(a)U_{r_{1}r_{2}}^{-1}\\
 & = & U_{r_{2}}U_{r_{1}}\pi(a)U_{r_{1}}^{-1}U_{r_{2}}^{-1}\\
 & = & \pi(\delta_{r_{2}}\circ\delta_{r_{1}}(a)).
\end{eqnarray*}
Therefore, unless one imposes a further relation between $\pi$ and
$U$, it seems that only the possibility that $\delta$ is also anti-multiplicative
will lead to a meaningful theory. Likewise, the multiplicativity of
$U$ ``implies'' that $\delta$ should be multiplicative. Using that
$\delta_{r}$ is multiplicative on $A$ for $r\in G$, it is easily
seen that the covariance condition yields no implications on the nature
of $\pi$. 

With $(A,G,\alpha)$ given, the relevant non-trivial choice for a
multiplicative $\delta$ is $\alpha,$ and for an anti-multiplicative
$\delta$ it is $\alpha^{o}$ where $\alpha_{r}^{o}:=\alpha_{r^{-1}}$
for all $r\in G$; the reason for this notation will become clear
in a moment. We will consider these non-trivial choices for $\delta$
first, and return to $\delta=\mbox{triv}$ later. 

Hence we have to consider four meaningful possibilities for a pair
$(\pi,U)$ and the relation between $\pi$ and $U$. If we let, e.g.,
$(a,m)$ denote the case where $\pi$ is anti-multiplicative and $U$
is multiplicative, then, for $(m,m)$ and $(a,m)$, one should require
\[
U_{r}\pi(a)U_{r}^{-1}=\pi(\alpha_{r}(a)),
\]
 and for $(m,a)$ and $(a,a)$, one should require 
\[
U_{r}\pi(a)U_{r}^{-1}=\pi(\alpha_{r^{-1}}(a))=\pi(\alpha_{r}^{o}(a))
\]
for all $a\in A$ and $r\in G$.

Now note that, with $G^{o}$ denoting the opposite group, $\alpha^{o}:G^{o}\to\mbox{Aut}(A)$
is a multiplicative strongly continuous map if $\alpha$ is. Therefore,
if $(A,G,\alpha)$ is a Banach algebra dynamical system, then so is
$(A,G^{o},\alpha^{o})$. Furthermore, if $A^{o}$ is the opposite
algebra, then $\mbox{Aut}(A)=\mbox{Aut}(A^{o})$. Therefore, if $(A,G,\alpha)$
is a Banach algebra dynamical system, so is $(A^{o},G,\alpha)$. Combining
these two, a Banach algebra dynamical system has a third natural companion
Banach algebra dynamical system, namely $(A^{o},G^{o},\alpha^{o})$.
In each of these three cases, the Banach algebra is $A$ as a set,
and the group is $G$ as a set. Hence the given maps $\pi:A\to B(X)$
and $U:G\to B(X)$ can be viewed unaltered as maps for the new system,
denoted by $\tilde{\pi}$ and $\tilde{U}$. The crux is, then, that
anti-multiplicative representations of $A$ correspond to multiplicative
representations of $A^{o}$, and likewise for $G$ and $G^{o}$. Hence,
regardless of the type of $(\pi,U)$, one can always pass to a suitable
companion Banach algebra dynamical system to ensure that the same
pair of maps is a pair of type $(m,m)$ for the companion Banach algebra
dynamical system. For example, if $(\pi,U)$ is of type $(a,a)$ for
$(A,G,\alpha)$ and satisfies $U_{r}\pi(a)U_{r}^{-1}=\pi(\alpha_{r^{-1}}(a))$
for $a\in A$ and $r\in G$, then $\tilde{\pi}:A^{o}\to B(X)$ and
$\tilde{U}:G^{o}\to B(X)$ form a pair of type $(m,m)$ for $(A^{o},G^{o},\alpha^{o})$,
satisfying $\tilde{U}_{r}\tilde{\pi}(a)\tilde{U}_{r}^{-1}=\pi(\alpha_{r}^{o}(a))$
for $a\in A^{o}$ and $r\in G^{o}$. Hence, $(\tilde{\pi},\tilde{U})$
is a covariant pair of type $(m,m)$ for $(A^{o},G^{o},\alpha^{o})$,
and we are back at our original type of objects. One can argue similarly
for the types $(a,m)$ and $(m,a)$, and this leads to the following
table:

\begin{table}[h]
\begin{centering}
\begin{tabular}{|c|c|c|c|}
\hline 
Type of $(\pi,U)$  & Should require  & $(\tilde{\pi},\tilde{U})$ is type  & \tabularnewline
for $(A,G,\alpha)$ & that $U_{r}\pi(a)U_{r}^{-1}=$ & $(m,m)$ for & $\tilde{U}_{r}\tilde{\pi}(a)\tilde{U_{r}}^{-1}=$\tabularnewline
\hline 
\hline 
$(m,m)$ & $\pi(\alpha_{r}(a))$ & $(A,G,\alpha)$ & $\tilde{\pi}(\alpha_{r}(a))$\tabularnewline
\hline 
$(m,a)$ & $\pi(\alpha_{r^{-1}}(a))$ & $(A,G^{o},\alpha^{o})$ & $\tilde{\pi}(\alpha_{r}^{o}(a))$\tabularnewline
\hline 
$(a,m)$ & $\pi(\alpha_{r}(a))$ & $(A^{o},G,\alpha)$ & $\tilde{\pi}(\alpha_{r}(a))$\tabularnewline
\hline 
$(a,a)$ & $\pi(\alpha_{r^{-1}}(a))$ & $(A^{o},G^{o},\alpha^{o})$ & $\tilde{\pi}(\alpha_{r}^{o}(a))$\tabularnewline
\hline 
\end{tabular}
\par\end{centering}

\caption{}
\label{tab:table1}
\end{table}

We can now point out how classes of pairs $(\pi,U)$ of other types
than $(m,m)$ can be related to representations of a crossed product
of a Banach algebra. For example, suppose that $\mathcal{R}$ is a
uniformly bounded class (as in Section \ref{sec:Preliminaries-and-recapitulation})
of non-degenerate continuous pairs $(\pi,U)$ where $\pi:A\to B(X)$
and $U:G\to B(X)$ are both anti-multiplicative satisfying $U_{r}\pi(a)U_{r}^{-1}=\pi(\alpha_{r^{-1}}(a))$.
We pass to the system $(A^{o},G^{o},\alpha^{o})$ and consider the
class $\tilde{\mathcal{R}}$ consisting of all pairs $(\tilde{\pi},\tilde{U})=(\pi,U)$,
for $(\pi,U)\in\mathcal{R}$. Then $\tilde{\mathcal{R}}$ is a uniformly
bounded class of non-degenerate continuous covariant representations
of $(A^{o},G^{o},\alpha^{o})$, and the general correspondence theorem,
Theorem \ref{thm:General-Correspondence-Theorem} or \cite[Theorem 8.1]{2011arXiv1104.5151D}
furnishes a bijection between the non-degenerate bounded (multiplicative)
representations of $(A^{o}\rtimes_{\alpha^{o}}G^{o})^{\tilde{\mathcal{R}}}$
and the non-degenerate $\tilde{\mathcal{R}}$-continuous covariant
representations of $(A^{o},G^{o},\alpha^{o})$. It is then a matter
of routine, left to the reader, to reformulate the latter class as
pairs $(\pi,U)$ of type $(a,a)$ for $(A,G,\alpha)$ again, being
aware that the Haar measure for $G$ differs from that of $G^{o}$
by the modular function. The remaining types $(m,a)$ and $(a,m)$
can be treated similarly and bring the non-degenerate bounded (always
multiplicative) representations of $(A\rtimes_{\alpha^{o}}G^{o})^{\tilde{\mathcal{R}}}$
and $(A^{o}\rtimes_{\alpha}G)^{\tilde{\mathcal{R}}}$, respectively,
into play.

We now turn to what can perhaps be regarded as the sixteen canonical
types of actions of $A$ and $G$ on the linear space $C_{c}(G,A)$
(and hence on many natural Banach spaces). They are listed in Table
\ref{tab:table2} and were originally obtained by judiciously experimenting
with various candidate expressions. In this table $a\in A$, $r,s\in G$,
$f\in C_{c}(G,A)$ and $\chi:G\to\mathbb{C}^{\times}$ is a continuous
character. The possibility of inserting $\chi$ enables one to arrange,
by choosing the modular function, that the group actions as in the
lines 3, 8, 11 and 16 are isometric on $L^{p}$-type spaces for $1\leq p<\infty$.

\begin{table}[h]
\begin{centering}
\begin{tabular}{|c|c|c|c|c|}
\hline 
No. & $(\pi(a)f)(s)$ & $(U_{r}f)(s)$ & Type $(\pi,U)$ & $U_{r}\pi(a)U_{r}^{-1}$\tabularnewline
\hline 
\hline 
1 & $af(s)$ & $\chi_{r}\alpha_{r}(f(r^{-1}s))$ & $(m,m)$ & $\pi(\alpha_{r}(a))$\tabularnewline
\hline 
2 & $af(s)$ & $\chi_{r}\alpha_{r}(f(sr))$ & $(m,m)$ & $\pi(\alpha_{r}(a))$\tabularnewline
\hline 
3 & $\alpha_{s}(a)f(s)$ & $\chi_{r}f(sr)$ & $(m,m)$ & $\pi(\alpha_{r}(a))$\tabularnewline
\hline 
4 & $\alpha_{s^{-1}}(a)f(s)$ & $\chi_{r}f(r^{-1}s)$ & $(m,m)$ & $\pi(\alpha_{r}(a))$\tabularnewline
\hline 
5 & $af(s)$ & $\chi_{r}\alpha_{r^{-1}}(f(sr^{-1}))$ & $(m,a)$ & $\pi(\alpha_{r^{-1}}(a))$\tabularnewline
\hline 
6 & $af(s)$ & $\chi_{r}\alpha_{r^{-1}}(f(rs))$ & $(m,a)$ & $\pi(\alpha_{r^{-1}}(a))$\tabularnewline
\hline 
7 & $\alpha_{s^{-1}}(a)f(s)$ & $\chi_{r}f(rs)$ & $(m,a)$ & $\pi(\alpha_{r^{-1}}(a))$\tabularnewline
\hline 
8 & $\alpha_{s}(a)f(s)$ & $\chi_{r}f(sr^{-1})$ & $(m,a)$ & $\pi(\alpha_{r^{-1}}(a))$\tabularnewline
\hline 
9 & $f(s)a$ & $\chi_{r}\alpha_{r}(f(r^{-1}s))$ & $(a,m)$ & $\pi(\alpha_{r}(a))$\tabularnewline
\hline 
10 & $f(s)a$ & $\chi_{r}\alpha_{r}(f(sr))$ & $(a,m)$ & $\pi(\alpha_{r}(a))$\tabularnewline
\hline 
11 & $f(s)\alpha_{s}(a)$ & $\chi_{r}f(sr)$ & $(a,m)$ & $\pi(\alpha_{r}(a))$\tabularnewline
\hline 
12 & $f(s)\alpha_{s^{-1}}(a)$ & $\chi_{r}f(r^{-1}s)$ & $(a,m)$ & $\pi(\alpha_{r}(a))$\tabularnewline
\hline 
13 & $f(s)a$ & $\chi_{r}\alpha_{r^{-1}}(f(sr^{-1}))$ & $(a,a)$ & $\pi(\alpha_{r^{-1}}(a))$\tabularnewline
\hline 
14 & $f(s)a$ & $\chi_{r}\alpha_{r^{-1}}(f(rs))$ & $(a,a)$ & $\pi(\alpha_{r^{-1}}(a))$\tabularnewline
\hline 
15 & $f(s)\alpha_{s^{-1}}(a)$ & $\chi_{r}f(rs)$ & $(a,a)$ & $\pi(\alpha_{r^{-1}}(a))$\tabularnewline
\hline 
16 & $f(s)\alpha_{s}(a)$ & $\chi_{r}f(sr^{-1})$ & $(a,a)$ & $\pi(\alpha_{r^{-1}}(a))$\tabularnewline
\hline 
\end{tabular}
\par\end{centering}

\caption{}
\label{tab:table2}
\end{table}

We will now explain why, essentially, there is only one canonical
type of action from which all others can be derived. To start with,
note that the spaces $C_{c}(G,A)$, $C_{c}(G^{o},A)$, $C_{c}(G,A^{o})$
and $C_{c}(G^{o},A^{o})$ can all be identified. This can be put to
good use as follows: Suppose one has verified that the formulas in
line 1 yield a pair $(\pi,U)$ of type $(m,m)$ for any Banach algebra
dynamical system. Then one can apply this to $(A,G^{o},\alpha^{o})$
and view the resulting actions of $A$ and $G^{o}$ on $C_{c}(G^{o},A)$,
which are of type $(m,m)$, as actions of $A$ and $G$ on $C_{c}(G,A)$.
It is immediate that the resulting pair $(\pi,U)$ will be of type
$(m,a)$ for $(A,G,\alpha)$. In fact, it is line 5 in the table.
Likewise, line 1 for $(A^{o},G,\alpha)$ and for $(A^{o},G^{o},\alpha^{o})$
yields line 9 and 13 for $(A,G,\alpha)$, respectively. Similarly
line 2 yields the lines 6, 10 and 14, line 3 yields the lines 7, 11
and 15, and line 4 yields the lines 8, 12 and 16. Thus the actions
in lines 1 through 4 generate all others via passing to companion
Banach algebra dynamical systems. These four actions of $(A,G,\alpha)$
of type $(m,m)$ are, in turn, also essentially the same: They are,
in fact, equivalent under linear automorphisms of $C_{c}(G,A)$. In
order to see this, define, for a continuous character $\chi:G\to\mathbb{C}^{\times}$,
the linear order 2 automorphism $T_{\chi}:C_{c}(G,A)\to C_{c}(G,A)$
by 
\[
(T_{\chi}f)(s):=\chi_{s}f(s^{-1})
\]
for all $s\in G$ and $f\in C_{c}(G,A)$. Adding line numbers in brackets
in the obvious way, one then verifies that
\[
\pi_{(2)}(a)=T_{\chi(1)\chi(2)^{-1}}\pi_{(1)}(a)T_{\chi(1)\chi(2)^{-1}}^{-1}
\]
 for all $a\in A$, and
\[
U_{(2),r}=T_{\chi(1)\chi(2)^{-1}}U_{(1),r}T_{\chi(1)\chi(2)^{-1}}^{-1}
\]
 for all $r\in G$. Thus the actions in the lines 1 and 2 are equivalent.
Likewise,

\[
\pi_{(4)}(a)=T_{\chi(4)\chi(3)^{-1}}\pi_{(3)}(a)T_{\chi(4)\chi(3)^{-1}}^{-1}
\]
 for all $a\in A$, and
\[
U_{(4),r}=T_{\chi(4)\chi(3)^{-1}}U_{(3),r}T_{\chi(4)\chi(3)^{-1}}^{-1}
\]
for all $r\in G$. Hence the actions in the lines 3 and 4 are equivalent.
Furthermore, with $\chi:G\to\mathbb{C}^{\times}$ a continuous character
as before, we let $S_{\chi}:C_{c}(G,A)\to C_{c}(G,A)$ be defined
by 
\[
(S_{\chi}f)(s):=\chi_{s^{-1}}\alpha_{s^{-1}}(f(s))
\]
for all $s\in G$ and $f\in C_{c}(G,A)$. Then $S_{\chi}$ is a linear
automorphism of $C_{c}(G,A)$ and its inverse is given by 
\[
(S_{\chi}^{-1}f)(s)=\chi_{s}\alpha_{s}(f(s)).
\]
It is then straightforward to check that
\[
\pi_{(4)}(a)=S_{\chi(1)\chi(4)^{-1}}\pi_{(1)}(a)S_{\chi(1)\chi(4)^{-1}}^{-1}
\]
for all $a\in A$, and
\[
U_{(4),r}=S_{\chi(1)\chi(4)^{-1}}U_{(1),r}S_{\chi(1)\chi(4)^{-1}}^{-1}
\]
for all $r\in G$. Thus the actions in the lines 1 and 4 are equivalent,
and hence all actions of type $(m,m)$ in the lines 1 through 4 are
equivalent. Therefore, in spite of the different appearances, there
is essentially only one type of canonical action in Table \ref{tab:table2}.

We conclude this section with a discussion of the remaining case $\delta=\mbox{triv}$
in (\ref{eq:anti?covariance}), i.e., commuting actions of $A$ and
$G$. It is interesting to note that, given a Banach algebra dynamical
system $(A,G,\alpha)$, we have eight canonical commuting actions
of $A$ and $G$ on $C_{c}(G,A)$. They are listed in Table \ref{tab:table3},
with the same notational conventions as in Table \ref{tab:table2}.

\begin{table}[h]
\begin{centering}
\begin{tabular}{|c|c|c|c|c|}
\hline 
No. & $(\pi(a)f)(s)$ & $(U_{r}f)(s)$ & Type $(\pi,U)$ & $U_{r}\pi(a)U_{r}^{-1}$\tabularnewline
\hline 
\hline 
1 & $\alpha_{s}(a)f(s)$ & $\chi_{r}\alpha_{r}(f(r^{-1}s))$ & $(m,m)$ & $\pi(a)$\tabularnewline
\hline 
2 & $\alpha_{s^{-1}}(a)f(s)$ & $\chi_{r}\alpha_{r}(f(sr))$ & $(m,m)$ & $\pi(a)$\tabularnewline
\hline 
3 & $\alpha_{s^{-1}}(a)f(s)$ & $\chi_{r}\alpha_{r^{-1}}(f(sr^{-1}))$ & $(m,a)$ & $\pi(a)$\tabularnewline
\hline 
4 & $\alpha_{s}(a)f(s)$ & $\chi_{r}\alpha_{r^{-1}}(f(rs))$ & $(m,a)$ & $\pi(a)$\tabularnewline
\hline 
5 & $f(s)\alpha_{s}(a)$ & $\chi_{r}\alpha_{r}(f(r^{-1}s))$ & $(a,m)$ & $\pi(a)$\tabularnewline
\hline 
6 & $f(s)\alpha_{s^{-1}}(a)$ & $\chi_{r}\alpha_{r}(f(sr))$ & $(a,m)$ & $\pi(a)$\tabularnewline
\hline 
7 & $f(s)\alpha_{s^{-1}}(a)$ & $\chi_{r}\alpha_{r^{-1}}(f(sr^{-1}))$ & $(a,a)$ & $\pi(a)$\tabularnewline
\hline 
8 & $f(s)\alpha_{s}(a)$ & $\chi_{r}\alpha_{r^{-1}}(f(rs))$ & $(a,a)$ & $\pi(a)$\tabularnewline
\hline 
\end{tabular}
\par\end{centering}

\caption{}
\label{tab:table3}
\end{table}

We employ a similar mechanism as before. Indeed, suppose we have verified
that, for any Banach algebra dynamical system, the formulas in line
1 yield commuting actions of type $(m,m)$. Applying this to $(A,G^{o},\alpha^{o})$
one obtains a commuting pair of type $(m,a)$: line 3 in Table \ref{tab:table3}.
Likewise, line 1 for $(A^{o},G,\alpha)$ and for $(A^{o},G^{o},\alpha^{o})$
yield line 5 and line 7, respectively. Similarly line 2 yields the
lines 4, 6 and 8. Furthermore, with $\mathbf{1}:G\to\mathbb{C}^{\times}$
denoting the trivial character, one checks that 
\[
\pi_{(2)}(a)=T_{\mathbf{1}}\pi_{\text{(1)}}(a)T_{\mathbf{1}}^{-1}
\]
 for all $a\in A$, and
\[
U_{(2),r}=T_{\mathbf{1}}U_{(1),r}T_{\mathbf{1}}^{-1}
\]
 for all $r\in G$. Thus the actions in lines 1 and 2 are equivalent,
and again there is essentially only one pair of actions in Table \ref{tab:table3}.
In this case, one can even go a bit further: Define
\begin{eqnarray*}
(\tilde{\pi}(a)f)(s) & := & af(s)\\
(\tilde{U_{r}}f)(s) & := & f(r^{-1}s)
\end{eqnarray*}
for all $a\in A$, $r\in G$ and $f\in C_{c}(G,A)$. Then $(\tilde{\pi},\tilde{U})$
is ``the'' canonical covariant pair of type $(m,m)$ for $(A,G,\mbox{triv})$,
and one verifies that
\[
\pi_{(1)}(a)=S_{\chi(1)}^{-1}\tilde{\pi}(a)S_{\chi(1)}
\]
for all $a\in A$, and
\[
U_{(1),r}=S_{\chi(1)}^{-1}\tilde{U}_{r}S_{\chi(1)}
\]
for all $r\in G$. Hence all the commuting actions for $A$ and $G$
in Table \ref{tab:table3} essentially originate from the canonical
covariant pair $(\tilde{\pi},\tilde{U})$ for $(A,G,\mbox{triv})$.

\section{\label{sec:Several-BADS}Several Banach algebra dynamical systems
and classes }

\global\long\def\BADScrossedprodi{(A_{i}\rtimes_{\alpha_{i}}G_{i})^{\mathcal{R}_{i}}}

\global\long\def\BADSsystemi{(A_{i},G_{i},\alpha_{i})}

Suppose $\BADSsystemi$, with $i\in\{1,\ldots,n\}$, are finitely
many Banach algebra dynamical systems, and that $\mathcal{R}_{i}$
is a non-empty uniformly bounded class of non-degenerate continuous
covariant representations of $\BADSsystemi$. We will show (cf. Theorem
\ref{thm:sun-product-of-tensor-crossed-products}) that, for a Banach
space $X$, there is a natural bijection between the non-degenerate
bounded representations of the projective tensor product $\widehat{\bigotimes}_{i=1}^{n}\BADScrossedprodi$
on $X$ and the $n$-tuples $((\pi_{1},U_{1}),\ldots,(\pi_{n},U_{n}))$,
where, for each $i\in\{1,\ldots,n\}$, $(\pi_{i},U_{i})$ is a non-degenerate
$\mathcal{R}_{i}$-continuous covariant representation of $\BADSsystemi$
on $X$, and $(\pi_{i},U_{i})$ commutes (to be defined below) with
$(\pi_{j},U_{j})$ for all $i,j\in\{1,\ldots,n\}$ with $i\neq j,$.
Such situations are quite common. For example if $X$ is a $G$-bimodule
(i.e., $X$ is supplied with a left action $U$ of $G$ and a right
action $V$ of $G$ that commute), then this can be interpreted as
commuting non-degenerate continuous covariant representations $(\textup{id},U)$
and $(\textup{id},V)$ of $(\mathbb{K},G,\textup{triv})$ and $(\mathbb{K},G^{o},\textup{triv})$,
respectively (where $G^{o}$ denotes the opposite group of $G$).
In a similar vein, if $(\pi,U)$ is a non-degenerate continuous covariant
representation of $(A,G,\alpha)$ on $X$, and $(\rho,V)$ is a non-degenerate
continuous pair of type $(a,a)$ (in the terminology of Section \ref{sec:Other-types})
and $(\pi,U)$ and $(\rho,V)$ commute, then $(\pi,U)$ and $(\rho,V)$
can be interpreted as a pair of commuting non-degenerate continuous
covariant representations of $(A,G,\alpha)$ and $(A^{o},G^{o},\alpha^{o})$,
respectively (where $A^{o}$ and $G^{o}$ are, respectively, the opposite
Banach algebra and group of $A$ and $G$, with $\alpha_{r}^{o}:=\alpha_{r^{-1}}$
for all $r\in G$ as in Section \ref{sec:Other-types}). Theorem \ref{thm:sun-product-of-tensor-crossed-products}
explains, as a special case, how such a pair of commuting non-degenerate
covariant representations $(\pi,U)$ and $(\rho,V)$ can be related
to a non-degenerate bounded representation of $(A\rtimes_{\alpha}G)^{\mathcal{R}_{1}}\hat{\otimes}(A^{o}\rtimes_{\alpha^{o}}G^{o})^{\mathcal{R}_{2}}$,
where $\mathcal{R}_{1}$ and $\mathcal{R}_{2}$ are uniformly bounded
classes of non-degenerate continuous covariant representations of
$(A,G,\alpha)$ and $(A^{o},G^{o},\alpha^{o})$ respectively, and
$(\pi,U)$ and $(\rho,V)$ are respectively $\mathcal{R}_{1}$-continuous
and $\mathcal{R}_{2}$-continuous.

We will now proceed to establish Theorem \ref{thm:sun-product-of-tensor-crossed-products},
and start with a rather obvious definition.
\begin{defn}
\label{def:commuting-maps}Let $X$ be a Banach space and let $\varphi_{i}:S_{i}\to B(X)$
be maps from sets $S_{i}$ into $B(X)$ for $i\in\{1,2\}$. Then $\varphi_{1}$
and $\varphi_{2}$ are said to \emph{commute }if $\varphi_{1}(s_{1})\varphi_{2}(s_{2})=\varphi_{2}(s_{2})\varphi_{1}(s_{1})$
for all $s_{1}\in S_{1}$ and $s_{2}\in S_{2}$. 

Let $(A_{1},G_{1},\alpha_{1})$ and $(A_{2},G_{2},\alpha_{2})$ be
Banach algebra dynamical systems with $(\pi_{1},U_{1})$ and $(\pi_{2},U_{2})$
pairs of maps $\pi_{1}:A_{1}\to B(X)$, $U_{1}:G_{1}\to B(X)$ and
$\pi_{2}:A_{2}\to B(X)$, $U_{2}:G_{2}\to B(X)$. Then the pairs $(\pi_{1},U_{1})$
and $(\pi_{2},U_{2})$ are said to \emph{commute} if each of $\pi_{1}$
and $U_{1}$ commutes with both $\pi_{2}$ and $U_{2}$.
\end{defn}
We then have the following:
\begin{lem}
\label{commuting-covars-and-integrated-forms}Let $X$ be a Banach
space. For $i\in\{1,2\}$, let $\BADSsystemi$ be a Banach algebra
dynamical system and let $(\pi_{i},U_{i})$ be a non-degenerate continuous
covariant representation of $\BADSsystemi$ on $X$. Then the following
are equivalent:
\begin{enumerate}
\item $(\pi_{1},U_{1})$ and $(\pi_{2},U_{2})$ commute.
\item $\pi_{1}\rtimes U_{1}:C_{c}(G_{1},A_{1})\to B(X)$ and $\pi_{2}\rtimes U_{2}:C_{c}(G_{2},A_{2})\to B(X)$
commute.
\end{enumerate}
If, for $i\in\{1,2\}$, $\mathcal{R}_{i}$ is a non-empty class of
continuous covariant representations of $\BADSsystemi$, such that
$(\pi_{i},U_{i})$ is $\mathcal{R}_{i}$-continuous, then \textup{(1)}
and \textup{(2)} are also equivalent to 
\begin{enumerate}
\item [\three_hack]$(\pi_{1}\rtimes U_{1})^{\mathcal{R}_{1}}:(A_{1}\rtimes_{\alpha_{1}}G_{1})^{\mathcal{R}_{1}}\to B(X)$
and $(\pi_{2}\rtimes U_{2})^{\mathcal{R}_{2}}:(A_{2}\rtimes_{\alpha_{2}}G_{2})^{\mathcal{R}_{2}}\to B(X)$
commute.
\end{enumerate}
\end{lem}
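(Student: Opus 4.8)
The plan is to establish the cycle $(1)\Rightarrow(2)\Rightarrow(1)$ and then, under the extra $\mathcal{R}_i$-continuity hypothesis, the equivalence $(2)\Leftrightarrow(3)$; throughout I abbreviate $T_i^{(f)}:=\pi_i\rtimes U_i(f)$. For $(1)\Rightarrow(2)$ the idea is to pull bounded operators through the defining integrals, using \cite[Ch. 3, Exercise 24]{Rudin} twice and avoiding any Fubini theorem. Fixing $f\in C_{c}(G_{1},A_{1})$ and $g\in C_{c}(G_{2},A_{2})$, one first pulls each bounded operator $\pi_{2}(g(t))U_{2,t}$ through the integral defining $T_{1}^{(f)}$ and invokes the pointwise commutation from (1) to conclude that $T_{1}^{(f)}$ commutes with every $\pi_{2}(g(t))U_{2,t}$; pulling $T_{1}^{(f)}$ through the integral $T_{2}^{(g)}x=\int_{G_{2}}\pi_{2}(g(t))U_{2,t}x\,dt$ then yields $T_{1}^{(f)}T_{2}^{(g)}=T_{2}^{(g)}T_{1}^{(f)}$, which is (2).

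The converse $(2)\Rightarrow(1)$ is the crux. The tools are the intertwining identities $T_{i}^{(i_{A}(a)f)}=\pi_{i}(a)T_{i}^{(f)}$ and $T_{i}^{(i_{G}(r)f)}=U_{i,r}T_{i}^{(f)}$ (cf.\ \cite[Proposition 5.3]{2011arXiv1104.5151D} and its evident analogue for $i_{G}$), together with the non-degeneracy of each integrated form. The latter I would record first via a bump-function argument: choosing $z_{V}\in C_{c}(G_{i})$ with $z_{V}\geq0$, $\int z_{V}=1$ and support shrinking to $e$, one has $T_{i}^{(z_{V}\otimes a)}x=\int z_{V}(s)\pi_{i}(a)U_{i,s}x\,ds\to\pi_{i}(a)x$ by strong continuity of $U_{i}$, so the closed linear span of $\{T_{i}^{(f)}x:f,x\}$ contains $\pi_i(A_i)X$ and hence equals $X$. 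Each of the four commutation relations in (1) is then proved in two stages. Stage A shows that $\pi_{1}(a)$ commutes with every $T_{2}^{(g)}$: for arbitrary $f,x$, applying $(2)$ twice and the intertwining identity gives
\[
T_{2}^{(g)}\pi_{1}(a)T_{1}^{(f)}x=T_{2}^{(g)}T_{1}^{(i_{A}(a)f)}x=T_{1}^{(i_{A}(a)f)}T_{2}^{(g)}x=\pi_{1}(a)T_{1}^{(f)}T_{2}^{(g)}x=\pi_{1}(a)T_{2}^{(g)}T_{1}^{(f)}x,
\]
and since the vectors $T_{1}^{(f)}x$ are dense and both sides are bounded in them, $\pi_{1}(a)T_{2}^{(g)}=T_{2}^{(g)}\pi_{1}(a)$. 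Stage B feeds this back through the intertwining identity for the second system:
\[
\pi_{2}(b)\pi_{1}(a)T_{2}^{(g)}x=\pi_{2}(b)T_{2}^{(g)}\pi_{1}(a)x=T_{2}^{(i_{A}(b)g)}\pi_{1}(a)x=\pi_{1}(a)T_{2}^{(i_{A}(b)g)}x=\pi_{1}(a)\pi_{2}(b)T_{2}^{(g)}x,
\]
and density of the $T_{2}^{(g)}x$ yields $\pi_{1}(a)\pi_{2}(b)=\pi_{2}(b)\pi_{1}(a)$. The three remaining relations ($\pi_{1}$ with $U_{2}$, $U_{1}$ with $\pi_{2}$, $U_{1}$ with $U_{2}$) follow verbatim, replacing $i_{A}$ by $i_{G}$ in whichever system is relevant.

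Finally, for $(2)\Leftrightarrow(3)$, I would use that $T_{i}^{(f)}=(\pi_{i}\rtimes U_{i})^{\mathcal{R}_{i}}(q^{\mathcal{R}_{i}}(f))$ and that $q^{\mathcal{R}_{i}}(C_{c}(G_{i},A_{i}))$ is dense in $(A_{i}\rtimes_{\alpha_{i}}G_{i})^{\mathcal{R}_{i}}$. The commutator $[(\pi_{1}\rtimes U_{1})^{\mathcal{R}_{1}}(c_{1}),(\pi_{2}\rtimes U_{2})^{\mathcal{R}_{2}}(c_{2})]$ is bounded and bilinear in $(c_{1},c_{2})$, so it vanishes identically precisely when it vanishes on the dense set $q^{\mathcal{R}_{1}}(C_{c}(G_{1},A_{1}))\times q^{\mathcal{R}_{2}}(C_{c}(G_{2},A_{2}))$, where it reduces exactly to the commutator appearing in (2). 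I expect the direction $(2)\Rightarrow(1)$ to be the only real obstacle: its displayed chains are short, but they are legitimate only because the integrated forms are non-degenerate, which is why I would secure that density statement explicitly before running Stages A and B.
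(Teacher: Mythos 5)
Your proof is correct and follows essentially the same route as the paper: the paper disposes of $(1)\Leftrightarrow(2)$ by repeated appeal to \cite[Proposition 5.5.iii]{2011arXiv1104.5151D} and of $(2)\Leftrightarrow(3)$ by the same density argument you give. Your Stages A and B, together with the bump-function verification that the integrated forms are non-degenerate, are in effect an inline re-derivation of that cited proposition, so nothing is missing.
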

\begin{proof}
That (1) implies (2) can be seen through repeated application of \cite[Proposition 5.5.iii]{2011arXiv1104.5151D}.
We note that non-degeneracy is not required in this step.

That (2) implies (1) follows again by repeated applications of \cite[Propositions 5.5.iii]{2011arXiv1104.5151D},
and relies on the non-degeneracy of $(\pi_{i},U_{i})$ for $i\in\{1,2\}$.

That (2) is equivalent to (3) follows from the density of $q^{\mathcal{R}_{i}}(C_{c}(G_{i},A_{i}))$
in $\BADScrossedprodi$ and the fact that $(\pi_{i}\rtimes U_{i})^{\mathcal{R}_{i}}(q^{\mathcal{R}_{i}}(f))=\pi_{i}\rtimes U_{i}(f)$
for all $f\in C_{c}(G_{i},A_{i})$, for $i\in\{1,2\}$. We again note
that non-degeneracy is not required in this step.\end{proof}
The next step is to investigate the bounded representations of the
projective tensor product $B_{1}\hat{\otimes}B_{2}$ of two Banach
algebras $B_{1}$ and $B_{2}$ (which will later be taken to be crossed
products). We refer to \cite[Section 1.5]{Kaniuth} for the details
concerning the (canonical) algebra structure on the underlying projective
tensor product $B_{1}\hat{\otimes}B_{2}$ of the Banach spaces $B_{1}$
and $B_{2}$, and start with a lemma.
\begin{lem}
\label{lem:sun-product-of-commuting-representations}Let $B_{1}$
and $B_{2}$ be Banach algebras with commuting bounded representations
$\pi_{1}:B_{1}\to B(X)$ and $\pi_{2}:B_{2}\to B(X)$ on the same
Banach space $X$. Then the map $\pi_{1}\odot\pi_{2}:B_{1}\otimes B_{2}\to B(X)$
given by 
\[
\pi_{1}\odot\pi_{2}\left(\sum_{i=1}^{n}b_{1}^{(i)}\otimes b_{2}^{(i)}\right):=\sum_{i=1}^{n}\pi_{1}(b_{1}^{(i)})\pi_{2}(b_{2}^{(i)}),
\]
where $b_{j}^{(i)}\in B_{j}$ for $j\in\{1,2\}$ and $i\in\{1,\ldots,n\}$,
is well defined and extends uniquely to a bounded representation $\pi_{1}\hat{\odot}\pi_{2}:B_{1}\hat{\otimes}B_{2}\to B(X)$. 

Furthermore,
\begin{enumerate}
\item $\|\pi_{1}\hat{\odot}\pi_{2}\|\leq\|\pi_{1}\|\|\pi_{2}\|$,
\item $\pi_{1}\hat{\odot}\pi_{2}:B_{1}\hat{\otimes}B_{2}\to B(X)$ is non-degenerate
if and only if $\pi_{1}:B_{1}\to B(X)$ and $\pi_{2}:B_{2}\to B(X)$
are non-degenerate.
\end{enumerate}
\end{lem}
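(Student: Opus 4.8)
The plan is to construct the map in stages, exploiting the universal properties of the algebraic and projective tensor products, and to isolate the single place where the commutation hypothesis does the essential work.

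First I would establish well-definedness. The map $(b_1,b_2)\mapsto\pi_1(b_1)\pi_2(b_2)$ from $B_1\times B_2$ into $B(X)$ is bilinear and bounded, with bilinear norm at most $\|\pi_1\|\|\pi_2\|$, since $\|\pi_1(b_1)\pi_2(b_2)\|\le\|\pi_1\|\|b_1\|\|\pi_2\|\|b_2\|$. By the universal property of the algebraic tensor product this bilinear map factors uniquely through a linear map on $B_1\otimes B_2$, which is exactly the prescribed $\pi_1\odot\pi_2$; in particular the formula does not depend on the chosen representation $\sum_i b_1^{(i)}\otimes b_2^{(i)}$. Then, by the defining universal property of the projective tensor norm, namely that bounded bilinear maps into a Banach space correspond to bounded linear maps out of $B_1\hat{\otimes}B_2$ with equal norm, the map $\pi_1\odot\pi_2$ extends to a bounded linear map $\pi_1\hat{\odot}\pi_2:B_1\hat{\otimes}B_2\to B(X)$ with $\|\pi_1\hat{\odot}\pi_2\|\le\|\pi_1\|\|\pi_2\|$, which is claim (1). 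Uniqueness of the extension is immediate from the density of $B_1\otimes B_2$ in $B_1\hat{\otimes}B_2$ together with continuity.

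Next I would verify that $\pi_1\hat{\odot}\pi_2$ is multiplicative, which is the heart of the argument and the step where the commuting hypothesis is essential. Recalling that the product on $B_1\hat{\otimes}B_2$ is determined on elementary tensors by $(b_1\otimes b_2)(b_1'\otimes b_2')=(b_1b_1')\otimes(b_2b_2')$ (cf.\,\cite[Section 1.5]{Kaniuth}), I would compute on such tensors that $\pi_1\odot\pi_2\big((b_1\otimes b_2)(b_1'\otimes b_2')\big)=\pi_1(b_1b_1')\pi_2(b_2b_2')=\pi_1(b_1)\pi_1(b_1')\pi_2(b_2)\pi_2(b_2')$, whereas $\pi_1\odot\pi_2(b_1\otimes b_2)\,\pi_1\odot\pi_2(b_1'\otimes b_2')=\pi_1(b_1)\pi_2(b_2)\pi_1(b_1')\pi_2(b_2')$. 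These agree precisely because $\pi_2(b_2)$ commutes with $\pi_1(b_1')$, i.e.\ because $\pi_1$ and $\pi_2$ commute. Since both expressions are bilinear in the two factors, multiplicativity extends from elementary tensors to all of $B_1\otimes B_2$, and then to $B_1\hat{\otimes}B_2$ by continuity of the multiplication and of $\pi_1\hat{\odot}\pi_2$ together with density. Thus $\pi_1\hat{\odot}\pi_2$ is a bounded representation.

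Finally, for the non-degeneracy equivalence (2), I would argue both directions at the level of $X$, noting first that by density and continuity the closed linear span of $\{\pi_1(b_1)\pi_2(b_2)x\}$ equals the closed linear span of $(\pi_1\hat{\odot}\pi_2)(B_1\hat{\otimes}B_2)X$. If both $\pi_i$ are non-degenerate, then $\mathrm{span}\,\pi_2(B_2)X$ is dense in $X$, and given $x\in X$ and $\varepsilon>0$ I would first approximate $x$ by a finite sum $\sum_j\pi_1(a_j)x_j$ using non-degeneracy of $\pi_1$, then replace each $x_j$ by a nearby element of $\mathrm{span}\,\pi_2(B_2)X$, controlling the error by $\|\pi_1(a_j)\|$; the resulting vector lies in $\mathrm{span}\{\pi_1(b_1)\pi_2(b_2)x\}$, giving density and hence non-degeneracy of $\pi_1\hat{\odot}\pi_2$. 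Conversely, since every vector $\pi_1(b_1)\pi_2(b_2)x$ already lies in $\pi_1(B_1)X$, non-degeneracy of $\pi_1\hat{\odot}\pi_2$ forces that of $\pi_1$; and writing $\pi_1(b_1)\pi_2(b_2)x=\pi_2(b_2)\pi_1(b_1)x$ via the commutation shows it also lies in $\pi_2(B_2)X$, forcing non-degeneracy of $\pi_2$. I expect the forward direction here, the two-stage approximation, to be the only part requiring genuine care, the remaining assertions being formal consequences of the tensor-product universal properties and of the commutation identity.
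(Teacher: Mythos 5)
Your proposal is correct and follows essentially the same route as the paper: the well-definedness, norm bound and multiplicativity are the "routine" verifications the paper omits (which you correctly carry out via the universal properties of the algebraic and projective tensor products and the commutation identity on elementary tensors), and your non-degeneracy argument — the two-stage approximation for the forward direction and the containment of $\pi_1(b_1)\pi_2(b_2)x$ in both $\pi_1(B_1)X$ and, via commutation, $\pi_2(B_2)X$ for the converse — is precisely the paper's. No gaps.
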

\begin{proof}
It is routine to verify that $\pi_{1}\odot\pi_{2}$ is well defined
and that $\|\pi_{1}\odot\pi_{2}\|\leq\|\pi_{1}\|\|\pi_{2}\|$. The
fact that $\pi_{1}$ and $\pi_{2}$ commute implies that $\pi_{1}\odot\pi_{2}$
is a representation of $B_{1}\otimes B_{2}$, and then the existence
of $\pi_{1}\hat{\odot}\pi_{2}$ as a bounded representation of $B_{1}\hat{\otimes}B_{2}$
is clear, as is (1).

Since obviously $\overline{\textup{span}(\pi_{1}\hat{\odot}\pi_{2}(B_{1}\hat{\otimes}B_{2})X)}\subseteq\overline{\textup{span}(\pi_{i}(B_{i})X)}$
for $i\in\{1,2\}$, the non-degeneracy of $\pi_{1}\hat{\odot}\pi_{2}$
implies the non-degeneracy of both $\pi_{1}$ and $\pi_{2}$.

Conversely, assume that both $\pi_{1}$ and $\pi_{2}$ are non-degenerate,
and let $x\in X$ and $\varepsilon>0$ be arbitrary. Choose $b_{1}^{(j)}\in B_{1}$
and $x^{(j)}\in X$ with $j\in\{1,\ldots,n\}$ such that $\left\Vert x-\sum_{j=1}^{n}\pi_{1}(b_{1}^{(j)})x^{(j)}\right\Vert \leq\varepsilon/2$.
Next, choose $b_{2}^{(j,k)}\in B_{2}$ and $x^{(j,k)}\in X$ with
$j\in\{1,\ldots,n\}$ and $k\in\{1,\ldots,m_{j}\}$ such that $\|\pi_{1}(b_{1}^{(j)})\|\left\Vert x^{(j)}-\sum_{k=1}^{m_{j}}\pi_{2}(b_{2}^{(j,k)})x^{(j,k)}\right\Vert \leq\varepsilon/2n$
for all $j\in\{1,\ldots,n\}$. Then 
\begin{eqnarray*}
 &  & \left\Vert x-\sum_{j=1}^{n}\sum_{k=1}^{m_{j}}\pi_{1}\odot\pi_{2}(b_{1}^{(j)}\otimes b_{2}^{(j,k)})x^{(j,k)}\right\Vert \\
 & \leq & \left\Vert x-\sum_{j=1}^{n}\pi_{1}(b_{1}^{(j)})x^{(j)}\right\Vert +\left\Vert \sum_{j=1}^{n}\pi_{1}(b_{1}^{(j)})x^{(j)}-\sum_{j=1}^{n}\sum_{k=1}^{m_{j}}\pi_{1}\odot\pi_{2}(b_{1}^{(j)}\otimes b_{2}^{(j,k)})x^{(j,k)}\right\Vert \\
 & \leq & \frac{\varepsilon}{2}+\sum_{j=1}^{n}\|\pi_{1}(b_{1}^{(j)})\|\left\Vert x^{(j)}-\sum_{k=1}^{m_{j}}\pi_{2}(b_{2}^{(j,k)})x^{(j,k)}\right\Vert \\
 & < & \varepsilon.
\end{eqnarray*}
Hence $\pi_{1}\hat{\odot}\pi_{2}$ is non-degenerate.\end{proof}
If both $B_{1}$ and $B_{2}$ have a bounded approximate left identity,
then all non-degenerate bounded representations of $B_{1}\hat{\otimes}B_{2}$
arise in this fashion for unique (necessarily non-degenerate, in view
of Lemma \ref{lem:sun-product-of-commuting-representations}) bounded
$\pi_{1}$ and $\pi_{2}$. More precisely, we have the following result,
for which we have not been able to find a reference.
\begin{prop}
\label{prop:sun-products-are-unique}Let $B_{1}$ and $B_{2}$ be
Banach algebras both having a bounded approximate left identity, and
let $X$ be a Banach space. If $\pi_{1}:B_{1}\to B(X)$ and $\pi_{2}:B_{2}\to B(X)$
are commuting non-degenerate bounded representations, then $\pi_{1}\hat{\odot}\pi_{2}:B_{1}\hat{\otimes}B_{2}\to B(X)$
is a non-degenerate bounded representation, and all non-degenerate
bounded representations of $B_{1}\hat{\otimes}B_{2}$ are obtained
in this fashion, for unique non-degenerate bounded representations
$\pi_{1}$ and $\pi_{2}$. Then
\begin{enumerate}
\item $\|\pi_{1}\hat{\odot}\pi_{2}\|\leq\|\pi_{1}\|\|\pi_{2}\|$
\item If, for $i\in\{1,2\}$, $B_{i}$ has an $M_{i}$-bounded approximate
left identity, then $\|\pi_{i}\|\leq M_{1}M_{2}\|\lambda_{B_{i}}\|\|\pi_{1}\hat{\odot}\pi_{2}\|$,
with $\lambda_{B_{i}}:B_{i}\to B(B_{i})$ denoting the left regular
representation of $B_{i}$.
\end{enumerate}
\end{prop}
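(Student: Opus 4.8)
The plan is to dispatch the two assertions of the statement separately. The first—that a commuting pair of non-degenerate bounded representations $\pi_1,\pi_2$ gives rise to a non-degenerate bounded representation $\pi_1\hat{\odot}\pi_2$, together with the bound (1)—is precisely the content of Lemma \ref{lem:sun-product-of-commuting-representations}, so nothing new is needed there. The real work is the converse: producing a commuting factorizing pair from an arbitrary non-degenerate bounded representation $T\colon B_1\hat{\otimes}B_2\to B(X)$, together with uniqueness and the bound (2).

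To recover $\pi_1$ and $\pi_2$, I would first embed $B_1$ and $B_2$ into the left centralizer algebra of the tensor product. For $b_1\in B_1$ put $\mu_1(b_1):=\lambda_{B_1}(b_1)\hat{\otimes}\textup{id}_{B_2}$ and for $b_2\in B_2$ put $\mu_2(b_2):=\textup{id}_{B_1}\hat{\otimes}\lambda_{B_2}(b_2)$, viewed as operators on $B_1\hat{\otimes}B_2$. Checking on elementary tensors shows each is a left centralizer, that $\mu_i\colon B_i\to\leftcent(B_1\hat{\otimes}B_2)$ is a bounded homomorphism with $\|\mu_i(b_i)\|\le\|\lambda_{B_i}\|\,\|b_i\|$ (using $\|S\hat{\otimes}U\|=\|S\|\,\|U\|$ for the projective tensor product), and—crucially—that $\mu_1(b_1)\mu_2(b_2)=\mu_2(b_2)\mu_1(b_1)=\lambda(b_1\otimes b_2)$, where $\lambda$ is the left regular representation of $B_1\hat{\otimes}B_2$. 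Since $B_1\hat{\otimes}B_2$ inherits a bounded approximate left identity from those of $B_1$ and $B_2$ (the net of elementary tensors $e_j\otimes f_k$, with bound $M_1M_2$), the centralizer machinery of \cite[Theorem 4.1]{2009arXiv0904.3268D} yields $\overline{T}\colon\leftcent(B_1\hat{\otimes}B_2)\to B(X)$, a bounded homomorphism satisfying $\overline{T}\circ\lambda=T$ and $\|\overline{T}\|\le M_1M_2\|T\|$. I then set $\pi_i:=\overline{T}\circ\mu_i$.

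These $\pi_i$ are bounded homomorphisms, and they commute because $\mu_1(b_1)$ and $\mu_2(b_2)$ commute in $\leftcent(B_1\hat{\otimes}B_2)$. The key identity is $\pi_1(b_1)\pi_2(b_2)=\overline{T}(\mu_1(b_1)\mu_2(b_2))=\overline{T}(\lambda(b_1\otimes b_2))=T(b_1\otimes b_2)$; as elementary tensors span a dense subspace and both maps are bounded, this gives $\pi_1\hat{\odot}\pi_2=T$. Non-degeneracy of $\pi_1$ and $\pi_2$ is then immediate from part (2) of Lemma \ref{lem:sun-product-of-commuting-representations} applied to $T=\pi_1\hat{\odot}\pi_2$, and the bound (2) follows by the estimate $\|\pi_i\|\le\|\overline{T}\|\,\|\mu_i\|\le M_1M_2\|T\|\,\|\lambda_{B_i}\|$. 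For uniqueness I would use that a bounded approximate left identity $(e_j)$ of $B_2$ satisfies $\pi_2(e_j)\to\textup{id}_X$ strongly for every non-degenerate representation $\pi_2$ of $B_2$ (verified on the dense span $\pi_2(B_2)X$ via $e_jb\to b$ and uniform boundedness); if $\pi_1'\hat{\odot}\pi_2'=T$ is a second factorization, then for each $b_1$ both $\pi_1(b_1)$ and $\pi_1'(b_1)$ equal $\textup{SOT-}\lim_j T(b_1\otimes e_j)$, forcing $\pi_1=\pi_1'$, and symmetrically $\pi_2=\pi_2'$.

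The anticipated obstacle is bookkeeping rather than conceptual: carefully verifying that $\mu_i$ lands in $\leftcent(B_1\hat{\otimes}B_2)$ and that $\mu_1(b_1)\mu_2(b_2)=\lambda(b_1\otimes b_2)$, and confirming both the existence and the $M_1M_2$-bound of the approximate left identity on $B_1\hat{\otimes}B_2$ so that the estimate $\|\overline{T}\|\le M_1M_2\|T\|$ is genuinely available. Once these are in place, the remaining steps are routine assembly of the centralizer formalism recorded in Section \ref{sec:Preliminaries-and-recapitulation}.
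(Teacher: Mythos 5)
Your proposal is correct and follows essentially the same route as the paper: both construct $\pi_i$ by composing $\overline{T}$ with the left-centralizer embeddings $b_1\mapsto\lambda_{B_1}(b_1)\hat{\otimes}\textup{id}_{B_2}$ and $b_2\mapsto\textup{id}_{B_1}\hat{\otimes}\lambda_{B_2}(b_2)$, using the $M_1M_2$-bounded approximate left identity of $B_1\hat{\otimes}B_2$ to obtain $\overline{T}$ and the stated norm bounds. Your verification that $\pi_1\hat{\odot}\pi_2=T$ via the identity $\mu_1(b_1)\mu_2(b_2)=\lambda(b_1\otimes b_2)$ together with $\overline{T}\circ\lambda=T$, and your uniqueness argument via $\textup{SOT-}\lim_j T(b_1\otimes e_j)$, are only cosmetic variants of the paper's computations (which instead use $\overline{T}(L)T(a)=T(La)$ and the non-degeneracy of $T$ on $B_1\otimes B_2$).
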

\begin{proof}
Part of the proposition, including (1), has already been established
in Lemma \ref{lem:sun-product-of-commuting-representations}. We start
from a given non-degenerate bounded representation $\pi:B_{1}\hat{\otimes}B_{2}\to B(X)$
and construct the non-degenerate bounded representations $\pi_{1}$
and $\pi_{2}$ such that $\pi=\pi_{1}\hat{\odot}\pi_{2}$. First,
we note that $B_{1}\hat{\otimes}B_{2}$ has an approximate left identity
bounded by $M_{1}M_{2}$ \cite[Lemma 1.5.3]{Kaniuth}. Therefore,
if we let $\overline{\pi}:\leftcent(B_{1}\hat{\otimes}B_{2})\to B(X)$
denote the non-degenerate bounded representations of $\leftcent(B_{1}\hat{\otimes}B_{2})$
such that the diagram 
\[
\xymatrix{B_{1}\hat{\otimes}B_{2}\ar[r]^{\pi}\ar[dr]^{\lambda} & B(X)\\
 & \leftcent(B_{1}\hat{\otimes}B_{2})\ar[u]_{\overline{\pi}}
}
\]
commutes, then $\|\overline{\pi}\|\leq M_{1}M_{2}\|\pi\|$ \cite[Theorem 4.1]{2009arXiv0904.3268D}.
We will now compose $\overline{\pi}$ with bounded homomorphisms of
$B_{1}$ and $B_{2}$ into $\leftcent(B_{1}\hat{\otimes}B_{2})$ to
obtain the sought representations $\pi_{1}$ and $\pi_{2}$. For $b_{1}\in B_{1}$
consider $\lambda_{B_{1}}(b_{1})\hat{\otimes}\textup{id}_{B_{2}}\in B(B_{1}\hat{\otimes}B_{2})$,
where $\lambda_{B_{1}}(b_{1})$ is the image under the left regular
representation $\lambda_{B_{1}}:B_{1}\to B(B_{1})$ of $B_{1}$. Clearly,
$\|\lambda_{B_{1}}(b_{1})\hat{\otimes}\textup{id}_{B_{2}}\|=\|\lambda_{B_{1}}(b_{1})\|\leq\|\lambda_{B_{1}}\|\|b_{1}\|$,
and one readily verifies that $\lambda_{B_{1}}(b_{1})\hat{\otimes}\textup{id}_{B_{2}}\in\leftcent(B_{1}\hat{\otimes}B_{2})$.
If we define $l_{1}:B_{1}\to\leftcent(B_{1}\hat{\otimes}B_{2})$ by
$l_{1}(b_{1}):=\lambda_{B_{1}}(b_{1})\hat{\otimes}\textup{id}_{B_{2}}$
for $b_{1}\in B_{1}$, then $l_{1}$ is a bounded homomorphism, and
$\|l_{1}\|\leq\|\lambda_{B_{1}}\|$. Likewise, $l_{2}:B_{2}\to\leftcent(B_{1}\hat{\otimes}B_{2}),$
defined by $l_{2}(b_{2}):=\textup{id}_{B_{1}}\hat{\otimes}\lambda_{B_{2}}(b_{2})$
for $b_{2}\in B_{2}$, is a bounded homomorphism, and $\|l_{2}\|\leq\|\lambda_{B_{2}}\|$.
Now, for $i\in\{1,2\}$, define $\pi_{i}:B_{i}\to B(X)$ as $\pi_{i}:=\overline{\pi}\circ l_{i}$.
We note that $\|\pi_{i}\|\leq\|\overline{\pi}\|\|l_{i}\|\leq M_{1}M_{2}\|\lambda_{B_{i}}\|\|\pi\|$.
Since $l_{1}$ and $l_{2}$ obviously commute, the same holds true
for $\pi_{1}$ and $\pi_{2}$. Therefore $\pi_{1}\hat{\odot}\pi_{2}:B_{1}\hat{\otimes}B_{2}\to B(X)$
is a bounded representation.

We will proceed to show that $\pi_{1}\hat{\odot}\pi_{2}=\pi$, and
that $\pi_{1}$ and $\pi_{2}$ are uniquely determined. We compute,
for $x\in X$, $b_{1}^{(1)},b_{1}^{(2)}\in B_{1}$ and $b_{2}^{(1)},b_{2}^{(2)}\in B_{2}$:
\begin{eqnarray*}
 &  & \pi_{1}\hat{\odot}\pi_{2}(b_{1}^{(1)}\otimes b_{2}^{(1)})\pi(b_{1}^{(2)}\otimes b_{2}^{(2)})x\\
 & = & \pi_{1}(b_{1}^{(1)})\pi_{2}(b_{2}^{(1)})\pi(b_{1}^{(2)}\otimes b_{2}^{(2)})x\\
 & = & \overline{\pi}(\lambda_{B_{1}}(b_{1}^{(1)})\hat{\otimes}\textup{id}_{B_{2}})\overline{\pi}(\textup{id}_{B_{1}}\hat{\otimes}\lambda_{B_{2}}(b_{2}^{(1)}))\pi(b_{1}^{(2)}\otimes b_{2}^{(2)})x\\
 & = & \overline{\pi}(\lambda_{B_{1}}(b_{1}^{(1)})\hat{\otimes}\textup{id}_{B_{2}})\pi(\textup{id}_{B_{1}}\hat{\otimes}\lambda_{B_{2}}(b_{2}^{(1)})(b_{1}^{(2)}\otimes b_{2}^{(2)}))x\\
 & = & \pi(\lambda_{B_{1}}(b_{1}^{(1)})\hat{\otimes}\textup{id}_{B_{2}}(b_{1}^{(2)}\otimes b_{2}^{(1)}b_{2}^{(2)}))x\\
 & = & \pi(b_{1}^{(1)}b_{1}^{(2)}\otimes b_{2}^{(1)}b_{2}^{(2)})x\\
 & = & \pi(b_{1}^{(1)}\otimes b_{2}^{(1)})\pi(b_{1}^{(2)}\otimes b_{2}^{(2)})x.
\end{eqnarray*}
Since $\pi$ is non-degenerate and $B_{1}\otimes B_{2}$ is dense
in $B_{1}\hat{\otimes}B_{2}$, the restriction of $\pi$ to $B_{1}\otimes B_{2}$
is also non-degenerate. Hence we conclude from the above that $\pi_{1}\hat{\odot}\pi_{2}(b_{1}\otimes b_{2})=\pi(b_{1}\otimes b_{2})$
for all $b_{1}\in B_{1}$ and $b_{2}\in B_{2}$, i.e., that $\pi_{1}\hat{\odot}\pi_{2}=\pi$.
It is now clear that $\|\pi_{i}\|\leq M_{1}M_{2}\|\lambda_{B_{i}}\|\|\pi_{1}\hat{\odot}\pi_{2}\|$.
As already mentioned preceding the proposition, $\pi_{1}$ and $\pi_{2}$
are necessarily non-degenerate. 

As to uniqueness, assume that $\rho_{1}:B_{1}\to B(X)$ and $\rho_{2}:B_{2}\to B(X)$
are commuting bounded representations such that $\rho_{1}\hat{\odot}\rho_{2}=\pi$.
Then, for $x\in X$, $b_{1},b_{1}'\in B$ and $b_{2}'\in B_{2}$,
\begin{eqnarray*}
\rho_{1}(b_{1})\pi(b_{1}'\otimes b_{2}')x & = & \rho_{1}(b_{1})\rho_{1}\hat{\odot}\rho_{2}(b_{1}'\otimes b_{2}')x\\
 & = & \rho_{1}(b_{1})\rho_{1}(b_{1}')\rho_{2}(b_{2}')x\\
 & = & \rho_{1}(b_{1}b_{1}')\rho_{2}(b_{2}')x\\
 & = & \rho_{1}\hat{\odot}\rho_{2}(b_{1}b_{1}'\otimes b_{2}')x\\
 & = & \pi(\lambda_{B_{1}}(b_{1})\hat{\otimes}\textup{id}_{B_{2}}(b_{1}'\otimes b_{2}'))x\\
 & = & \overline{\pi}(\lambda_{B_{1}}(b_{1})\hat{\otimes}\textup{id}_{B_{2}})\pi(b_{1}'\otimes b_{2}')x\\
 & = & \pi_{1}(b_{1})\pi(b_{1}'\otimes b_{2}')x.
\end{eqnarray*}
The non-degeneracy of $\pi$ then implies that necessarily $\rho_{1}=\pi_{1}$
and likewise that $\rho_{2}=\pi_{2}$.\end{proof}
The following is now simply a matter of combining the General Correspondence
Theorem (Theorem \ref{thm:General-Correspondence-Theorem}), Lemma
\ref{lem:sun-product-of-commuting-representations}, Proposition \ref{prop:sun-products-are-unique},
and an induction argument.
\begin{thm}
\label{thm:sun-product-of-tensor-crossed-products}For $i\in\{1,\ldots,n\}$,
let $\BADSsystemi$ be a Banach algebra dynamical system, where $A_{i}$
has a bounded approximate left identity, and $\mathcal{R}_{i}$ is
a non-empty uniformly bounded class of non-degenerate continuous covariant
representations of $\BADSsystemi$. Let $X$ be a Banach space. Let
$((\pi_{1},U_{1}),\ldots,(\pi_{n},U_{n}))$ be an $n$-tuple where,
for each $i\in\{1,\ldots,n\}$, the pair $(\pi_{i},U_{i})$ is a non-degenerate
$\mathcal{R}_{i}$-continuous covariant representation of $\BADSsystemi$
on $X$, and all $(\pi_{i},U_{i})$ and $(\pi_{j},U_{j})$ commute
for all $i,j\in\{1,\ldots,n\}$ with $i\neq j$. Then the map sending
$((\pi_{1},U_{1}),\ldots,(\pi_{n},U_{n}))$ to the representation
\[
\widehat{\bigodot}_{i=1}^{n}(\pi_{i}\rtimes U_{i})^{\mathcal{R}_{i}}:\widehat{\bigotimes}_{i=1}^{n}\BADScrossedprodi\to B(X),
\]
is a bijection between the set of all such $n$-tuples and the set
of all non-degenerate bounded representations of $\widehat{\bigotimes}_{i=1}^{n}\BADScrossedprodi$
on $X$.
\end{thm}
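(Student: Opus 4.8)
The plan is to reduce the statement to three ingredients: the General Correspondence Theorem (Theorem \ref{thm:General-Correspondence-Theorem}) applied coordinatewise, Lemma \ref{commuting-covars-and-integrated-forms} to translate commutation of covariant representations into commutation of their integrated forms, and a multi-factor version of Proposition \ref{prop:sun-products-are-unique} obtained by an induction on $n$. Write $B_{i}:=\BADScrossedprodi$. Since each $A_{i}$ has a bounded approximate left identity, so does each $B_{i}$ by \cite[Theorem 4.4]{2011arXiv1104.5151D}, and hence, iterating \cite[Lemma 1.5.3]{Kaniuth}, so does every partial projective tensor product $\widehat{\bigotimes}_{i\in J}B_{i}$; this is what will make the inductive application of Proposition \ref{prop:sun-products-are-unique} legitimate.

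First I would establish, by induction on $n$, the following tensor-factorization statement: for Banach algebras $B_{1},\dots,B_{n}$ each with a bounded approximate left identity, the map $(\rho_{1},\dots,\rho_{n})\mapsto\widehat{\bigodot}_{i=1}^{n}\rho_{i}$ is a bijection between $n$-tuples of pairwise commuting non-degenerate bounded representations $\rho_{i}:B_{i}\to B(X)$ and non-degenerate bounded representations of $\widehat{\bigotimes}_{i=1}^{n}B_{i}$. The case $n=1$ is trivial and $n=2$ is exactly Proposition \ref{prop:sun-products-are-unique}, with Lemma \ref{lem:sun-product-of-commuting-representations} supplying well-definedness and non-degeneracy of $\rho_{1}\hat{\odot}\rho_{2}$. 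For the inductive step I would use the (associative, isometric) identification $\widehat{\bigotimes}_{i=1}^{n}B_{i}\cong B'\hat{\otimes}B_{n}$ with $B':=\widehat{\bigotimes}_{i=1}^{n-1}B_{i}$, split a given non-degenerate bounded representation $\pi$ as $\pi=\rho'\hat{\odot}\rho_{n}$ with $\rho',\rho_{n}$ commuting and non-degenerate via Proposition \ref{prop:sun-products-are-unique}, and then factor $\rho'=\widehat{\bigodot}_{i<n}\rho_{i}$ by the induction hypothesis.

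The one genuinely non-formal point, and the step I expect to be the main obstacle, is to verify that commutation of $\rho'$ with $\rho_{n}$ forces each individual factor $\rho_{i}$ $(i<n)$ to commute with $\rho_{n}$, so that the full $n$-tuple is pairwise commuting. Here I would recover each factor from $\rho'$ exactly as in the proof of Proposition \ref{prop:sun-products-are-unique}: regrouping $B'\cong B_{i}\hat{\otimes}C_{i}$ with $C_{i}:=\widehat{\bigotimes}_{j<n,\,j\neq i}B_{j}$ and setting $l_{i}:B_{i}\to\leftcent(B')$, $l_{i}(b):=\lambda_{B_{i}}(b)\hat{\otimes}\mathrm{id}_{C_{i}}$, one has $\rho_{i}=\overline{\rho'}\circ l_{i}$, where $\overline{\rho'}:\leftcent(B')\to B(X)$ is the induced representation of the left centralizer algebra. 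Using the defining property $\overline{\rho'}(L)\rho'(b')=\rho'(Lb')$ together with $\rho_{n}(c)\rho'(b')=\rho'(b')\rho_{n}(c)$, a short computation gives, for $b\in B_{i}$, $b'\in B'$, $c\in B_{n}$ and $x\in X$,
\[
\overline{\rho'}(l_{i}(b))\,\rho_{n}(c)\,\rho'(b')x=\rho'(l_{i}(b)b')\,\rho_{n}(c)x=\rho_{n}(c)\,\overline{\rho'}(l_{i}(b))\,\rho'(b')x,
\]
so that, by the non-degeneracy of $\rho'$, $\rho_{i}=\overline{\rho'}\circ l_{i}$ commutes with $\rho_{n}$. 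Associativity of $\hat{\odot}$ then yields $\pi=\widehat{\bigodot}_{i=1}^{n}\rho_{i}$, while uniqueness of the factorization follows from the uniqueness in Proposition \ref{prop:sun-products-are-unique} combined with the induction hypothesis; this completes the induction.

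Finally I would assemble the theorem. Given a pairwise commuting $n$-tuple $((\pi_{i},U_{i}))_{i=1}^{n}$ of non-degenerate $\mathcal{R}_{i}$-continuous covariant representations, Lemma \ref{commuting-covars-and-integrated-forms} shows the integrated forms $\rho_{i}:=(\pi_{i}\rtimes U_{i})^{\mathcal{R}_{i}}$ pairwise commute, whence by the tensor-factorization statement $\widehat{\bigodot}_{i=1}^{n}\rho_{i}$ is a well-defined non-degenerate bounded representation of $\widehat{\bigotimes}_{i=1}^{n}B_{i}$, which is precisely the map in the statement. Conversely, any non-degenerate bounded representation of $\widehat{\bigotimes}_{i=1}^{n}B_{i}$ factors uniquely as $\widehat{\bigodot}_{i=1}^{n}\rho_{i}$ with the $\rho_{i}$ pairwise commuting; by Theorem \ref{thm:General-Correspondence-Theorem} each $\rho_{i}$ corresponds to a unique non-degenerate $\mathcal{R}_{i}$-continuous covariant representation $(\pi_{i},U_{i})$ with $\rho_{i}=(\pi_{i}\rtimes U_{i})^{\mathcal{R}_{i}}$, and Lemma \ref{commuting-covars-and-integrated-forms} transfers the pairwise commutation of the $\rho_{i}$ back to the pairs $(\pi_{i},U_{i})$. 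Injectivity and surjectivity of the displayed map then follow from the uniqueness assertions in Theorem \ref{thm:General-Correspondence-Theorem} and in the tensor-factorization statement, finishing the proof.
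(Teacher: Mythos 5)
Your proposal is correct and follows exactly the route the paper indicates (the paper's proof is a one-line instruction to combine Theorem \ref{thm:General-Correspondence-Theorem}, Lemma \ref{lem:sun-product-of-commuting-representations}, Proposition \ref{prop:sun-products-are-unique} and an induction); you have simply supplied the details. In particular you correctly isolate and resolve the one non-formal point in the inductive step, namely that commutation of $\rho'$ with $\rho_{n}$ propagates to each factor $\rho_{i}=\overline{\rho'}\circ l_{i}$ via the identity $\overline{\rho'}(L)\rho'(b')=\rho'(Lb')$ and the non-degeneracy of $\rho'$.
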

For the sake of completeness, we mention that the commutativity assumption
applies only to the non-degenerate $\mathcal{R}_{i}$-continuous covariant
representations $(\pi_{i},U_{i})$, not to the elements of $\mathcal{R}_{i}$.

In Remark \ref{rem:applying-sun-tensor-to-beurling-bimodules} we
will apply Theorem \ref{thm:sun-product-of-tensor-crossed-products}
to relate bimodules over generalized Beurling algebras to left modules
over the projective tensor product of the algebra acting on the left
and the opposite algebra of the one acting on the right.

\section{Right and bimodules over generalized Beurling algebras\label{sec:Beurling-Right-and-bimodules}}

\global\long\def\OppBeurlingTypeAlg{L^{1}(G^{o},A^{o},\omega^{o};\alpha^{o})}

\global\long\def\BBeurlingTypeAlg{L^{1}(H,B,\eta;\beta)}

\global\long\def\OppBBeurlingTypeAlg{L^{1}(H^{o},B^{o},\eta^{o};\beta^{o})}

Let $(A,G,\alpha)$ be a Banach algebra dynamical system, where $A$
has a bounded two-sided approximate identity and $\alpha$ is uniformly
bounded, and let $\omega$ be a weight on $G$. In Section \ref{sec:Applications_L1_and_Beurling_algebras}
we have seen that the Banach space $L^{1}(G,A,\omega)$ has the structure
of an associative algebra, denoted $\BeurlingTypeAlg$, with multiplication
continuous in both variables, determined by

\[
[f*_{\alpha}g](s):=\int_{G}f(r)\alpha_{r}(g(r^{-1}s))\, d\mu(r)\quad(f,g\in C_{c}(G,A),\: s\in G).
\]
Here we have written $*_{\alpha}$ rather than $*$ to indicate the
$\alpha$-dependence of the multiplication (twisted convolution) on
$C_{c}(G,A)$, as another multiplication will also appear. For the
same reason we have now also written $d\mu$ for the chosen left Haar
measure on $G$. Furthermore, we have seen in Section \ref{sec:Applications_L1_and_Beurling_algebras}
that $\BeurlingTypeAlg$ is isomorphic to the Banach algebra $\crossedprod,$
when $\mathcal{R}$ is chosen suitably. As a consequence of the General
Correspondence Theorem (Theorem \ref{thm:General-Correspondence-Theorem}),
it was then shown that if $(\pi,U)$ is a non-degenerate continuous
covariant representation of $(A,G,\alpha)$, such that $\|U_{r}\|\leq C_{U}\omega(r)$
for all $r\in G$, then $\pi\rtimes U(f)=\int\pi(f)U_{r}\, d\mu(r)$,
for $f\in C_{c}(G,A)$, determines a non-degenerate bounded representation
of $\BeurlingTypeAlg$, and that all non-degenerate bounded representations
of $\BeurlingTypeAlg$ are uniquely determined in this way by such
pairs $(\pi,U)$. 

In this section we will explain how the non-degenerate bounded anti-representations
of $\BeurlingTypeAlg$ (i.e.,\,the non-degenerate right $\BeurlingTypeAlg$-modules)
are in natural bijection with the pairs $(\pi,U)$, where $\pi:A\to B(X)$
is non-degenerate, bounded and anti-multiplicative, $U:G\to B(X)$
is strongly continuous and anti-multiplicative, satisfy 
\[
U_{r}\pi(a)U_{r}^{-1}=\pi(\alpha_{r^{-1}}(a))\quad(a\in A,\: r\in G),
\]
(i.e.,\,with the non-degenerate continuous pairs $(\pi,U)$ of type
$(a,a)$ as in Section~\ref{sec:Other-types}, called thrice ``flawed''
in the introduction) and are such that $\|U_{r}\|\leq C_{U}\omega(r),$
for some $C_{U}\geq0$ and all $r\in G$. This may look counterintuitive
to the idea of Section \ref{sec:Other-types}, where it was argued
that one can ``always'' reinterpret given data so as to end up with
pairs of type $(m,m)$ for a (companion) Banach algebra dynamical
system, and then formulate a General Correspondence Theorem involving
the non-degenerate bounded representations of a companion crossed
product: anti-representations of the resulting crossed product never
enter the picture. Yet this is precisely what we will do, but it is
only the first step. 

In this first step the relevant crossed product will, as in Section
\ref{sec:Applications_L1_and_Beurling_algebras}, turn out to be topologically
isomorphic to $\OppBeurlingTypeAlg$ (where $\omega^{o}$ equals $\omega$,
seen as a weight on $G^{o}$). As it happens, $\OppBeurlingTypeAlg$
is topologically anti-isomorphic to $\BeurlingTypeAlg$. Hence, in
the second step, the non-degenerate bounded representations of $\OppBeurlingTypeAlg$
are viewed as the non-degenerate bounded anti-representations of $\BeurlingTypeAlg$,
which are thus, in the end, related to pairs $(\pi,U)$ of type $(a,a)$
as above. For this result, therefore, one should not think of $\BeurlingTypeAlg$
as being topologically isomorphic to a crossed product as in Section
\ref{sec:Applications_L1_and_Beurling_algebras}. Although this is
also the case, its main feature here is that it is anti-isomorphic
to $\OppBeurlingTypeAlg$ which, in turn, is topologically isomorphic
to the crossed product that ``actually'' explains the situation.

Once this has been completed, we remind ourselves again that $\BeurlingTypeAlg$
itself is topologically isomorphic to a crossed product, and combine
the results in the first part of this section with those in Sections
\ref{sec:Applications_L1_and_Beurling_algebras} and \ref{sec:Several-BADS}
in Theorem \ref{thm:Beurling-bimodules}, to describe for two Banach
algebra dynamical systems $(A,G,\alpha)$ and $(B,H,\beta)$ the non-degenerate
simultaneously left $\BeurlingTypeAlg$-- and right $\BBeurlingTypeAlg$-modules,
and, in the special case where $(A,G,\alpha)=(B,H,\beta)$, the non-degenerate
$\BeurlingTypeAlg$-bimodules.

To start, recall that the canonical left invariant measure $\mu$
on the opposite group $G^{o}$ of $G$ is given by $\mu^{o}(E):=\mu(E^{-1})$,
for $E$ a Borel subset of $G$. Then, recalling that $\int_{G}f\, d\mu=\int_{G}f(r^{-1})\Delta(r^{-1})\, d\mu(r)$
\cite[Lemma 1.67]{Williams}, for $f\in C_{c}(G)$, we have 
\[
\int_{G^{o}}f(r)\, d\mu^{o}(r)=\int_{G}f(r^{-1})\, d\mu(r)=\int_{G}f(r)\Delta(r^{-1})\, d\mu(r).
\]
We recall from Section \ref{sec:Other-types} if $(A,G,\alpha)$ is
a Banach algebra dynamical system, then so is $(A^{o},G^{o},\alpha^{o})$,
where $A^{o}$ is the opposite algebra of $A$, $G^{o}$ is the opposite
group of $G$, and $\alpha^{o}:G^{o}\to\mbox{Aut}(A^{o})=\mbox{Aut}(A)$
is given by $\alpha_{s}^{o}=\alpha_{s^{-1}}$ for all $s\in G^{o}$.
The vector spaces $C_{c}(G,A)$ and $C_{c}(G^{o},A^{o})$ can be identified,
but there are two convolution structures on it. If $\circledcirc$
denotes the multiplication in $A^{o}$ and $G^{o}$, then 
\[
[f*_{\alpha}g](s)=\int_{G}f(r)\alpha_{r}(g(r^{-1}s))\, d\mu(r)\quad(f,g\in C_{c}(G,A),\ s\in G),
\]
 and 
\[
[f*_{\alpha^{o}}g](s)=\int_{G}f(r)\circledcirc\alpha_{r}^{o}(g(r^{-1}\circledcirc s))\, d\mu^{o}(r)\quad(f,g\in C_{c}(G^{o},A^{o}),\ s\in G^{o}).
\]
Hence we have two associative algebras: $C_{c}(G,A)$ with multiplication
$*_{\alpha}$, and $C_{c}(G^{o},A^{o})$ with multiplication $*_{\alpha^{o}}$,
having the same underlying vector space. The first observation we
need is then the following:
\begin{lem}
\label{lem:hat-is-anti-isomorphism}Let $(A,G,\alpha)$ be a Banach
algebra dynamical system with companion opposite system $(A^{o},G^{o},\alpha^{o})$,
and let $\chi:G\to\mathbb{C}^{\times}$ be a continuous character
of $G$. For $f\in C_{c}(G,A)$, define $\hat{f}\in C_{c}(G^{o},A^{o})$
by $\hat{f}(s):=\chi(s^{-1})\alpha_{s^{-1}}(f(s))$ for $s\in G^{o}$.
Then the map $f\mapsto\hat{f}$ is an anti-isomorphism of the associative
algebras $C_{c}(G,A)$ with multiplication $*_{\alpha}$, and $C_{c}(G^{o},A^{o})$
with multiplication $*_{\alpha^{o}}$. The inverse is given by $g\mapsto\check{g}$,
where $\check{g}(s):=\chi(s)\alpha_{s}(g(s))$ for $g\in C_{c}(G^{o},A^{o})$
and $s\in G$.\end{lem}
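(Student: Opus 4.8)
The plan is to verify three things in turn: that $f\mapsto\hat f$ is a well-defined linear map into $C_{c}(G^{o},A^{o})$, that it is a bijection with inverse $g\mapsto\check g$, and that it reverses products, i.e.\ that $\widehat{f*_{\alpha}g}=\hat g*_{\alpha^{o}}\hat f$. Linearity is immediate from the defining formula, and $\hat f$ is continuous with $\textup{supp}\,\hat f=\textup{supp}\,f$ compact, since $\chi$ is continuous and $\alpha$ is strongly continuous (so that $s\mapsto\alpha_{s^{-1}}(f(s))$ is continuous by the usual local-boundedness argument); hence $\hat f$ genuinely lies in $C_{c}(G^{o},A^{o})$.

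For the bijection, I would compute directly that $\hat{\,\cdot\,}$ and $\check{\,\cdot\,}$ are mutually inverse. Using that $\chi$ is multiplicative (so $\chi(s^{-1})=\chi(s)^{-1}$) and that $\alpha_{s}\alpha_{s^{-1}}=\alpha_{e}=\textup{id}$, one finds $\check{(\hat f)}(s)=\chi(s)\alpha_{s}\!\left(\chi(s^{-1})\alpha_{s^{-1}}(f(s))\right)=f(s)$ and, symmetrically, $\hat{(\check g)}=g$. This simultaneously yields injectivity and surjectivity, so the map is a linear bijection.

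The substance is the anti-multiplicativity, and my plan is a direct computation of both sides evaluated at a fixed $s$. On the left, I apply $\chi(s^{-1})\alpha_{s^{-1}}$ to $[f*_{\alpha}g](s)$, pulling the automorphism $\alpha_{s^{-1}}$ through the integral and using $\alpha_{s^{-1}}\alpha_{r}=\alpha_{s^{-1}r}$, to reach $\chi(s^{-1})\int_{G}\alpha_{s^{-1}}(f(r))\,\alpha_{s^{-1}r}(g(r^{-1}s))\,d\mu(r)$; the left-invariant substitution $r\mapsto sr$ then rewrites this as $\chi(s^{-1})\int_{G}\alpha_{s^{-1}}(f(sr))\,\alpha_{r}(g(r^{-1}))\,d\mu(r)$. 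On the right, I unwind $*_{\alpha^{o}}$ using that inverses in $G^{o}$ coincide with those in $G$, that $r^{-1}\circledcirc s=sr^{-1}$, that $\alpha^{o}_{r}=\alpha_{r^{-1}}$, and that $\circledcirc$ is the opposite product on $A$; substituting the definitions of $\hat g(r)$ and $\hat f(sr^{-1})$, collecting the scalars via $\chi(rs^{-1})\chi(r^{-1})=\chi(s^{-1})$ (where commutativity of $\mathbb{C}^{\times}$ is used), and converting the measure by $\int_{G^{o}}F\,d\mu^{o}(r)=\int_{G}F(r^{-1})\,d\mu(r)$, I obtain exactly the same expression. Comparing the two then gives $\widehat{f*_{\alpha}g}=\hat g*_{\alpha^{o}}\hat f$, as required.

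I expect the main obstacle to be purely bookkeeping: keeping straight the three simultaneous ``oppositions''—the reversed group product and inverse in $G^{o}$, the reversed algebra product $\circledcirc$ in $A^{o}$, and the twist $\alpha^{o}_{r}=\alpha_{r^{-1}}$—together with the measure change $d\mu^{o}\leftrightarrow d\mu$ and the single left-invariant substitution $r\mapsto sr$ that reconciles the two sides. Once these conventions are pinned down, every identity invoked ($\alpha_{s}$ a homomorphism, $\chi$ multiplicative into an abelian group, left invariance of $\mu$) is elementary, and there is no analytic difficulty beyond pulling the automorphism $\alpha_{s^{-1}}$ and the bounded point-evaluations through the (continuous, compactly supported) integrand, which is already justified by the integration machinery cited earlier.
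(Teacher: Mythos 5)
Your proposal is correct and follows essentially the same route as the paper: the paper also dismisses the mutual-inverse claim as routine and establishes anti-multiplicativity by a direct computation that unwinds $*_{\alpha^{o}}$ via $\circledcirc$, $\alpha^{o}_{r}=\alpha_{r^{-1}}$, the identity $\int_{G^{o}}F\,d\mu^{o}=\int_{G}F(r^{-1})\,d\mu$, and the left-invariant substitution $r\mapsto sr$. The only cosmetic difference is that you meet in the middle from both sides of $\widehat{f*_{\alpha}g}=\hat g*_{\alpha^{o}}\hat f$, whereas the paper runs a single chain from $[\hat f*_{\alpha^{o}}\hat g](s)$ to $(g*_{\alpha}f)^{\wedge}(s)$.
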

\begin{proof}
It is clear that $\hat{\cdot}$ and $\check{\cdot}$ are mutually
inverse linear bijections. As to the multiplicative structures, we
compute, for $f,g\in C_{c}(G,A)$ and $s\in G^{o}$,
\begin{eqnarray*}
[\hat{f}*_{\alpha^{o}}\hat{g}](s) & = & \int_{G^{o}}\hat{f}(r)\circledcirc\alpha_{r^{-1}}^{o}(\hat{g}(r^{-1}\circledcirc s))\, d\mu^{o}(r)\\
 & = & \int_{G}\hat{f}(r^{-1})\circledcirc\alpha_{r^{-1}}^{o}(\hat{g}(r\circledcirc s))\, d\mu(r)\\
 & = & \int_{G}\alpha_{r}(\hat{g}(sr))\hat{f}(r^{-1})\, d\mu(r)\\
 & = & \int_{G}\alpha_{r}(\chi((sr)^{-1})\alpha_{(sr)^{-1}}g(sr))\chi(r)\alpha_{r}(f(r^{-1}))\, d\mu(r)\\
 & = & \chi(s^{-1})\int_{G}\alpha_{s^{-1}}(g(sr))\alpha_{r}(f(r^{-1}))\, d\mu(r)\\
 & = & \chi(s^{-1})\alpha_{s^{-1}}\left(\int_{G}g(sr))\alpha_{sr}(f(r^{-1}))\, d\mu(r)\right)\\
 & = & \chi(s^{-1})\alpha_{s^{-1}}\left(\int_{G}g(r))\alpha_{r}(f(r^{-1}s))\, d\mu(r)\right)\\
 & = & (g*_{\alpha}f)^{\wedge}(s).
\end{eqnarray*}
\end{proof}
Choosing $\chi$ suitably, we obtain a topological isomorphism in
the next result.
\begin{prop}
\label{prop:anti-isomorphism-between-beurling-and-opposite-beurling}Let
$(A,G,\alpha)$ be a Banach algebra dynamical system, where $\alpha$
is uniformly bounded. Let $\omega$ be a weight on $G$ and view $\omega^{o}:=\omega$
also as a weight on $G^{o}$. Then the map $f\mapsto\hat{f}$, where
$\hat{f}(s):=\Delta(s)\alpha_{s^{-1}}(f(s))$ for $f\in C_{c}(G,A)$
and $s\in G^{o}$ defines a topological anti-isomorphism between $\BeurlingTypeAlg$
and $\OppBeurlingTypeAlg$. The inverse map is determined by $g\mapsto\check{g}$
where $\check{g}(s):=\Delta(s^{-1})\alpha_{s}(g(s))$ for $g\in C_{c}(G^{o},A^{o})$
and $s\in G$. \end{prop}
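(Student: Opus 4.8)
The plan is to apply Lemma \ref{lem:hat-is-anti-isomorphism} with the specific choice of character $\chi:=\Delta^{-1}$, where $\Delta$ denotes the modular function of $G$. Since $\Delta:G\to(0,\infty)\subseteq\mathbb{C}^{\times}$ is a continuous homomorphism, it is a continuous character, and one has $\chi(s^{-1})=\Delta(s^{-1})^{-1}=\Delta(s)$ and $\chi(s)=\Delta(s)^{-1}=\Delta(s^{-1})$. Hence the maps $\hat{\cdot}$ and $\check{\cdot}$ of the proposition are exactly those of Lemma \ref{lem:hat-is-anti-isomorphism} for this $\chi$, and that lemma already yields that $f\mapsto\hat{f}$ is an anti-isomorphism of the associative algebras $(C_{c}(G,A),*_{\alpha})$ and $(C_{c}(G^{o},A^{o}),*_{\alpha^{o}})$, with inverse $\check{\cdot}$. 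It therefore remains to promote this purely algebraic statement on the common dense subspace to a topological anti-isomorphism of the completions.

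The heart of the matter is a norm computation showing that $\hat{\cdot}$ is, up to the factor $C_{\alpha}$, isometric for the weighted norms. Using $\|\hat{f}(s)\|=\Delta(s)\|\alpha_{s^{-1}}(f(s))\|$, the identity $\omega^{o}=\omega$, and the change of variables $\int_{G^{o}}F\,d\mu^{o}=\int_{G}F(r)\Delta(r^{-1})\,d\mu(r)$ recorded just before Lemma \ref{lem:hat-is-anti-isomorphism}, I would obtain
\begin{align*}
\|\hat{f}\|_{1,\omega^{o}} &= \int_{G^{o}}\Delta(s)\|\alpha_{s^{-1}}(f(s))\|\,\omega(s)\,d\mu^{o}(s)\\
&= \int_{G}\Delta(r)\Delta(r^{-1})\|\alpha_{r^{-1}}(f(r))\|\,\omega(r)\,d\mu(r)\\
&= \int_{G}\|\alpha_{r^{-1}}(f(r))\|\,\omega(r)\,d\mu(r),
\end{align*}
the two occurrences of $\Delta$ cancelling precisely because of the choice $\chi=\Delta^{-1}$. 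Since $\alpha$ is uniformly bounded by $C_{\alpha}$, the estimates $C_{\alpha}^{-1}\|f(r)\|\leq\|\alpha_{r^{-1}}(f(r))\|\leq C_{\alpha}\|f(r)\|$ (the left-hand one following from $\|f(r)\|=\|\alpha_{r}\alpha_{r^{-1}}(f(r))\|$) then give
\[
C_{\alpha}^{-1}\|f\|_{1,\omega}\leq\|\hat{f}\|_{1,\omega^{o}}\leq C_{\alpha}\|f\|_{1,\omega}\qquad(f\in C_{c}(G,A)).
\]

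With this two-sided bound the conclusion is routine. The map $\hat{\cdot}$ is a linear bijection between the dense subspaces $C_{c}(G,A)\subseteq\BeurlingTypeAlg$ and $C_{c}(G^{o},A^{o})\subseteq\OppBeurlingTypeAlg$ that is bounded with bounded inverse, hence extends uniquely to a topological isomorphism of the Banach spaces $\BeurlingTypeAlg$ and $\OppBeurlingTypeAlg$, whose inverse is the extension of $\check{\cdot}$. Finally, anti-multiplicativity passes to the completions: the identity $(f*_{\alpha}g)^{\wedge}=\hat{g}*_{\alpha^{o}}\hat{f}$ holds on the dense subspace by Lemma \ref{lem:hat-is-anti-isomorphism}, and since the multiplications on both generalized Beurling algebras are continuous in both variables and $\hat{\cdot}$ is continuous, this identity persists for all elements. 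Thus the extension is the desired topological anti-isomorphism. I expect the only genuine subtlety to be the bookkeeping of the modular function in the norm computation; once the choice $\chi=\Delta^{-1}$ is made so that the $\Delta$ factors cancel, everything else is a standard extension-by-continuity argument.
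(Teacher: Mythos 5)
Your proposal is correct and follows essentially the same route as the paper: both invoke Lemma \ref{lem:hat-is-anti-isomorphism} (your choice $\chi=\Delta^{-1}$ is exactly the one the paper uses implicitly) and then verify norm equivalence on $C_{c}(G,A)$ via the change of variables $\int_{G^{o}}F\,d\mu^{o}=\int_{G}F(r)\Delta(r^{-1})\,d\mu(r)$, so that the $\Delta$ factors cancel and only the constant $C_{\alpha}$ remains. The sole cosmetic difference is that you obtain the two-sided bound for $\hat{\cdot}$ in one computation, whereas the paper proves the upper bound for $\hat{\cdot}$ and the analogous one for $\check{\cdot}$ separately.
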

\begin{proof}
In view of Lemma \ref{lem:hat-is-anti-isomorphism}, we need only
show that $\hat{\cdot}$ and $\check{\cdot}$ are isomorphisms between
the normed spaces $(C_{c}(G,A),\|\cdot\|_{1,\omega})$ and $(C_{c}(G^{o},A^{o}),\|\cdot\|_{1,\omega^{o}})$.
Let $\alpha$ be uniformly bounded by $C_{\alpha}$. If $f\in C_{c}(G,A)$,
then
\begin{eqnarray*}
\|\hat{f}\|_{1,\omega^{o}} & = & \int_{G^{o}}\|\hat{f}(r)\|\omega^{o}(r)\, d\mu^{o}(r)\\
 & = & \int_{G^{o}}\|\Delta(r)\alpha_{r^{-1}}(f(r))\|\omega(r)\, d\mu^{o}(r)\\
 & \leq & C_{\alpha}\int_{G^{o}}\|f(r)\|\omega(r)\Delta(r)\, d\mu^{o}(r)\\
 & = & C_{\alpha}\int_{G}\|f(r^{-1})\|\omega(r^{-1})\Delta(r^{-1})\, d\mu(r)\\
 & = & C_{\alpha}\int_{G}\|f(r)\|\omega(r)\, d\mu(r)\\
 & = & C_{\alpha}\|f\|_{1,\omega}.
\end{eqnarray*}
Similarly $\|\check{f}\|_{1,\omega}\leq C_{\alpha}\|f\|_{1,\omega^{o}}$
for all $f\in C_{c}(G^{o},A^{o})$.\end{proof}
It is now an easy matter to combine the ideas of Sections \ref{sec:Applications_L1_and_Beurling_algebras}
and \ref{sec:Other-types} with the above Proposition \ref{prop:anti-isomorphism-between-beurling-and-opposite-beurling}.

Let $X$ be a Banach space and let $(A,G,\alpha)$ be a Banach algebra
dynamical system, where $A$ has a bounded two-sided approximate identity
and $\alpha$ is uniformly bounded. As in Section \ref{sec:Other-types},
the pairs $(\pi,U)$, where $\pi:A\to B(X)$ is non-degenerate, bounded
and anti-multiplicative, $U:G\to B(X)$ is strongly continuous and
anti-multiplicative, and $U_{r}^{-1}\pi(a)U_{r}=\pi(\alpha_{r^{-1}}(a))$
for $a\in A$ and $r\in G$, can be identified with the pairs $(\pi^{o},U^{o})$,
where $\pi^{o}:A^{o}\to B(X)$, with $\pi^{o}(a):=\pi(a)$ for $a\in A$,
is non-degenerate, bounded and multiplicative, $U^{o}:G^{o}\to B(X)$,
with $U_{r}^{o}=U_{r}$ for all $r\in G^{o}$, is strongly continuous
and multiplicative, and $U_{r}^{o}\pi^{o}(a)U_{r}^{o-1}=\pi^{o}(\alpha_{r}^{o}(a))$
for $a\in A^{o}$ and $r\in G^{o}$. Furthermore, if $\omega$ is
a weight on $G$, also viewed as a weight $\omega^{o}:=\omega$ on
$G^{o}$, then there exists a constant $C_{U}$ such that $\|U_{r}\|\leq C_{U}\omega(r)$
for all $r\in G$ if and only if there exists a constant $C_{U^{o}}$
such that $\|U_{r}^{o}\|\leq C_{U^{o}}\omega^{o}(r)$ for all $r\in G^{o}$:
take the same constant. Now the collection of all such pairs $(\pi^{o},U^{o})$
is, in view of Theorem \ref{thm:continuous-non-deg-covars-are-R-continuous},
in natural bijection with the collection of all non-degenerate bounded
representations of $\OppBeurlingTypeAlg$ on $X$. As a consequence
of Proposition \ref{prop:anti-isomorphism-between-beurling-and-opposite-beurling},
this can in turn be viewed as the collection of all non-degenerate
bounded anti-representations of $\BeurlingTypeAlg$ on $X$. Combining
these three bijections, we can let pairs $(\pi,U)$ as described above
correspond bijectively to the non-degenerate bounded anti-representations
of $\BeurlingTypeAlg$ on $X$: If $(\pi,U)$ is such a pair, we associate
with it the non-degenerate bounded anti-representation of $\BeurlingTypeAlg$
determined by sending $f\in C_{c}(G,A)$ to $\pi^{o}\rtimes U^{o}(\hat{f})$.
Explicitly, for $f\in C_{c}(G,A)$, 
\begin{eqnarray*}
\pi^{o}\rtimes U^{o}(\hat{f}) & = & \int_{G^{o}}\pi^{o}(\hat{f}(r))U_{r}^{o}\, d\mu^{o}(r)\\
 & = & \int_{G^{o}}\pi(\Delta(r)\alpha_{r^{-1}}(f(r)))U_{r}\, d\mu^{o}(r)\\
 & = & \int_{G}\pi(\alpha_{r}(f(r^{-1})))U_{r^{-1}}\Delta(r^{-1})\, d\mu(r)\\
 & = & \int_{G}\pi(\alpha_{r^{-1}}(f(r)))U_{r}\, d\mu(r)\\
 & = & \int_{G}U_{r}U_{r}^{-1}\pi(\alpha_{r^{-1}}(f(r)))U_{r}\, d\mu(r)\\
 & = & \int_{G}U_{r}\pi(\alpha_{r}\circ\alpha_{r^{-1}}(f(r)))\, d\mu(r)\\
 & = & \int_{G}U_{r}\pi(f(r))\, d\mu(r).
\end{eqnarray*}

To retrieve $(\pi,U)$ from a given non-degenerate bounded anti-representation
$T$ of $\BeurlingTypeAlg$, we note that, by Proposition \ref{prop:anti-isomorphism-between-beurling-and-opposite-beurling},
$T\circ\check{\cdot}$ is a non-degenerate bounded representation
of $\OppBeurlingTypeAlg$, and hence, we can apply \cite[Equations (8.1) and (8.2)]{2011arXiv1104.5151D}
to $T\circ\check{\cdot}$. A bounded approximate left identity of
$A^{o}$ is then needed, and for this we take a bounded approximate
right identity $(u_{i})$ of $A$. Furthermore, if $V$ runs through
a neighbourhood base $\mathcal{Z}$ of $e\in G$, of which all elements
are contained in a fixed compact set of $G$, and $z_{V}\in C_{c}(G)$
is positive, supported in $V$, and $\int_{G}z_{V}(r^{-1})\, d\mu(r)=\int_{G^{o}}z_{V}(r)\, d\mu^{o}(r)=1$,
then the $z_{V}\in C_{c}(G)$ are as required for \cite[Equations (8.1) and (8.2)]{2011arXiv1104.5151D}.
Hence, again taking Remark \ref{rem:eq-8.1and8.2-can-be-simplified-by-deleting-alpha}
into account, we have, for $a\in A$,
\begin{eqnarray*}
\pi(a)=\pi^{o}(a) & = & \textup{SOT-lim}_{(V,i)}T((z_{V}\otimes a\circledcirc u_{i})^{\vee})\\
 & = & \textup{SOT-lim}_{(V,i)}T((z_{V}\otimes u_{i}a)^{\vee}),
\end{eqnarray*}
where $(z_{V}\otimes u_{i}a)^{\vee}(r)=\Delta(r^{-1})z_{V}(r)\alpha_{r}(au_{i})$
for $r\in G$, and, for $s\in G$,
\begin{eqnarray*}
U_{s}=U_{s}^{o} & = & \textup{SOT-lim}_{(V,i)}T((z_{V}(s^{-1}\circledcirc\cdot)\otimes u_{i})^{\vee})\\
 & = & \textup{SOT-lim}_{(V,i)}T((z_{V}(\cdot s^{-1})\otimes u_{i})^{\vee}),
\end{eqnarray*}
where $(z_{V}(\cdot s^{-1})\otimes u_{i})^{\vee}(r)=\Delta(r^{-1})z_{V}(rs^{-1})\alpha_{r}(u_{i})$
for $r\in G$.

All in all, we have the following result in analogy to Theorem \ref{thm:continuous-non-deg-covars-are-R-continuous}:
\begin{thm}
\label{thm:anti-correspondence}Let $(A,G,\alpha)$ be a Banach algebra
dynamical system where $A$ has a two-sided approximate identit\textup{y}
and $\alpha$ is uniformly bounded by a constant $C_{\alpha}$, and
let $\omega$ be a weight on $G$. Let $X$ be a Banach space. Let
the pair $(\pi,U)$ be such that $\pi:A\to B(X)$ is a non-degenerate
bounded anti-representation, $U:G\to B(X)$ is a strongly continuous
anti-representation satisfying $U_{r}\pi(\alpha)U_{r}^{-1}=\pi(\alpha_{r^{-1}}(a))$
for all $a\in A$ and $r\in G$, and with $C_{U}$ a constant such
that $\|U_{r}\|\leq C_{U}\omega(r)$ for all $r\in G$. Let $T:\BeurlingTypeAlg\to B(X)$
be a non-degenerate bounded anti-representation of $\BeurlingTypeAlg$
on $X$. Then the following maps are mutual inverses between all such
pairs $(\pi,U)$ and the non-degenerate bounded anti-representations
$T$ of $\BeurlingTypeAlg$: 
\[
(\pi,U)\mapsto\left(f\mapsto\int_{G}U_{r}\pi(f(r))\, dr\right)=:T^{(\pi,U)}\quad(f\in C_{c}(G,A)),
\]
determining a non-degenerate bounded anti-representation $T^{(\pi,U)}$
of $\BeurlingTypeAlg$, and, 
\[
T\mapsto\left(\begin{array}{l}
a\mapsto\textup{SOT-lim}_{(V,i)}T((z_{V}\otimes u_{i}a)^{\vee}),\\
s\mapsto\textup{SOT-lim}_{(V,i)}T((z_{V}(\cdot s^{-1})\otimes u_{i})^{\vee})
\end{array}\right)=:(\pi^{T},U^{T}),
\]
where $\mathcal{Z}$ is a neighbourhood base of $e\in G$, of which
all elements are contained in a fixed compact subset of $G$, $z_{V}\in C_{c}(G)$
is chosen such that $z_{V}\geq0$, supported in $V\in\mathcal{Z}$,
$\int_{G}z_{V}(r^{-1})\, dr=1$, and $(u_{i})$ is any bounded approximate
right identity of $A$.

Furthermore, if $A$ has an $M$-bounded approximate right identity,
then the following bounds for $T^{(\pi,U)}$ and $(\pi^{T},U^{T})$
hold:
\begin{enumerate}
\item $\|T^{(\pi,U)}\|\leq C_{U}\|\pi\|$,
\item \textup{$\|\pi^{T}\|\leq\left(\inf_{V\in\mathcal{Z}}\sup_{r\in V}\omega(r)\right)\|T\|$,}
\item $\|U_{s}^{T}\|\leq M\left(\inf_{V\in\mathcal{Z}}\sup_{r\in V}\omega(r)\right)\|T\|\,\omega(s)\quad(s\in G)$.
\end{enumerate}
\end{thm}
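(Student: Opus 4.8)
The plan is to observe that the asserted bijection, together with the two displayed reconstruction formulas, has already been assembled in the discussion preceding the statement, so that only the three norm estimates genuinely remain. Concretely, the correspondence is the composite of three bijections: first, the identification from Section~\ref{sec:Other-types} of non-degenerate continuous pairs $(\pi,U)$ of type $(a,a)$ for $(A,G,\alpha)$ with non-degenerate continuous covariant pairs $(\pi^o,U^o)$ of type $(m,m)$ for $(A^o,G^o,\alpha^o)$; second, Theorem~\ref{thm:continuous-non-deg-covars-are-R-continuous} applied to the companion system $(A^o,G^o,\alpha^o)$, matching such pairs with the non-degenerate bounded representations of $\OppBeurlingTypeAlg$; and third, the topological anti-isomorphism $\hat{\cdot}$ of Proposition~\ref{prop:anti-isomorphism-between-beurling-and-opposite-beurling}, which turns those representations into non-degenerate bounded anti-representations of $\BeurlingTypeAlg$ (using $(f*_\alpha g)^{\wedge}=\hat g*_{\alpha^o}\hat f$ from Lemma~\ref{lem:hat-is-anti-isomorphism} to see anti-multiplicativity). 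Chasing $f\in C_c(G,A)$ through this composite reproduces exactly $T^{(\pi,U)}(f)=\int_G U_r\pi(f(r))\,dr$, while the reconstruction formulas of Theorem~\ref{thm:continuous-non-deg-covars-are-R-continuous}, read through $\check{\cdot}$ and simplified via Remark~\ref{rem:eq-8.1and8.2-can-be-simplified-by-deleting-alpha}, give $(\pi^T,U^T)$. I would therefore open the proof by declaring the bijection and the two formulas established, and devote the body to bounds (1)--(3).

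Bound (1) I would dispatch first and directly: from $T^{(\pi,U)}(f)=\int_G U_r\pi(f(r))\,dr$ one gets $\|T^{(\pi,U)}(f)\|\le\int_G\|U_r\|\,\|\pi\|\,\|f(r)\|\,dr\le C_U\|\pi\|\int_G\omega(r)\|f(r)\|\,dr=C_U\|\pi\|\,\|f\|_{1,\omega}$, so $\|T^{(\pi,U)}\|\le C_U\|\pi\|$; this is verbatim the argument for part~(1) of Theorem~\ref{thm:continuous-non-deg-covars-are-R-continuous}.

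For (2) and (3) the plan is to imitate the corresponding parts of the proof of Theorem~\ref{thm:continuous-non-deg-covars-are-R-continuous}, estimating the weighted norms of the reconstruction elements $(z_V\otimes u_i a)^{\vee}$ and $(z_V(\cdot\, s^{-1})\otimes u_i)^{\vee}$, whose explicit pointwise formulas were recorded just before the theorem. The decisive arithmetic fact is that the normalization $\int_G z_V(r^{-1})\,dr=1$ makes $\Delta(r^{-1})z_V(r)\,dr$ a probability measure supported in $V$, so that e.g.\ $\|(z_V\otimes u_i a)^{\vee}\|_{1,\omega}=\int_G\Delta(r^{-1})z_V(r)\|\alpha_r(au_i)\|\,\omega(r)\,dr\le\bigl(\sup_{r\in V}\|\alpha_r(au_i)\|\bigr)\bigl(\sup_{r\in V}\omega(r)\bigr)$. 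For (3) one first performs the modular change of variable $r\mapsto rs$, after which the factor $\Delta(s)$ from $d\mu(ts)=\Delta(s)\,d\mu(t)$ cancels the $\Delta(s^{-1})$ produced by $\Delta(r^{-1})$, and submultiplicativity $\omega(ts)\le\omega(t)\omega(s)$ together with $\|u_i\|\le M$ extracts precisely the factors $M$ and $\omega(s)$. In both cases I would then pass to the net limit over $(V,i)$ as in the companion proof: choosing $V_0$ with $\sup_{r\in V_0}\omega(r)$ near $\inf_V\sup_{r\in V}\omega(r)$ and $i_0$ with $\|au_i\|$ near $\|a\|$.

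The hard part, and the reason the direct computation is needed rather than a naive transfer through $\check{\cdot}$, is that the reconstruction elements here carry an interior factor $\alpha_r$ (absent in Theorem~\ref{thm:continuous-non-deg-covars-are-R-continuous}, since there $\alpha$ sits only in the convolution), and the crude bound $\|\alpha_r(au_i)\|\le C_\alpha\|au_i\|$ would introduce a spurious constant $C_\alpha$ that is not present in the asserted estimates. For (2) I would remove it by exploiting that $au_i\to a$ in norm, writing $\|\alpha_r(au_i)-au_i\|\le\|\alpha_r(au_i-a)\|+\|\alpha_r(a)-a\|+\|a-au_i\|\le(C+1)\|au_i-a\|+\|\alpha_r(a)-a\|$ on a fixed compact neighbourhood (where $\|\alpha_r\|\le C$); since the first term is controlled uniformly in $r$ by largeness of $i$ and the second by strong continuity of $\alpha$ at $e$ for the \emph{fixed} element $a$, the supremum $\sup_{r\in V_0,\,i\ge i_0}\|\alpha_r(au_i)\|$ is made $\le\|a\|+\varepsilon$, so no $C_\alpha$ survives and the mass concentrates near $e$ where all factors are near $1$. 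The analogous, more delicate bookkeeping with $\Delta$, the Haar measure of $G^o$, and the $\alpha_r$ factor near $s$ is where I would be most careful in establishing (3).
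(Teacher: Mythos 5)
Your overall structure matches the paper's: the bijection and the two reconstruction formulas are indeed assembled in the discussion preceding the theorem (type $(a,a)$ reinterpreted as type $(m,m)$ for $(A^{o},G^{o},\alpha^{o})$, Theorem \ref{thm:continuous-non-deg-covars-are-R-continuous} applied to that system, and the anti-isomorphism $\hat{\cdot}\,$), so the proof proper only has to supply the three bounds. Bound (1) is exactly the paper's argument. For bound (2) you take a genuinely different route from the paper: the paper bounds $\|\alpha_{r}\|\leq1+\varepsilon_{3}$ for $r$ in a small neighbourhood of $e$ and then uses $\|\alpha_{r}(au_{i})\|\leq\|\alpha_{r}\|\,\|au_{i}\|$, whereas you control $\|\alpha_{r}(au_{i})\|$ directly from $au_{i}\to a$ together with strong continuity of $\alpha$ at $e$ applied to the \emph{fixed} element $a$. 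Both devices remove the unwanted factor $C_{\alpha}$, and yours has the merit of relying only on strong continuity; (2) is fine.

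The genuine gap is in bound (3). After your change of variables $r\mapsto ts$ the integrand contains $\|\alpha_{ts}(u_{i})\|$ with $t\in V$ near $e$ but $s\in G$ arbitrary, so the only estimate available along the route you describe is $\|\alpha_{ts}(u_{i})\|\leq C_{\alpha}\|u_{i}\|\leq C_{\alpha}M$; the step ``together with $\|u_{i}\|\leq M$ extracts precisely the factors $M$ and $\omega(s)$'' therefore yields $C_{\alpha}M$ rather than the asserted $M$. Your device from (2) does not transfer, because here $\alpha$ is applied to the varying elements $u_{i}$ (which need not converge to anything) at points near $s$ rather than near $e$. The missing idea is the one the paper uses: since the SOT-limit defining $U_{s}^{T}$ is independent of the choice of bounded approximate right identity (cf.\,Remark \ref{rem:eq-8.1and8.2-can-be-simplified-by-deleting-alpha}), one may replace $(u_{i})$ by the translated net $(\alpha_{s^{-1}}(u_{i}))$, which is again a bounded approximate right identity of $A$; the integrand then involves $\alpha_{ts}(\alpha_{s^{-1}}(u_{i}))=\alpha_{t}(u_{i})$ with $t\in V$ near $e$, where the norm of $\alpha$ can be taken close to $1$, and the clean factor $M\,\omega(s)$ falls out. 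Without this substitution the computation you sketch does not give (3) as stated.
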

\begin{proof}
Except for the bounds, all statements were proven in the discussion
preceding the statement of the theorem. Establishing the bound (1)
proceeds as in Theorem \ref{thm:continuous-non-deg-covars-are-R-continuous}. 

To establish (2), we choose a bounded two-sided approximate identity
$(u_{i})$ of $A.$ Let $a\in A$ and $\varepsilon_{1},\varepsilon_{2},\varepsilon_{3}>0$
be arbitrary. There exists an index $i_{0}$ such that $\|u_{i}a\|\leq\|a\|+\varepsilon_{1}$
for all $i\geq i_{0}$. There exists some $W_{1}\in\mathcal{Z}$ such
that $\sup_{r\in W_{1}}\omega(r)\leq\inf_{V\in\mathcal{Z}}\sup_{r\in V}\omega(r)+\varepsilon_{2}$.
Since $r\mapsto\|\alpha_{r}\|$ is lower semicontinuous and $\|\alpha_{e}\|=1$,
there exists some $W_{2}\in\mathcal{Z}$ such that $\|\alpha_{r}\|\leq1+\varepsilon_{3}$
for all $r\in W_{2}$. Let $V_{0}\in\mathcal{Z}$ be such that $V_{0}\subseteq W_{1}\cap W_{2}$.
If $(V,i)\geq(V_{0},i_{0})$, then $V\subseteq V_{0}$ and $i\geq i_{0},$
hence
\begin{eqnarray*}
\|T((z_{V}\otimes u_{i}a)^{\vee})\| & \leq & \|T\|\left\Vert (z_{V}\otimes u_{i}a)^{\vee}\right\Vert _{1,\omega}\\
 & = & \|T\|\int_{G}\|(z_{V}\otimes u_{i}a)^{\vee}(r)\|\omega(r)\, dr\\
 & = & \|T\|\int_{G}\Delta(r^{-1})z_{V}(r)\|\alpha_{r}(au_{i})\|\omega(r)\, dr\\
 & \leq & \|T\|\|au_{i}\|(1+\varepsilon_{3})\left(\sup_{r\in V}\omega(r)\right)\int_{G}\Delta(r^{-1})z_{V}(r)\, dr\\
 & \leq & \|T\|(\|a\|+\varepsilon_{1})(1+\varepsilon_{3})\left(\sup_{r\in V_{0}}\omega(r)\right)\int_{G}z_{V}(r^{-1})\, dr\\
 & \leq & \|T\|(\|a\|+\varepsilon_{1})(1+\varepsilon_{3})\left(\inf_{V\in\mathcal{Z}}\sup_{r\in V}\omega(r)+\varepsilon_{2}\right).
\end{eqnarray*}
From this, the bound in (2) now follows as in the proof of Theorem
\ref{thm:continuous-non-deg-covars-are-R-continuous}.

As to (3), we fix $s\in G$. The operator $U_{s}^{T}=\textup{SOT-lim}_{(V,i)}T((z_{V}(\cdot s^{-1})\otimes u_{i})^{\vee})$
does not depend on the particular choice of the bounded approximate
right identity $(u_{i})$ (see Remark \ref{rem:eq-8.1and8.2-can-be-simplified-by-deleting-alpha}).
If $(u_{i})$ is an $M$-bounded approximate right identity of $A$,
then $(\alpha_{s^{-1}}(u_{i}))$ is also a bounded approximate right
identity of $A$, and therefore $U_{s}^{T}=\textup{SOT-lim}_{(V,i)}T((z_{V}(\cdot s^{-1})\otimes\alpha_{s^{-1}}(u_{i}))^{\vee})$.
Let $\varepsilon_{1},\varepsilon_{2}>0$ be arbitrary. Choose $W_{1}\in\mathcal{Z}$
such that $\|\alpha_{r}\|\leq1+\varepsilon_{1}$ for all $r\in W_{1}$,
and $W_{2}\in\mathcal{Z}$ such that $\sup_{r\in W_{2}}\omega(r)\leq\inf_{V\in\mathcal{Z}}\sup_{r\in V}\omega(r)+\varepsilon_{2}$.
Let $V_{0}\in\mathcal{Z}$ be such that $V_{0}\subseteq W_{1}\cap W_{2}$.
Fix some index $i_{0}$, then, for every $(V,i)\geq(V_{0},i_{0})$,
\begin{eqnarray*}
 &  & \|T((z_{V}(\cdot s^{-1})\otimes\alpha_{s^{-1}}(u_{i}))^{\vee})\| \\
 & \leq & \|T\|\left\Vert (z_{V}(\cdot s^{-1})\otimes\alpha_{s^{-1}}(u_{i}))^{\vee}\right\Vert _{1,\omega}\\
 & = & \|T\|\int_{G}\|(z_{V}(\cdot s^{-1})\otimes\alpha_{s^{-1}}(u_{i}))^{\vee}(r)\|\omega(r)\, dr\\
 & = & \|T\|\int_{G}\Delta(r^{-1})z_{V}(rs^{-1})\|\alpha_{rs^{-1}}(u_{i})\|\omega(r)\, dr\\
 & = & \|T\|\int_{G}z_{V}(r^{-1}s^{-1})\|\alpha_{r^{-1}s^{-1}}(u_{i})\|\omega(r^{-1})\, dr\\
 & = & \|T\|\int_{G}z_{V}(r^{-1})\|\alpha_{r^{-1}}(u_{i})\|\omega(r^{-1}s)\, dr\\
 & \leq & \|T\|\int_{G}z_{V}(r^{-1})\|\alpha_{r^{-1}}(u_{i})\|\omega(r^{-1})\omega(s)\, dr\\
 & \leq & \|T\|\int_{G}z_{V}(r^{-1})(1+\varepsilon_{1})\|u_{i}\|\left(\sup_{r\in V^{-1}}\omega(r^{-1})\right)\omega(s)\, dr\\
 & \leq & \|T\|(1+\varepsilon_{1})M\left(\sup_{r\in V^{-1}}\omega(r^{-1})\right)\omega(s)\int_{G}z_{V}(r^{-1})\, dr\\
 & \leq & \|T\|(1+\varepsilon_{1})M\left(\sup_{r\in V_{0}}\omega(r)\right)\omega(s)\\
 & \leq & \|T\|(1+\varepsilon_{1})M\left(\inf_{V\in\mathcal{Z}}\sup_{r\in V}\omega(r)+\varepsilon_{2}\right)\omega(s).
\end{eqnarray*}
Once again, the bound in (3) now follows as in the proof of Theorem
\ref{thm:continuous-non-deg-covars-are-R-continuous}.\end{proof}
We will now describe the non-degenerate bimodules over generalized
Beurling algebras as a special case of a more general result. Let
$(A,G,\alpha)$ and $(B,H,\beta)$ be Banach algebra dynamical systems,
where $A$ and $B$ have bounded two-sided approximate identities,
and both $\alpha$ and $\beta$ are uniformly bounded. Let $\omega$
be a weight on $G$, and $\eta$ a weight on $H$. Remembering that
$\BeurlingTypeAlg$ and $\BBeurlingTypeAlg$ are themselves also (isomorphic
to) a crossed product of a Banach algebra dynamical system, Theorem
\ref{thm:faithful-representation}, it is now easy to describe the
non-degenerate simultaneously left $\BeurlingTypeAlg$-- and right
$\BBeurlingTypeAlg$-modules, as follows: Let $X$ be a Banach space.
Suppose that $T^{m}:\BeurlingTypeAlg\to B(X)$ is a non-degenerate
bounded representation of $\BeurlingTypeAlg$ on $X$, and $T^{a}:\BBeurlingTypeAlg\to B(X)$
is a non-degenerate bounded anti-representation, such that $T^{m}$
and $T^{a}$ commute. We know from Theorem \ref{thm:continuous-non-deg-covars-are-R-continuous}
and Theorem \ref{thm:anti-correspondence} that $T^{m}$ and $T^{a}$
correspond to pairs $(\pi^{m},U^{m})$ and $(\pi^{a},U^{a})$, respectively,
each with the appropriate properties. But then $(\pi^{m},U^{m})$
and $(\pi^{a},U^{a})$ must also commute in the sense of Definition
\ref{def:commuting-maps}. Indeed, $(\pi^{a},U^{a})$ corresponds
to $T^{a}$ as being the pair such that the integrated form of $(\pi^{a,o},U^{a,o})$
gives rise to the non-degenerate bounded representation $T^{a}$ of
$\OppBBeurlingTypeAlg$ on $X$. But since $\OppBBeurlingTypeAlg$
is (isomorphic to) a crossed product, and likewise for $\BeurlingTypeAlg$,
the fact that $(\pi^{m},U^{m})$ and $(\pi^{a,o},U^{a,o})$ commute
then follows from Lemma \ref{commuting-covars-and-integrated-forms}
and the fact that $T^{m}$ and $T^{a}$ commute. Since $\pi^{a,o}=\pi^{a}$
and $U^{a,o}=U^{a}$ as set-theoretic maps, $(\pi^{m},U^{m})$ and
$(\pi^{a},U^{a})$ also commute. The same kind of arguments show that
the converse is equally true. 

Combining these results, we obtain the following following description
of the non-degenerate simultaneously left $\BeurlingTypeAlg$-- and
right $\BBeurlingTypeAlg$-modules. When $(A,G,\alpha)=(B,G,\beta)$
and $\omega=\eta$ it describes the non-degenerate $\BeurlingTypeAlg$-bimodules.
\begin{thm}
\label{thm:Beurling-bimodules}Let $(A,G,\alpha)$ and $(B,H,\beta)$
be a Banach algebra dynamical systems, where $A$ and $B$ have bounded
two-sided approximate identities, and both $\alpha$ and $\beta$
are uniformly bounded. Let $\omega$ be a weight on $G$, and $\eta$
a weight on $H$. Let $X$ be a Banach space.

Suppose that $(\pi^{m},U^{m})$ is a non-degenerate continuous covariant
representation of $(A,G,\alpha)$ on $X$ such that $\|U_{r}^{m}\|\leq C_{U^{m}}\omega(r)$
for some constant $C_{U^{m}}$ and all $r\in G$. Suppose that the
pair $(\pi^{a},U^{a})$ is such that $\pi^{a}:B\to B(X)$ is a non-degenerate
bounded anti-representation, that $U^{a}:H\to B(X)$ is a strongly
continuous anti-representation, such that $U_{s}^{a}\pi^{a}(b)U_{s}^{a-1}=\pi^{a}(\alpha_{s^{-1}}(b))$
for all $b\in B$ and $s\in H$, and $\|U_{s}^{a}\|\leq C_{U^{a}}\eta(s)$
for some constant $C_{U^{a}}$ and all $s\in H$. Furthermore, let
$(\pi^{m},U^{m})$ and $(\pi^{a},U^{a})$ commute.

Then the map 
\[
T^{m}(f):=\int_{G}\pi^{m}(f(r))U_{r}^{m}\, d\mu_{G}(r)\quad(f\in C_{c}(G,A))
\]
determines a non-degenerate bounded representation of $\BeurlingTypeAlg$
on $X$, and the map 
\[
T^{a}(g):=\int_{H}U_{s}^{a}\pi^{a}(g(s))\, d\mu_{H}(s)\quad(g\in C_{c}(H,B))
\]
determines a non-degenerate bounded anti-representation of $\BBeurlingTypeAlg$
on $X$. Moreover, $T^{m}:\BeurlingTypeAlg\to B(X)$ and $T^{a}:\BBeurlingTypeAlg\to B(X)$
commute.

All pairs $(T^{m},T^{a})$, where $T^{m}$ and $T^{a}$ commute, are
non-degenerate, bounded, $T^{m}$ is a representation of $\BeurlingTypeAlg$
on $X$, and $T^{a}$ is an anti-representation of $\BBeurlingTypeAlg$
on $X$, are obtained in this fashion from unique\textup{ (}necessarily
commuting\textup{)} pairs $(\pi^{m},U^{m})$ and $(\pi^{a},U^{a})$
with the above properties. 
\end{thm}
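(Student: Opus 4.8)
The plan is to present Theorem~\ref{thm:Beurling-bimodules} as the amalgamation of two one-sided correspondences that have already been established, together with a dictionary between the two notions of commutativity. The bijection $(\pi^m,U^m)\mapsto T^m$ between non-degenerate continuous covariant representations of $(A,G,\alpha)$ with $\|U_r^m\|\leq C_{U^m}\omega(r)$ and non-degenerate bounded representations of $\BeurlingTypeAlg$ is exactly Theorem~\ref{thm:continuous-non-deg-covars-are-R-continuous}, while the bijection $(\pi^a,U^a)\mapsto T^a$ between pairs of type $(a,a)$ on $(B,H,\beta)$ with $\|U_s^a\|\leq C_{U^a}\eta(s)$ and non-degenerate bounded anti-representations of $\BBeurlingTypeAlg$ is exactly Theorem~\ref{thm:anti-correspondence}. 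Thus the only genuinely new content is that these two bijections carry the condition ``$(\pi^m,U^m)$ and $(\pi^a,U^a)$ commute'' precisely onto the condition ``$T^m$ and $T^a$ commute''; once this equivalence is in hand, the asserted correspondence, boundedness, non-degeneracy, and uniqueness all follow from the two cited theorems.

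First I would reduce the anti-representation $T^a$ to a genuine representation. By Proposition~\ref{prop:anti-isomorphism-between-beurling-and-opposite-beurling} the map $\check{\cdot}$ is a topological anti-isomorphism of $\OppBBeurlingTypeAlg$ onto $\BBeurlingTypeAlg$, so $T^{a,o}:=T^a\circ\check{\cdot}$ is a non-degenerate bounded \emph{representation} of $\OppBBeurlingTypeAlg$; by construction, on $C_c(H^o,B^o)$ it equals $\pi^{a,o}\rtimes U^{a,o}$, with covariant data $(\pi^{a,o},U^{a,o})=(\pi^a,U^a)$ viewed as maps into $B(X)$. Since $\check{\cdot}$ is a linear bijection of the algebra onto itself, the operator ranges satisfy $T^{a,o}(\OppBBeurlingTypeAlg)=T^a(\BBeurlingTypeAlg)$, whence $T^m$ commutes with $T^a$ if and only if $T^m$ commutes with $T^{a,o}$.

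Now both $\BeurlingTypeAlg$ and $\OppBBeurlingTypeAlg$ are, by Theorem~\ref{thm:Choosing-R-correctly-Crossed-Products-are-beurling}, topologically isomorphic to crossed products $\crossedprod$ for the classes $\mathcal{R}_1:=\{(\tilde{\lambda},\Lambda)\}$ and $\mathcal{R}_2:=\{(\tilde{\lambda},\Lambda)\}$ of the respective systems $(A,G,\alpha)$ and $(B^o,H^o,\beta^o)$; under these identifications $T^m=(\pi^m\rtimes U^m)^{\mathcal{R}_1}$ and $T^{a,o}=(\pi^{a,o}\rtimes U^{a,o})^{\mathcal{R}_2}$. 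The weight bounds $\|U_r^m\|\leq C_{U^m}\omega(r)$ and $\|U_s^{a,o}\|\leq C_{U^a}\eta(s)$ are exactly what guarantees, as in the discussion preceding Theorem~\ref{thm:continuous-non-deg-covars-are-R-continuous}, that $(\pi^m,U^m)$ and $(\pi^{a,o},U^{a,o})$ are $\mathcal{R}_1$- and $\mathcal{R}_2$-continuous, so the equivalence of~(1) and~(3) in Lemma~\ref{commuting-covars-and-integrated-forms}---applied with the two systems $(A,G,\alpha)$ and $(B^o,H^o,\beta^o)$---yields that $T^m$ and $T^{a,o}$ commute if and only if $(\pi^m,U^m)$ and $(\pi^{a,o},U^{a,o})$ commute. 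Finally, because $\pi^{a,o}=\pi^a$ and $U^{a,o}=U^a$ as set-theoretic maps into $B(X)$, commutativity of $(\pi^m,U^m)$ with $(\pi^{a,o},U^{a,o})$ coincides with commutativity of $(\pi^m,U^m)$ with $(\pi^a,U^a)$ in the sense of Definition~\ref{def:commuting-maps}, closing the loop.

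The step I expect to require the most care is not any single computation but the bookkeeping around the opposite system: one must keep straight that passing to $(B^o,H^o,\beta^o)$ converts the anti-multiplicativity of $\pi^a$ and $U^a$ into multiplicativity of $\pi^{a,o}$ and $U^{a,o}$ while leaving the underlying operators on $X$ unchanged, and that the weight $\eta^o=\eta$ together with the modular-function factors appearing in Proposition~\ref{prop:anti-isomorphism-between-beurling-and-opposite-beurling} play no role in commutativity, which concerns only the ranges of the representations. Once this is organized, the forward direction (commuting pairs produce commuting $(T^m,T^a)$), the reverse direction, and the uniqueness of the recovered pairs are all immediate consequences of the injectivity built into Theorems~\ref{thm:continuous-non-deg-covars-are-R-continuous} and~\ref{thm:anti-correspondence}.
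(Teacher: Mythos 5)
Your proposal is correct and follows essentially the same route as the paper: the paper likewise obtains the theorem by combining Theorem \ref{thm:continuous-non-deg-covars-are-R-continuous} and Theorem \ref{thm:anti-correspondence}, and then transfers the commutativity condition by viewing $T^{a}$ as a representation of the opposite generalized Beurling algebra (itself isomorphic to a crossed product), invoking Lemma \ref{commuting-covars-and-integrated-forms}, and noting that $(\pi^{a,o},U^{a,o})=(\pi^{a},U^{a})$ as set-theoretic maps. Your explicit remark that the operator ranges of $T^{a}$ and $T^{a,o}$ coincide is a slightly more careful justification of a step the paper leaves implicit, but the argument is the same.
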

For reasons of space, we do not repeat the formulas in Theorem \ref{thm:continuous-non-deg-covars-are-R-continuous}
and Theorem \ref{thm:anti-correspondence} retrieving $(\pi^{m},U^{m})$
from $T^{m}$ and $(\pi^{a},U^{a})$ from $T^{a}$, or the upper bounds
therein.
\begin{rem}
\label{rem:applying-sun-tensor-to-beurling-bimodules}The results
of Section \ref{sec:Other-types} make it possible to establish a
bijection between the commuting pairs $(\pi^{m},U^{m})$ and $(\pi^{a},U^{a})$
as in Theorem \ref{thm:Beurling-bimodules} and the non-degenerate
bounded representations of one single algebra (rather than two). To
see this, note that, though $\BeurlingTypeAlg$ and $\OppBBeurlingTypeAlg$
are not Banach algebras in general, the continuity of the multiplication
still implies that $\BeurlingTypeAlg\hat{\otimes}\OppBBeurlingTypeAlg$
can be supplied with the structure of an associative algebra such
that multiplication is continuous. If $\BeurlingTypeAlg\simeq C_{1}$
and $\OppBBeurlingTypeAlg\simeq C_{2}$ as topological algebras, where
$C_{1}$ and $C_{2}$ are crossed products of the relevant Banach
algebra dynamical systems as in Section \ref{sec:Applications_L1_and_Beurling_algebras},
then clearly
\[
\BeurlingTypeAlg\hat{\otimes}\BBeurlingTypeAlg^{o}\simeq\BeurlingTypeAlg\hat{\otimes}\OppBBeurlingTypeAlg\simeq C_{1}\hat{\otimes}C_{2}
\]
where Proposition \ref{thm:anti-correspondence} was used in the first
step. From Theorem \ref{thm:sun-product-of-tensor-crossed-products}
we know what the non-degenerate bounded representations of $C_{1}\hat{\otimes}C_{2}$
are. Hence, combining all information, we see that the commuting pairs
$(\pi^{m},U^{m})$ and $(\pi^{a},U^{a})$ as in Theorem \ref{thm:Beurling-bimodules}
are in bijection with the non-degenerate bounded representations of
$\BeurlingTypeAlg\hat{\otimes}\BBeurlingTypeAlg^{o}$, by letting
$(\pi^{m},U^{m})$ and $(\pi^{a},U^{a})$ correspond to the non-degenerate
bounded representation $T^{m}\odot T^{a}$, where $T^{m}$ and $T^{a}$
are as in Theorem \ref{thm:Beurling-bimodules} (the latter now viewed
as a non-degenerate bounded representation of $\BBeurlingTypeAlg^{o}$).
Our notation is slightly imprecise here, since $\BeurlingTypeAlg$
and $\BBeurlingTypeAlg^{o}$ are not Banach algebras in general, but
it is easily seen that Lemma \ref{prop:sun-products-are-unique} is
equally valid when the norm need not be submultiplicative, but multiplication
is still continuous.
\end{rem}
To conclude, we note that the special case where $(A,G,\alpha)=(B,H,\beta)=(\mathbb{K},G,\textup{triv})$
in Theorem \ref{thm:Beurling-bimodules} states that the non-degenerate
bimodules over $L^{1}(G,\omega)$ correspond naturally to the $G$-bimodules
determined by a pair $(U^{m},U^{a})$ of commuting maps $U^{m}$ and
$U^{a}$, where $U^{m}:G\to B(X)$ is a strongly continuous representation,
$U^{a}:G\to B(X)$ is a strongly continuous anti-representation, and
$\|U_{r}^{m}\|\leq C_{U^{m}}\omega(r)$ and $\|U_{r}^{a}\|\leq C_{U^{a}}\omega(r)$
for some constants $C_{U^{m}}$ and $C_{U^{a}}$ and all $r\in G$.
Specializing further by taking $\omega=1$, we see that the non-degenerate
bimodules over $L^{1}(G)$ correspond naturally to the $G$-bimodules
determined by a commuting pair $(U^{m},U^{a})$ as above, with now
each of $U^{m}$ and $U^{a}$ uniformly bounded. This is a classical
result, cf.\,\cite[Proposition 2.1]{Johnson}.

\section*{Acknowledgements}

Messerschmidt's research was supported by a Vrije Competitie grant
of the Netherlands Organisation for Scientific Research (NWO).

\bibliographystyle{amsplain}
\bibliography{bibliography}

\end{document}